\theoremstyle{plain}
\newtheorem{thm}{Theorem}[section]
\newtheorem{prop}[thm]{Proposition}
\newtheorem{lem}[thm]{Lemma}
\newtheorem*{thm*}{Theorem}
\theoremstyle{definition}
\newtheorem{defi}[thm]{Definition}
\newtheorem*{NaC}{Notation and Convention}
\newtheorem*{ACK}{Acknowledgement}
\theoremstyle{remark}
\numberwithin{equation}{subsection}
\newcommand{\Z}{\mathbb{Z}}
\newcommand{\Q}{\mathbb{Q}}
\newcommand{\C}{\mathbb{C}}
\renewcommand{\P}{\mathbb{P}}
\newcommand{\F}{\mathbb{F}}
\renewcommand{\a}{\alpha}
\renewcommand{\b}{\beta}
\newcommand{\D}{\Delta}
\newcommand{\e}{\varepsilon}
\newcommand{\g}{\gamma}
\newcommand{\G}{\Gamma}
\renewcommand{\L}{\Lambda}
\newcommand{\s}{\sigma}
\renewcommand{\t}{\tau}
\newcommand{\vp}{\varphi}
\newcommand{\mc}{\mathcal}
\newcommand{\tb}{\textbf}
\newcommand{\bbox}{\hfill \rule[-2pt]{5pt}{10pt}}
\newcommand{\ol}{\overline}
\newcommand{\wt}{\widetilde}
\renewcommand{\sb}{\subset}
\renewcommand{\sp}{\supset}
\newcommand{\hra}{\hookrightarrow}
\newcommand{\dra}{\dashrightarrow}
\DeclareMathOperator{\rk}{rk}
\DeclareMathOperator{\id}{id}
\DeclareMathOperator{\Sym}{\mathrm{Sym}}
\DeclareMathOperator{\Proj}{Proj}
\DeclareMathOperator{\NS}{NS}
\DeclareMathOperator{\Exc}{Exc}
\DeclareMathOperator{\Pic}{\mathrm{Pic}}
\DeclareMathOperator{\eu}{\chi_{\mathrm{Top}}}
\DeclareMathOperator{\Bl}{\mathrm{Bl}}
\DeclareMathOperator{\codim}{\mathrm{codim}}
\DeclareMathOperator{\Bs}{\mathrm{Bs}}
\DeclareMathOperator{\pr}{pr}
\newcommand{\bhline}[1]{\noalign{\hrule height #1}} 
\title[{\tiny On almost Fano 3-folds with del Pezzo fibrations.}]{On the existence of almost Fano threefolds with del Pezzo fibrations}
\author[T. FUKUOKA]{Takeru Fukuoka}
\date{\today}
\address{Graduate School of Mathematical Sciences\\The University of Tokyo\\3-8-1 Komaba\\Meguro-ku, Tokyo 153-8914, Japan}
\email{tfukuoka@ms.u-tokyo.ac.jp}
\subjclass[2010]{Primary: 14J45, 14J30; Secondary: 14E30.}
\keywords{Fano threefold, weak Fano threefold, del Pezzo fibration}
\begin{document}
\maketitle
\begin{abstract}
By Jahnke-Peternell-Radloff and Takeuchi, almost Fano threefolds with del Pezzo fibrations were classified. 
Among them, there exists 10 classes such that the existence of members of these was not proved. 
In this paper, we construct such examples belonging to each of 10 classes.
\end{abstract}
\tableofcontents
\setcounter{section}{0}
\section{Introduction}
Throughout this paper, we work over the field of complex numbers $\C$. 
Let $X$ be a non-singular projective three dimensional variety. 
If $-K_{X}$ is ample, $X$ is called a \textit{Fano} threefold. 
As a weaker condition than Fano, we consider a non-singular projective threefold with a nef and big anti-canonical divisor. 
This variety is called a \textit{weak Fano} threefold.  
If $-K_{X}$ is not ample but nef and big, we call $X$ an \textit{almost Fano} threefold. 

The classification of Fano threefolds is very important and many people studied it. 
Roughly speaking, we may regard almost Fano threefolds as ``degenerated'' Fano threefolds. 
In particular, the classification of almost Fano threefolds is related to that of singular Fano threefolds. 
For example, some Gorenstein terminal Fano threefolds have almost Fano threefolds as their small resolutions. 
Therefore, the classification of almost Fano threefolds is also an important problem. 

To classify (almost) Fano threefolds, 
we are interested in possible tuples of values of some invariants. 
As examples of invariants, we consider
the \textit{Picard rank} $\rho(X):=\dim_{\Q} (\NS(X) \otimes_{\Z}\Q)$, 
the \textit{anti-canonical degree} $(-K_{X})^{3}$, 
the \textit{Hodge number} $h^{1,2}(X):=\dim H^{2}(X,\Omega_{X})$, 
and types of contractions of extremal rays of $X$. 
For each fixed tuple of values, 
we call the set 
\[\{X \mid \text{ the tuple of invariants of } X \text{ is equal to the fixed one }\} \]
the \textit{class} corresponding to the fixed tuple. 
To classify (almost) Fano threefolds, we need to reveal whether each class has a member or not. 

The classification of Fano threefolds is given by the papers : 
\cite{Isk77,Isk78}, 
\cite{Fuj80,Fuj81,Fuj84}, \cite{Sho79a}, \cite{Sho79b},\cite{Take89}, \cite{Muk95}, \cite{MM81,MM86,MM03}, and so on. 
In particular, they classified the possible tuples of values of some invariants and proved that each class has a member. 
\cite{Fanobook} is a survey of the classification of Fano varieties including these results. 

The classification of almost Fano threefolds $X$ with the minimum Picard rank, that is, $\rho(X) = 2$ has been most intensively studied. Here we recall some known results. 
By virtue of Mori theory, $X$ has the contraction of the $K_{X}$-negative ray $\vp \colon X \to W$ and the contraction of the $K_{X}$-trivial ray $\psi \colon X \to \ol{X}
$. 
Then $\psi$ contracts a divisor or finitely many curves. 
In the latter case, let $\chi \colon X \dra X^{+}$ denote the flop of $\psi$ \cite{Kol89}. 
Thus we have two cases as the following diagrams. 
\[\xymatrix{
\text{Case (A)}&&X \ar[rd]^{\psi\text{ : divisorial }} \ar[ld]_{\vp}& \\
&W&&\ol{X}.
}\]
\[\xymatrix{
\text{Case (B)}&&X \ar[rd]^{\psi} \ar[ld]_{\vp} \ar@{-->}[rr]^{\chi \text{ : flop }}&&X^{+}\ar[rd]^{\vp^{+}} \ar[ld]_{\psi^{+}}& \\
&W&&\ol{X}&&V.
}\] 
\begin{itemize}
\item Jahnke-Peternell-Radloff treated Case (A) in \cite{JPR05}. 
They narrowed down tuples of values of invariants. 
The existence of a member of each class was proved except for the two classes
appearing in TABLE \ref{table:A} below. 
\item In \cite{JPR11} (resp. \cite{CM13}), Jahnke-Peternell-Radloff (resp. Cutrone-Marshburn) treated Case (B) when $\vp$ or $\vp^{+}$ is not a divisorial contraction (resp. $\vp$ and $\vp^{+}$ are divisorial contraction ). 
They narrowed down tuples of values of invariants. The existence of a member of each class was proved except for some classes. 
\item In \cite{Take09}, Takeuchi treated Case (B) when $\vp$ is a del Pezzo fibration of degree $d \neq 6$. 
He classified tuples of values of invariants and prove the existence of members of each class. Takeuchi's works and Jahnke-Peternell-Radloff's works are independent each other. 
\item In \cite{Vol01}, Vologodsky treated Case (B) when $\vp$ and $\vp^{+}$ are del Pezzo fibrations. 
He classified the possible values of $(-K_{X})^{3}$. 
Unfortunately, due to mistakes in \cite[Proposition 2.2]{Vol01}, 
there are some missing values in his classification. 
\cite{Take09}, \cite{JPR11} and this paper fill the whole missing values of anti-canonical degrees by constructing examples. 
\end{itemize}

In this paper, we mainly consider the case where $\vp$ is a del Pezzo fibration. 
In this case, it is known that $W=\P^{1}$. 
By summarizing the above known results, there exists 10 classes such that it is yet to be known whether these have members or not. 
Our main theorem is to show the existence of members of each class. 
\begin{thm}\label{Mainthm}
Each class belonging to the following TABLE \ref{table:A} and \ref{table:B} has a member. 
In particular, there exists examples of each almost Fano threefold with a del Pezzo fibration that appears in \cite{JPR05} and \cite{JPR11}. 
\end{thm}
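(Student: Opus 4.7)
The plan is a case-by-case construction, producing one explicit example for each of the $10$ classes listed in Tables~\ref{table:A} and \ref{table:B}. For each class, the strategy is to realize the desired almost Fano threefold $X$ as the output of a concrete birational modification of a familiar threefold, and then to verify that the constructed $X$ has the prescribed Picard rank, anti-canonical degree, Hodge number $h^{1,2}$, and extremal contraction types on the two sides of $\varphi$ and $\psi$.

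The most uniform approach is to start from a well-understood threefold $Y$ --- typically a Fano threefold of small index, a smooth hypersurface or complete intersection in a (weighted) projective space, or a suitable projective bundle over $\P^{1}$ --- and to form $X$ either as the blow-up $\pi\colon X \to Y$ along a carefully chosen smooth center $C$, or, in some Case~(B) situations, as a small resolution of a mildly nodal threefold. The center $C$ must be chosen so that $-K_{X} = \pi^{*}(-K_{Y}) - E$ is nef and big on $X = \mathrm{Bl}_{C}\,Y$; this reduces to checking explicit positivity inequalities between $-K_{Y}$ and the class of $C$, which are verified by intersection-number computations once the numerical class of $C$ is fixed. The del Pezzo fibration $\varphi\colon X \to \P^{1}$ is then exhibited by identifying a base-point-free pencil on $X$ --- typically the strict transform of a pencil in $|mH_{Y} - kC|$ for suitable $(m,k)$ --- whose general member is a smooth del Pezzo surface of the predicted degree.

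The invariants are checked as follows. The Picard rank $\rho(X)=2$ and the value of $(-K_{X})^{3}$ follow from the standard blow-up formulas applied to intersection theory on $Y$. The Hodge number $h^{1,2}(X)$ is computed either by the blow-up formula $h^{1,2}(X) = h^{1,2}(Y) + g(C)$, or, in Case~(B), via invariance of $h^{1,2}$ under flops together with the contribution of each ordinary double point to a small resolution of $\ol{X}$. The type of the second contraction $\psi$ is read off from the behaviour of strict transforms of curves on $Y$ meeting $C$ in specific ways: divisorial in Case~(A), and small (with explicitly located flopping curves) in Case~(B).

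The principal obstacle is the \emph{existence} of centers $C$ (or nodal models) with all the required properties simultaneously: for sufficiently general choices, one has to guarantee that $X$ is smooth, that $-K_{X}$ is nef and big but not ample (so that $X$ is genuinely almost Fano rather than Fano), and that the candidate pencil actually defines a morphism to $\P^{1}$ whose general fiber is a smooth del Pezzo surface of the expected degree. This is handled class by class via dimension counts, Bertini-type arguments, and --- in the Case~(B) entries --- a careful local analysis of the flopping curves to confirm that each is a smooth rational curve with normal bundle $\mc{O}(-1)\oplus\mc{O}(-1)$, so that the flop $\chi\colon X \dra X^{+}$ predicted by the class actually exists and completes the required Sarkisov-type diagram.
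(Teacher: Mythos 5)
Your plan works, essentially as the paper does, only for the three classes (B-i-$*$): there one takes $X^{+}=\Bl_{C}V$ for a Fano threefold $V$ ($B(4)$, $V(10)$, $V(9)$) and an elliptic curve $C$ of degree $6$, checks $-K_{X^{+}}$ is nef and big, and flops to reach the del Pezzo fibration. But even in those cases the crux is not Bertini or a dimension count: for $V$ a Mukai threefold of genus $9$ or $10$ the paper must first \emph{manufacture} the pair $(V,C)$, and it does so by prescribing a rank-two even lattice of signature $(1,1)$, realizing it as $\Pic(S)$ of a K3 surface (Morrison's theorem, Theorem \ref{thm:NSL}), using Saint-Donat's criteria to get very ampleness and a smooth elliptic curve $C$ with the right degree, and invoking Mukai's Brill--Noether-general criterion to embed $S$ as an anticanonical member of $V$. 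Your proposal contains no mechanism for producing such curves; this existence question is precisely why these classes were left open in \cite{JPR11} and \cite{CM13}.

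For the remaining seven classes your template $X=\Bl_{C}Y$ cannot apply at all: by the definition of these classes, the two extremal contractions of $X$ are the del Pezzo fibration $\vp$ and either a crepant divisorial contraction (Case (A)) or a flopping contraction whose other side $\vp^{+}$ is again of fiber type (Cases (B-ii), (B-iii)). Hence $X$ (and $X^{+}$) admits no $K$-negative divisorial contraction and is never the blow-up of a smooth threefold along a smooth curve; the formula $-K_{X}=\pi^{\ast}(-K_{Y})-E$ you rely on would force a $K$-negative divisorial ray that these classes do not have. The paper's key idea, missing from your proposal, is the relative two-ray game of Propositions \ref{prop:1} and \ref{prop:2}: starting from a quadric fibration (resp.\ $\P^{2}$-bundle) $W\to\P^{1}$ with a smooth trisection (resp.\ $5$-section) $B$ such that $-K_{\Bl_{B}W}$ is relatively nef and big, one flops and then contracts a divisor onto a \emph{section} $C$ of a new fibration $X\to\P^{1}$, which is shown by intersection-theoretic elimination to be a del Pezzo fibration of degree $6$ (resp.\ $5$); $X$ is reached by blowing \emph{down}, not up. The construction is thereby reduced to producing pairs $(W,B)$, which again requires the K3-lattice machinery (now with lattices of rank $3$ or $4$) to control nefness of $-K_{Z}$ and the degree of $\pi|_{B}$. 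Your fallback of taking small resolutions of a nodal threefold is a conceivable alternative for Case (B), but you give no way to construct the required non-factorial singular Fano models $\ol{X}$, and it does not address Case (A) at all; as written, the proposal does not yield a proof for these seven classes.
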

\begin{table}[h]
\caption{\tb{Case (A)}}
\begin{tabular}{|c||c|c|c|c|c|c|}
\hline
Name&$\vp$ : $dP_{d}$& Type of $\psi \colon X \to \ol{X}$ &$(-K_{X})^{3}$& $D_{X}=\Exc(\psi)$ & $h^{1,2}(X)$ & $\exists$ \\
\hline \hline
(A-1)&$6$&$(g,d)=(1,6)$&$12$ & $(-K_{X})-F$ &``2''& $\S$ \ref{ss:A1} \\ 
\hline
(A-2)&$5$&$(g,d)=(1,5)$&$10$ & $(-K_{X})-2F$ &``6''& $\S$ \ref{ss:A2} \\ 
\hline
\end{tabular}
\label{table:A}
\caption{\tb{Case (B)}}
\begin{tabular}{|c||c|c|c|c|c|c|c|c|}
\hline
Name&$\vp$ : $dP_{d}$&$V$& $\vp^{+} \colon X^{+} \to V$ &$(-K_{X})^{3}$& $D_{X}$ & $h^{1,2}(X)$ & $\exists$ \\
 \hline \hline
(B-i-1)&6&$B(4)$& $(g,d)=(1,6)$ & $8$ & $3(-K_{X})-2F$ & 3 & $\S$ \ref{ss:Bi1} \\ 
\hline
(B-i-2)&6&$V(10)$& $(g,d)=(1,6)$ & $6$ & $2(-K_{X})-F$& 3 & $\S$ \ref{ss:Bi2}\\ 
\hline
(B-i-3)&6&$V(9)$& $(g,d)=(1,6)$ & $4$ & $3(-K_{X})-F$ & 4 & $\S$ \ref{ss:Bi3} \\ 
\bhline{2pt}
(B-ii)&6& $\P^{2}$&$\deg \text{(disc.)}=4$ & $14$ & $(-K_{X})-F$ & ``2'' & $\S$ \ref{ss:Bii} \\
\bhline{2pt}
(B-iii-1)& 6&$\P^{1}$&$dP_{6}$ & $12$ & $(-K_{X})-F$ & ``2'' & $\S$ \ref{ss:Biii1} \\
\hline
(B-iii-2)&6& $\P^{1}$&$dP_{6}$ & $6$ & $2(-K_{X})-F$ & ``4'' & $\S$ \ref{ss:Biii2} \\
\hline
(B-iii-3)&6&$\P^{1}$&$dP_{6}$ & $4$ & $3(-K_{X})-F$ & ``3'' & $\S$ \ref{ss:Biii3} \\
\hline
(B-iii-4)&6&$\P^{1}$&$dP_{6}$ & $2$ & $6(-K_{X})-F$ & ``5'' & $\S$ \ref{ss:Biii4} \\
\hline
\end{tabular}
\label{table:B}
\end{table}

\begin{center}
\tb{Notation for TABLE \ref{table:A}.}
\end{center}
\begin{itemize}
\item The second row from the left denotes the degree of the del Pezzo fibration $\vp \colon X \to \P^{1}$.
\item The third row from the left denotes the types of $\psi$. 
``$(g,d)$'' means that $\psi$ is the blowing-up along a non-singular curve $C$ of genus $g$ with $(-K_{\ol{X}}).C=d$. 
\item The fifth row from the left denotes the description of $D_{X}:=\Exc(\psi)$ in $\Pic(X)=\Z \cdot [-K_{X}] \oplus \Z \cdot [F]$. Here, $F$ is a general $\vp$-fiber. 
\item The second rows from the right denote the Hodge numbers $h^{1,2}(X)$ of members of a class. ``$n$'' means that there exists at least one member $X$ of the class such that $h^{1,2}(X)=n$. 
\item The rightmost rows denote subsections including proofs of the existence of a member. 
\end{itemize}
\begin{center}
\tb{Notation for TABLE \ref{table:B}.}
\end{center}
\begin{itemize}
\item The second row from the left denotes the degree of the del Pezzo fibration $\vp \colon X \to \P^{1}$. 
\item The third row from the left denotes the types of $V$. 
$B(m)$ (resp. $V(g)$) denotes a del Pezzo (resp. Mukai) threefold of degree $m$ (resp. genus $g$). 
\item The fourth row from the left denotes the types of $\vp^{+}$. 
\begin{itemize}
\item ``$(g,d)$'' means that $\vp^{+}$ is the blowing-up along a non-singular curve $C$ of genus $g$ with $-K_{V}.C=i_{V} \cdot d$. 
Here, $i_{V}$ denotes the Fano index of $V$. 
\item ``deg(disc.)=$4$'' means that $\vp^{+} \colon X^{+} \to V$ is a conic bundle and the degree of the discriminant divisor is 4. Note that we obtain $V=\P^{2}$ in this case. 
\item ``$dP_{6}$'' means that $\vp^{+} \colon X^{+} \to V$ is a del Pezzo fibration of degree $6$. In this case, we obtain $V=\P^{1}$. 
\end{itemize}
\item The fifth row from the left denotes the description of the divisor $D_{X}:=\chi^{-1}_{\ast}D_{X^{+}}$ in $\Pic(X)=\Z \cdot [-K_{X}] \oplus \Z \cdot[F]$. Here, $F$ is a general $\vp$-fiber and $D_{X^{+}}:=\Exc(\vp^{+})$ (resp. $\vp^{+\ast}\mc O_{\P^{2}}(1)$, $\vp^{+\ast}\mc O_{\P^{1}}(1)$) in Case (B-i) (resp. (B-ii),(B-iii)). 
\item The second rows from the right denote the Hodge numbers $h^{1,2}(X)$ of members of a class. ``$n$'' means that there exists at least one member $X$ of the class such that $h^{1,2}(X)=n$. 
The Hodge numbers $h^{1,2}(X)$ of an arbitrary member of classes in Case (B-i) are given in $\S$ \ref{ss:hodge}. 
\item The rightmost rows denote subsections including proofs of the existence of a member. 
\end{itemize}

We explain a sketch of proof of Theorem \ref{Mainthm}. 

In Case (B-i), the construction of a member of the class is reduced to the construction of a Fano threefold containing an elliptic curve of degree 6. 
In Case (B-i-1), we can construct a member by standard arguments. 
In Case (B-i-2) and (B-i-3), we use theory of N\'eron-Severi lattices of K3 surfaces. 
This strategy is the same one in \cite{CM13}. 


In Case (A-1), Case (B-ii) and Case (B-iii), 
the idea of our construction of a member of those classes based on an elementary birational transformation as follows. 
Let $\Q^{2} \sb \P^{3}$ be a smooth quadric surface and take general three points $p_{1},p_{2},p_{3}$ on $\Q^{2}$. 
Then the linear span of three points $p_{1},p_{2},p_{3}$ is a plane and the intersection of the plane and $\Q^{2}$ is a conic $C$. 
Let $\s \colon F \to \Q^{2}$ be the blowing-up of $\Q^{2}$ at $p_{1},p_{2},p_{3}$ and $\wt{C}$ the proper transform of $C$. 
Then $\wt{C}$ is $(-1)$-curve and hence we obtain the blowing down $\t \colon F \to S$ of $\wt{C}$. Note that $S$ is a del Pezzo surface of degree 6. The following proposition is a relativization of this birational transformation $\Q^{2} \gets F \to S$. 
See Proposition \ref{prop:1} for precise statement. 
\begin{prop}
Let $\pi \colon W \to \P^{1}$ be a quadric fibration 
and $B \sb W$ a smooth curve and 
$\t \colon Z:=\Bl_{B}W \to W$ the blowing-up along $B$. 
We assume the following condition for a pair $(\pi \colon W \to \P^{1},B)$.
\[
\left\{ \begin{array}{ll}
\deg (\pi|_{B} \colon B \to \P^{1})=3 \text{ and } \\
-K_{Z} \text{ is $p$-nef and $p$-big with } p := \pi \circ \t \colon Z \to \P^{1}. 
\end{array} \right. \]
Then there exists a birational map $\Phi \colon Z \dra Y$ over $\P^{1}$ and a birational morphism $\mu \colon Y \to X$ over $\P^{1}$. Moreover, the following holds.
\begin{itemize}
\item $\Phi$ is isomorphic in codimension 1. 
\item $\mu$ is the blowing-up along a $\vp$-section $C$. Here, $\vp \colon X \to \P^{1}$ is a structure morphism onto $\P^{1}$. 
\item $\vp \colon X \to \P^{1}$ is a del Pezzo fibration of degree $6$. 
\item $(-K_{X})^{3}=\frac{3(-K_{W})^{3}-16g_{B}-32}{4}$ and $-K_{X}.C=\frac{8 (-K_{W}).B-24g_{B}-(-K_{W})^{3}-32}{8}$. 
\end{itemize}
\end{prop}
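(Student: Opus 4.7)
The proof strategy is to relativize over $\P^1$ the classical birational transformation $\Q^2 \gets F \to S$ between a smooth quadric surface and the degree-$6$ del Pezzo surface. First I would construct the relative ``conic divisor'' $\Sigma \sb W$ whose restriction to each fiber $W_t \cong \Q^2$ is the unique $(1,1)$-curve through the three points $B \cap W_t$. Embedding $W$ in a $\P^3$-bundle $\P(\mc E) \to \P^1$, this amounts to taking $\Sigma$ to be the zero locus of a section of $\mc O_W(H) \otimes \pi^* \mc O(m)$ vanishing along $B$, where $H$ is the relative hyperplane class and $m$ is chosen so that $\pi_*\bigl(\mc O_W(H) \otimes \pi^*\mc O(m) \otimes \mc I_B\bigr)$ admits a nowhere-vanishing global section (a direct image computation, using $h^0(\Q^2,\mc O(1,1))=4$ versus three point conditions). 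Since $B \sb \Sigma$, the proper transform $\wt{\Sigma} := \tau^{-1}_*\Sigma$ on $Z$ equals $\tau^*\Sigma - E_\tau$, and its restriction to a general fiber $Z_t = \Bl_{3\tx{pts}}\Q^2$ is the proper transform of the conic $C_t$, a $(-1)$-curve on the degree-$5$ weak del Pezzo surface $Z_t$.

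Next, since $-K_Z$ is $p$-nef and $p$-big, the relative MMP on $K_Z$ over $\P^1$ applies. The family $\{\wt{\Sigma}_t\}$ spans an extremal ray of $\ol{\NE}(Z/\P^1)$ with $-K_Z \cdot \wt{\Sigma}_t = 1$. After possibly flopping finitely many $K_Z$-trivial extremal curves via a map $\Phi \colon Z \dra Y$ that is an isomorphism in codimension $1$, the strict transform of $\wt{\Sigma}$ becomes the exceptional divisor of a divisorial contraction $\mu \colon Y \to X$ over $\P^1$. A fiberwise analysis shows this exceptional divisor is a $\P^1$-bundle over a smooth rational curve $C \sb X$ meeting every fiber of the induced morphism $\vp \colon X \to \P^1$ transversely in one point, so $\mu$ is the blow-up of $X$ along the $\vp$-section $C$. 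Each fiber of $\vp$ is obtained from $Z_t$ by contracting the $(-1)$-curve $\wt{\Sigma}_t$, hence is a del Pezzo surface of degree $6$.

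Finally I would derive the numerical formulae from the blow-up formulae together with birational invariance. The blow-up formula for $\tau$ yields $(-K_Z)^3 = (-K_W)^3 - 2(-K_W) \cdot B + 2 g_B - 2$; the blow-up formula for $\mu$ along a smooth rational curve gives $(-K_Y)^3 = (-K_X)^3 - 2(-K_X) \cdot C - 2$; and $(-K_Z)^3 = (-K_Y)^3$ since $\Phi$ is iso in codimension $1$. This yields one linear relation between $(-K_X)^3$ and $(-K_X) \cdot C$. A second, independent relation is obtained by computing a mixed triple intersection on $Z$ such as $(-K_Z)^2 \cdot \wt{\Sigma}$, using $\wt{\Sigma} = \tau^*\Sigma - E_\tau$ together with the known class of $\Sigma$ in $\Pic(W)$, and comparing it with $(-K_Y)^2 \cdot E_\mu$ on $Y$ computed from the blow-up structure of $\mu$ (the Hirzebruch surface $E_\mu$ determined by $N_{C/X}$). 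Solving the resulting $2 \times 2$ linear system gives the claimed closed forms.

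The main obstacle is the relative MMP step: one must identify all $K_Z$-trivial extremal rays, verify that after the flop $\Phi$ the divisorial contraction of the strict transform of $\wt{\Sigma}$ produces a smooth $X$ with $\vp$ a genuine smooth del Pezzo fibration of degree $6$, and check that the image $C$ is indeed a smooth section. This requires careful fiberwise control particularly over the singular fibers of $\pi$, where $W_t$ degenerates to a singular quadric and the three points of $B \cap W_t$ could collide or lie on its singular locus.
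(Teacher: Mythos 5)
Your geometric picture is correct --- the paper's own conclusion $D \equiv \frac{1}{2}(-K_Y)-\frac{1}{2}E_Y+zF_Y$ confirms that $\Exc(\mu)$ restricts on a general fiber to the proper transform of the conic through the three points --- and your route (exhibiting the relative conic divisor $\Sigma$ and contracting its proper transform) is genuinely different from the paper's, which never constructs $\Exc(\mu)$ geometrically but identifies the second contraction purely by numerical constraints on a general fiber. However, the proposal assumes the two claims that carry the actual content of the proposition. First, you take for granted that the $K_Z$-trivial extremal contraction over $\P^{1}$ is small, so that the flop $\Phi$ exists and is an isomorphism in codimension $1$; this is precisely the first bullet of the statement and is not automatic, since a priori $\Psi \colon Z \to \ol{Z}$ could be a crepant \emph{divisorial} contraction, in which case there is no flop. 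The paper excludes this by showing a hypothetical crepant exceptional divisor restricts to a union of $n\in\{1,2\}$ disjoint $(-2)$-curves on a general fiber, writing its class as $x(-K_Z)+yE+zF_Z$ and deriving $x=\pm\sqrt{15n}/10\notin\Q$; some argument of this kind is indispensable.

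Second, the assertion that $\{\wt{\Sigma}_t\}$ spans an extremal ray of $\ol{\NE}(Z/\P^{1})$ is actually false whenever flopping curves exist: those curves are $K_Z$-trivial with $E\cdot\gamma>0$, while $(-K_Z\cdot\wt{\Sigma}_t,\,E\cdot\wt{\Sigma}_t)=(1,3)$, so $[\wt{\Sigma}_t]$ lies strictly between the $\t$-ray $(1,-1)$ and the flopping ray $(0,c)$ and is interior to the two-dimensional cone. What you need is extremality of this class in $\ol{\NE}(Y/\P^{1})$ \emph{after} the flop, together with the fact that the associated contraction is divisorial with exceptional divisor exactly $\Phi_{\ast}\wt{\Sigma}$; a priori the $K_Y$-negative contraction over $\P^{1}$ could instead be a conic bundle over a Hirzebruch surface, a divisorial contraction to a point, or the blow-up of a multisection of any degree $m\leq 4$. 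The paper rules all of these out via the rationality and integrality constraints on $(m,x,y)$, invoking Corti's lemma (Lemma 2.8) to kill the case $(m,x,y)=(3,0,1)$; your proposal needs an equivalent argument. Two further points are glossed over: the class of $\Sigma$ in $\Pic(W)$ is not ``known'' (computing the twist $m$ is a direct-image degree computation equivalent to determining $z$, and requires checking that $\pi_{\ast}(\mc O_W(H)\otimes \mc I_B)$ stays locally free of rank one where the three points of $B\cap W_t$ degenerate), and the fiberwise contraction argument must be controlled over singular quadric fibers and fibers with collinear or colliding points --- which the paper's numerical method avoids entirely by only ever computing on a general fiber.
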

This proposition is a variant of Takeuchi's 2-ray game \cite{Take89}. 
By using this proposition, the construction problem can be reduced to the construction of a pair $(\pi \colon W \to \P^{1},B)$ satisfying the condition above. 

As an example, assume that $W$ is the blowing-up of a quadric threefold $\Q^{3}$ along a conic $\G$ (resp. the blowing-up of $\P^{3}$ along an elliptic curve $\G$ of degree $4$) . 
Then the construction of $(\pi \colon W \to \P^{1},B)$ is reduced to the construction of a pair of curve $(B,\G)$ in $\Q^{3}$ (resp. $\P^{3}$), where $\G$ is a conic (resp. elliptic curve of degree $4$) and $B$ satisfies $\deg B=\#(B \cap \G)+3$ (resp. $2\deg B=\#(B \cap \G)+3$) . 
We construct such pair by using a theory of N\'eron-Severi lattices of K3 surfaces except for Case (B-iii-1) and Case (A-1). 
Actually, our constructions of a member of each classes use the quadric fibrations above except for Case (B-iii-3). 
In Case (B-iii-3), we use a $(2,2)$ divisor in $\P^{3} \times \P^{1}$ for $W$. 


In Case (A-2), our construction of a member is similarly based on the birational transformation of surfaces as follows.
We take general 5 points $p_{1},\ldots,p_{5}$ in $\P^{2}$ and consider the conic $C$ passing through $p_{1},\ldots,p_{5}$. Let $\s \colon F \to \P^{2}$ be the blowing-up at $p_{1},\ldots,p_{5}$ and $\wt{C}$ be the proper transform of $C$. 
Then $\wt{C}$ is a $(-1)$-curve and hence we obtain the blowing-down $\t \colon F \to S$ of $\wt{C}$ from $F$ onto a del Pezzo surface $S$ of degree $5$. 
We prove a relativization of this birational transformation (see Proposition \ref{prop:2}) and use it to construct an example belonging to Case (A-2). 
\begin{NaC}
We basically adopt the terminology of \cite{HarBook} and \cite{KMBook}. 
\begin{itemize}
\item For a closed subvariety $Y \sb X$, $N_{Y}X$ denotes the normal sheaf. 
\item For a birational morphism $\vp \colon X \to Y$ with relative Picard rank one, $\Exc(\vp)$ denotes the exceptional set of $\vp$. 
If $\codim_{X}\Exc(\vp)=1$, we call $\vp$ a \textit{divisorial} contraction. 
If $\codim_{X}\Exc(\vp) \geq 2$, we call $\vp$ a \textit{small} contraction. 
Assume that $X$ is non-singular projective variety. 
If $\vp$ is a divisorial contraction and $K_{X} \sim_{\vp} 0$, we call $\vp$ a \textit{crepant} contraction. 
If $\vp$ is a small contraction and $K_{X} \sim_{\vp} 0$, we call $\vp$ a \textit{flopping} contraction. 
\item A \textit{Mori fiber space} is a contraction $\vp \colon X \to S$ of a $K_{X}$-negative ray with non-singular projective variety $X$ and normal projective variety $S$ such that $\dim S<\dim X$. 
\item A \textit{del Pezzo fibration} is a Mori fiber space $\vp \colon X \to S$ with $\dim X=3$ and $\dim S=1$. The \textit{degree} of del Pezzo fibration $\vp \colon X \to S$ is a anti-canonical degree $(-K_{F})^{2}$ for a general $\vp$-fiber $F$. A del Pezzo fibration of degree 8 is called a \textit{quadric fibration}. 
\item A \textit{conic bundle} is a Mori fiber space $\vp \colon X \to S$ with $\dim X=3$ and $\dim S=2$. The \textit{discriminant divisor} $\D$ of conic bundle $\vp \colon X \to S$ is a divisor of $S$ given by $\D=\{x \in S \mid \vp^{-1}(x) \text{ is singular } \}$. Note that a conic bundle $\vp \colon X \to S$ is always flat and hence $S$ is non-singular. 
\item For a non-singular Fano variety $V$, $i_{V}:=\max\{ i \in \Z_{>0} \mid -K_{V}=i \cdot H \text{ for some Cartier divisor } H\}$ denotes the \textit{Fano index} of $V$. 
\item $\Q^{n}$ denotes a $n$-dimensional smooth quadric hypersurface. 
\item $B(m)$ is a \textit{del Pezzo threefold} of degree $m$, which means $V=B(m)$ is a Fano threefold with $i_{V}=2$ and $(-K_{V})^{3}=8m$.
\item $V(g)$ is a \textit{Mukai threefold} of genus $g$, which means $V=V(g)$ is a Fano threefold with $i_{V}=1$ and $(-K_{V})^{3}=2g-2$. 
\item For a locally free sheaf $\mc E$, we set $\P(\mc E):=\Proj \Sym^{\bullet}\mc E$ and $\mc O_{\P(\mc E)}(1)$ as the tautological bundle of $\P(\mc E)$. 
\item The $\P^{1}$-bundle $\P_{\P^{1}}(\mc O \oplus \mc O(n))$ over $\P^{1}$, a Hirzebruch surface, is denoted by $\F_{n}$. $\mc O_{\F_{n}}(1)$ denotes the tautological bundle of $\F_{n}=\P_{\P^{1}}(\mc O \oplus \mc O(n))$. 
\item We say that $\vp \colon X \to S$ is a \textit{$\P^{2}$-bundle} if $\vp$ is a projection morphism of a projective space bundle $\P(\mc E) \to S$ associated to some locally free sheaf $\mc E$ of rank 3 over $S$. 
\item For a birational map $f \colon X \dra Y$ and a closed subscheme $S \sb X$, $f_{\ast}S$ and $S_{Y}$ denotes the proper transform of $S$. 
\end{itemize}
\end{NaC}
\begin{ACK}
I am deeply grateful to Professor Hiromichi Takagi, my supervisor, for his valuable comments, suggestions and encouragement. 
I am also grateful to Professor Kiyohiko Takeuchi who showed me his unpublished works about the classification of weak Fano threefolds with Picard rank two. 
This paper is based on my master thesis at the Graduate School of Mathematical Sciences, The University of Tokyo. 
\end{ACK}
\section{Preliminaries}
\subsection{Divisors on K3 surfaces}\label{subsec:K3}
In this subsection, we review some theory for K3 surfaces. 
First, we recall a result of a theory of N\'eron-Severi lattices of K3 surfaces. 
We also recall the fact that the fundamental domain, for the Picard-Lefschetz reflection on the positive cone of a K3 surface, is the closure of the K\"ahler cone. 
We collect these results as follows. 
\begin{thm}[{\cite[Lemma~2.9]{Morr84}, \cite[Proposition~3.10]{BHPV}}]\label{thm:NSL}
The following hold.
\begin{enumerate}
\item Let $\rho$ be an integer with $1 \leq \rho \leq 10$ and $\L$ an even lattice with signature $(1,\rho-1)$. Then there exists a projective K3 surface $S$ and an isometry $i \colon \L \to \Pic(S)$. Here, the lattice structure of $\Pic(S)$ is given by whose intersection form. 
\item Let $H$ be a line bundle of a K3 surface $S$. If $H^{2}>0$, then there exists an isometry $i \colon \Pic(S) \to \Pic(S)$ such that $i(H)$ is nef and big. 
\end{enumerate}
\end{thm}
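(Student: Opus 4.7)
The plan is to handle the two parts separately; both are classical results from K3 theory, so my strategy is to reduce each to the surjectivity of the period map plus Picard–Lefschetz reflections, which I would cite rather than reprove.

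For (1), I would use the surjectivity of the period map for marked K3 surfaces together with a lattice-embedding argument. Recall that the full K3 lattice is $L_{K3}=U^{\oplus 3}\oplus E_{8}(-1)^{\oplus 2}$, an even unimodular lattice of signature $(3,19)$. Given an even lattice $\L$ of signature $(1,\rho-1)$ with $\rho\le 10$, I would first invoke Nikulin's theorem on primitive embeddings of even lattices to embed $\L$ primitively into $L_{K3}$; the bound $\rho\le 10$ is what makes the orthogonal complement $\L^{\perp}\sb L_{K3}$ have signature $(2,20-\rho)$ with a positive part of dimension at least $10$, providing enough room for the general period condition. Next, I would choose a period $\omega\in\L^{\perp}\otimes\C$ with $\omega^{2}=0$, $\omega\cdot\ol{\omega}>0$, and whose orthogonal in $L_{K3}$ is exactly $\L$ (a very general point of the period domain in $\L^{\perp}$). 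Surjectivity of the period map then yields a K3 surface $S$ whose marking identifies $\Pic(S)$ isometrically with $\L$, and because $\L$ contains a class of positive self-intersection, $S$ is projective.

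For (2), I would use the chamber decomposition of the positive cone of a K3 surface. Let $\mathcal{C}\sb\Pic(S)\otimes\R$ be the component of the cone $\{x\mid x^{2}>0\}$ containing an ample class. The hyperplanes $\d^{\perp}$ for $\d\in\Pic(S)$ with $\d^{2}=-2$ cut $\mathcal{C}$ into chambers, and the closure of the K\"ahler cone (= the nef-and-big cone in the projective case) is one such chamber; the Weyl group $W$ generated by the Picard–Lefschetz reflections $s_{\d}(x)=x+(x\cdot\d)\d$ consists of isometries of $\Pic(S)$ and acts simply transitively on the set of chambers. Given $H$ with $H^{2}>0$, after replacing $H$ by $-H$ if necessary (the map $x\mapsto-x$ is an isometry) one may assume $H\in\ol{\mathcal{C}}$, and then a suitable element $i\in W$ moves $H$ into the closure of the K\"ahler cone, producing the required isometry with $i(H)$ nef and big.

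The only subtlety worth flagging is making sure the period chosen in (1) is sufficiently generic that no extra $(-2)$-classes or $(0)$-classes appear in $\L^{\perp}$, so that $\Pic(S)$ is genuinely $\L$ and not a strictly larger lattice; this is standard and follows from taking $\omega$ outside a countable union of proper subspaces of the period domain. The main conceptual ingredients — Nikulin's embedding theorem, the Torelli/surjectivity package for K3 surfaces, and the Weyl-chamber description of the K\"ahler cone — are exactly the content of the two cited references (\cite[Lemma~2.9]{Morr84} and \cite[Proposition~3.10]{BHPV}), so in practice the proof amounts to citing these and noting that the hypotheses apply.
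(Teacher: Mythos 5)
The paper states this theorem as a background result, with the proof consisting entirely of the citations to \cite[Lemma~2.9]{Morr84} and \cite[Proposition~3.10]{BHPV}; your sketch correctly reconstructs the standard arguments behind those citations (Nikulin's primitive-embedding theorem plus surjectivity of the period map for (1), and the Weyl-chamber description of the positive cone for (2)), so there is nothing to compare beyond noting that you and the paper ultimately rest on the same references. One minor imprecision: the orthogonal complement $\L^{\perp}\sb L_{K3}$ has signature $(2,20-\rho)$, so its positive-definite part always has rank exactly $2$ (which is all the period construction needs); the hypothesis $\rho\leq 10$ is used to satisfy the rank condition in Nikulin's embedding theorem, not to make the positive part large.
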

We review some characterizations of very ampleness or base point freeness of nef line bundles of K3 surfaces. This subject is mainly treated by \cite{SD74}. 
\begin{thm}[{\cite{SD74},\cite{Rei88}}]\label{thm:SDR}
Let $L$ be a nef line bundle on a K3 surface $S$. Then the following hold. 
\begin{enumerate}
\item The following are equivalent. 
\begin{enumerate}
\item $L$ is globally generated. 
\item $|L|$ is fixed part free. 
\item There is no line bundle $D \in \Pic(S)$ such that $D^{2}=0$ and $L.D=1$. 
\end{enumerate}
\item Assume $L^{2} \geq 4$. Then the following are equivalent. 
\begin{enumerate}
\item $L$ is very ample. 
\item There is no line bundle $D \in \Pic(S)$ satisfying one of the following.
\begin{enumerate}
\item $D^{2}=-2$ and $L.D=0$.
\item $D^{2}=0$ and $L.D=1$ or $2$.
\item $D^{2}=2$ and $L \equiv 2D$. 
\end{enumerate}
\end{enumerate}
\item Assume $L$ is base point free. If $L^{2} > 0$, then there exists a non-singular curve $C$ of $|L|$. If $L^{2}=0$, then there exists an elliptic curve $C$ such that $nC \in |L|$ for some $n>0$. In particular, if $L^{2}=0$ and $L$ is a generator of the lattice $\Pic(S)$, there exists an elliptic curve $C$ such that $C \in |L|$.
\end{enumerate}
\end{thm}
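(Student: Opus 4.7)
The plan is to follow the classical analysis of linear systems on K3 surfaces originating in Saint-Donat's paper \cite{SD74}, leveraging Riemann--Roch, Serre duality, and the triviality of the canonical class $K_{S}\sim 0$. The master identity $\chi(L)=\frac{L^{2}}{2}+2$, combined with Kawamata--Viehweg vanishing when $L$ is nef and big, yields $h^{0}(L)=\frac{L^{2}}{2}+2$ and will be invoked repeatedly throughout.

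For part (1), the implication (a)$\Rightarrow$(b) is immediate. For (b)$\Rightarrow$(c), if some $D$ with $D^{2}=0$ and $L.D=1$ existed, Riemann--Roch would give $h^{0}(D)\geq 2$, producing an effective pencil whose general member is an elliptic curve $E$ with $L.E=1$; the unique section of $L|_{E}$ then propagates to a fixed component of $|L|$. For the main direction (c)$\Rightarrow$(a), I would decompose $|L|=|M|+\Phi$ into moving and fixed parts and apply Saint-Donat's dichotomy: the case $M^{2}>0$ is excluded by a restriction argument forcing $\Phi=0$, while $M^{2}=0$ makes $|M|$ an elliptic pencil and produces an $E$ with $L.E=1$, contradicting (c).

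For part (2), each listed condition obstructs very ampleness in an obvious manner: a $(-2)$-class orthogonal to $L$ is contracted by $\vp_{|L|}$; an elliptic class $D$ with $L.D=1$ already kills base-point-freeness by (1); an elliptic class with $L.D=2$ makes $\vp_{|L|}$ two-to-one on fibers; and $L\equiv 2D$ with $D^{2}=2$ forces $\vp_{|L|}$ to factor through the two-to-one Veronese map $\vp_{|D|}\colon S\to\P^{2}$. The converse, which is the delicate part, proceeds by first securing base-point-freeness via (1), and then verifying separation of points and of tangent vectors through a case analysis of effective classes of small self-intersection and small $L$-degree, exactly as carried out in \cite{SD74} and streamlined in \cite{Rei88}.

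For part (3), assuming $|L|$ is base-point-free, the morphism $\vp_{|L|}$ is defined. When $L^{2}>0$, Bertini in characteristic zero produces a smooth member $C\in|L|$, and irreducibility/connectedness follow from $h^{1}(\mc O_{S}(-L))=0$ via the sequence $0\to\mc O_{S}(-L)\to\mc O_{S}\to\mc O_{C}\to 0$. When $L^{2}=0$, the image of $\vp_{|L|}$ is a curve, so Stein factorization yields a fibration $S\to\P^{1}$ (the base is $\P^{1}$ by simple-connectedness of $S$) whose general fiber $F$ is smooth and connected with $F^{2}=0$ and $K_{S}.F=0$, hence elliptic by adjunction; then $nF\sim L$ for some $n>0$, and $n=1$ whenever $L$ primitively generates $\Pic(S)$. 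The principal obstacle is the case analysis underlying the converse of (2), which requires careful handling of reducible and non-primitive classes and is the technical heart of Saint-Donat's classification of projective models of K3 surfaces.
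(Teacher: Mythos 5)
The paper gives no proof of this theorem---it is quoted directly from Saint-Donat and Reider with citations---and your outline faithfully reproduces the standard argument of exactly those references (Riemann--Roch plus Serre duality for the numerical obstructions, Saint-Donat's moving/fixed-part dichotomy for part (1), his case analysis for the converse of (2), and Bertini plus Stein factorization for (3)). Since you explicitly defer the technical case analysis to \cite{SD74} and \cite{Rei88}, your proposal is in substance the same as the paper's treatment, namely an appeal to those sources, and there is no error in the portions you do write out.
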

We prepare two easy lemmas to prove Theorem \ref{Mainthm}. 
\begin{lem}\label{lem:1}
Let $L$ be an effective divisor on a K3 surface $S$ and $N$ be the fixed part of $|L|$. Then every irreducible component of $N$ is a $(-2)$-curve. 
Moreover, if $L^{2} \geq 0$, then $L-N \neq 0$. 
\end{lem}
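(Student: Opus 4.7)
The lemma has two parts: (i) every irreducible component of $N$ is a smooth rational curve with self-intersection $-2$, and (ii) whenever $L^2 \geq 0$, the movable part $L-N$ is non-zero. My plan is to handle each part by a short Riemann--Roch argument on the K3 surface $S$ (using $K_S = 0$), combined with adjunction and a swap trick for the fixed part.

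For part (i), let $C$ be an irreducible component of $N$ occurring with multiplicity $m \geq 1$. Adjunction on $S$ gives $C^2 = 2 p_a(C) - 2 \geq -2$, since the integral curve $C$ has $p_a(C) \geq 0$. I will derive a contradiction from the assumption $C^2 \geq 0$; this will force $C^2 = -2$, hence $p_a(C) = 0$, so $C \cong \P^1$ is a $(-2)$-curve. Assuming $C^2 \geq 0$, note that $h^2(\mathcal{O}_S(C)) = h^0(\mathcal{O}_S(-C)) = 0$ by Serre duality because $C$ is a non-zero effective divisor; Riemann--Roch then gives $h^0(\mathcal{O}_S(C)) \geq C^2/2 + 2 \geq 2$, so $\dim |C| \geq 1$ and we can pick $C' \in |C|$ with $C' \neq C$. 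Since $C$ is irreducible and $C' \sim C$, the divisor $C'$ cannot contain $C$ as a component, for otherwise $C' - C$ would be an effective divisor linearly equivalent to $0$, hence $C' = C$. Choose $D_0 \in |L|$ realising the minimal coefficient $m$ of $C$, and write $D_0 = mC + R$ with $R$ effective and not containing $C$. Then $D'_0 := mC' + R$ is an element of $|L|$ whose coefficient of $C$ is $0 < m$, contradicting the definition of $N$.

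For part (ii), suppose toward a contradiction that $L = N$. Then $|L|$ consists of a single divisor, so $h^0(\mathcal{O}_S(L)) = 1$. On the other hand, $L$ is non-zero and effective, so Serre duality gives $h^2(\mathcal{O}_S(L)) = h^0(\mathcal{O}_S(-L)) = 0$, and Riemann--Roch yields $h^0(\mathcal{O}_S(L)) \geq \chi(\mathcal{O}_S(L)) = L^2/2 + 2 \geq 2$ under the hypothesis $L^2 \geq 0$, a contradiction. The two points to verify with care are the existence of a distinct $C' \in |C|$ in the swap step of part (i), which is exactly where $C^2 \geq 0$ is used to guarantee $\dim |C| \geq 1$, and the vanishing $h^2(\mathcal{O}_S(L)) = 0$ in part (ii), which needs only that $L$ is a non-zero effective divisor. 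Neither of these involves real difficulty, so the lemma should follow essentially by direct computation once the swap trick is set up.
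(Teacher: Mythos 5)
Your proof is correct and follows essentially the same route as the paper: Riemann--Roch plus Serre duality shows that an irreducible component $C$ with $C^{2}\geq 0$ satisfies $h^{0}(\mathcal{O}_{S}(C))\geq 2$ and hence moves (your swap trick just makes explicit the paper's terse claim that a fixed component has $h^{0}=1$), and the second part is the identical Riemann--Roch count $h^{0}(L)\geq L^{2}/2+2\geq 2$. The only point worth flagging is the degenerate case $L=0$, which both you and the paper implicitly exclude.
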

\begin{proof}
Let $C$ be a irreducible component of $N$. 
Then the multiplication map 
$H^{0}(S,\mc O(L-C)) \otimes H^{0}(S,\mc O(C)) \to H^{0}(S,\mc O(L))$ 
is bijective and hence $h^{0}(S,\mc O_{S}(C))=1$. 
By the Serre duality and the Riemann-Roch theorem, 
we have $C^{2}<0$ and hence $C$ is $(-2)$-curve. 
If $L^{2} \geq 0$, we have $h^{0}(L) \geq 2$ by the Riemann-Roch theorem and hence $L \neq N$. 
\end{proof}
\begin{lem}\label{lem:2}
Let $S$ be a K3 surface and $H$ a very ample divisor with $H^{2}=6$. 
Then $S$ is a complete intersection of a quadric hypersurface $Q$ and a cubic hypersurface in $\P^{4}$. 
Moreover, $Q$ is smooth if there exists no effective divisors $C_{1},C_{2}$ such that 
$H=C_{1}+C_{2}$ , $C_{1}^{2}=C_{2}^{2}=0$ and $H.C_{1}=H.C_{2}=3$.
\end{lem}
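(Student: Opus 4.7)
The plan is to first produce the quadric $Q$ and the cubic $F$ cutting out $S$ via dimension counts on $S$ and on $\P^4$, and then to settle the smoothness criterion for $Q$ by analyzing the rank of its defining quadratic form.

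To begin, I would use Riemann--Roch and Kodaira vanishing on the K3 surface $S$ to compute $h^0(S,H)=5$, $h^0(S,2H)=14$ and $h^0(S,3H)=29$. Since $|H|$ is very ample, it embeds $S$ as a smooth non-degenerate surface of degree $6$ in $\P^4$. Because $h^0(\P^4,\mc O(2))=15>14$, there exists a quadric $Q\supset S$; and since $S$ is irreducible and non-degenerate, $Q$ cannot split off a hyperplane, so $Q$ is irreducible. The cubics through $S$ form a subspace of $H^0(\P^4,\mc O(3))$ of dimension at least $35-29=6$, while the cubics of the form $Q\cdot L$ sweep out only a $5$-dimensional subspace, so there is a cubic $F\supset S$ not divisible by $Q$. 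Since $Q$ is irreducible of degree $2$ and $Q\nmid F$, the hypersurfaces $Q$ and $F$ share no common component, and by B\'ezout $Q\cap F$ is pure of dimension $2$ and of degree $6$. Combined with $S\subseteq Q\cap F$ and $\deg S=6$, this forces $S=Q\cap F$.

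For the moreover part I would argue the contrapositive: if $Q$ is singular, then the required decomposition exists. Since $Q$ is an irreducible quadric in $\P^4$, $\rk Q\in\{3,4\}$. I would first rule out $\rk Q=3$ using the smoothness of $S$: then $\Sing(Q)$ is a line $L$, and at any $x\in L\cap F$ one has $T_xQ=\P^4$, so $T_xS=T_xF$ is $3$-dimensional, contradicting smoothness (one must check both the generic case $L\not\subset F$, in which $L\cap F$ is a length-$3$ scheme, and the degenerate case $L\subset F$, in which every point of $L\subset S$ is singular by the same tangent-space argument). Hence $\rk Q=4$: the quadric $Q$ is a cone with a single vertex $v$ over a smooth quadric surface $Q_0\cong\P^1\times\P^1\subset\P^3$, and $v\notin S$ (otherwise $v$ would be singular on $S$).

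Finally, projection from $v$ restricts to a morphism $S\to Q_0$, and composing with the two rulings of $Q_0$ produces two elliptic pencils $|C_1|, |C_2|$ on $S$. Each member $C_i$ is cut on $S$ by a plane $\Pi\subset Q$ in the $i$-th ruling, so $C_i$ is a plane cubic with $C_i^2=0$ (it is a fiber of a morphism to $\P^1$) and $H\cdot C_i=3$. A generic hyperplane through $v$ meets $Q$ in the union of two planes from different rulings, hence cuts $S$ in a divisor of class $C_1+C_2$, yielding $H\sim C_1+C_2$ in $\Pic(S)$. The only non-routine ingredient I expect to dwell on is the rank-$3$ exclusion above; the rest of the argument is a straightforward unraveling of the cone structure of a singular quadric.
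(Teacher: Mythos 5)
Your proof is correct and takes essentially the same route as the paper's: the complete-intersection statement via the standard dimension count (which the paper simply records as well known), and the smoothness criterion via the rank of the defining quadratic form --- rank $\geq 3$ from non-degeneracy, rank $3$ excluded because the vertex line must meet the ample divisor $S=Q\cap F$ in a point where $S$ would be singular, and rank $4$ producing $C_{1},C_{2}$ from the two rulings of the cone, exactly as in the paper's choice of the planes $P_{01},P_{02}$. One small correction to your last step: a \emph{generic} hyperplane through the vertex $v$ meets the rank-$4$ cone $Q$ in an irreducible quadric cone over a smooth conic, not in a union of two planes, so to obtain $H\sim C_{1}+C_{2}$ you should either use the special hyperplanes spanned by $v$ and one line from each ruling (as the paper does with $\{x_{0}=0\}$), or simply pull back the splitting $\mc O_{Q_{0}}(1,1)=\mc O_{Q_{0}}(1,0)\otimes\mc O_{Q_{0}}(0,1)$ along the projection $S\to Q_{0}$.
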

\begin{proof}
Fix a closed embedding $S \hra \P^{4}$ given by $|H|$. 
The former statement is well-known. 
Note that $Q$ is smooth along $S$ since $S$ is a non-singular Cartier divisor of $Q$. 
Let $q$ be a quadratic form defining $Q$. 
Since $S \hra \P^{4}$ is non-degenerate, we have $\rk q \geq 3$. 
If $\rk q=3$, then $Q$ is isomorphic to an weighted projective space $\P(1,1,2,2)$. 
But it contradicts that $Q$ is smooth along $S$. 
If $\rk q = 4$, then we have $Q=\{x_{0}x_{3}=x_{1}x_{2}\} \sb \P^{4}=\Proj \C[x_{0},\ldots,x_{4}]$. 
Set $P_{01}:=\{x_{0}=x_{1}=0\}$, $P_{02}:=\{x_{0}=x_{2}=0\}$, $H_{0}=\{x_{0}=0\}$ and $C_{i}:=P_{0i} \cap S$. 
Note that $C_{i}$ is a divisor of $S$ since $S$ does not pass the vertex of $Q$. 
Since $C_{i}$ is a cubic curve of $P_{0i}$, we have $C_{i}^{2}=0$, $H.C_{i}=3$ and $H=C_{1}+C_{2}$. 
It contradicts our assumption.
\end{proof}
At the end of this subsection, we summarize a part of Mukai's theory about K3 surfaces \cite{Muk95}. 
\begin{defi}
Let $(S,H)$ be a polarized K3 surface, that is a pair of a K3 surface $S$ and an ample divisor $H$. 
We say that $(S,H)$ is \textit{Brill-Noether general} 
if $H=L+N$ for $L,N \in \Pic(S)-\{0\}$, then $h^{0}(H)>h^{0}(L)h^{0}(N)$. 
\end{defi}
\begin{prop}[{\cite[Theorem~4.7]{Muk95}}]
Let $(S,H)$ be a Brill-Noether general polarized K3 surface. Assume that $H$ is very ample, $H^{2}=2g-2$ and $g \in \{7,8,9,10\}$. Then there exists a non-singular Mukai threefold $V$ of genus $g$ such that $V$ has $S$ as an anti-canonical member. 
\end{prop}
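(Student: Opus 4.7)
The plan is to realize both the given K3 surface $S$ and the sought threefold $V$ as transverse linear sections of a fixed ambient homogeneous Fano variety $M_{g} \sb \P^{N_{g}}$ depending only on $g$. For $g=7,8,9,10$ these are, respectively, the $10$-dimensional spinor variety in $\P^{15}$, the Grassmannian $\Gr(2,6) \sb \P^{14}$, the Lagrangian Grassmannian $LG(3,6) \sb \P^{13}$, and a $5$-dimensional $G_{2}$-homogeneous variety in $\P^{13}$. Each $M_{g}$ is Fano of coindex $3$ in its minimal projective embedding, so that a transverse linear section of $M_{g}$ of dimension $2$ (resp.\ $3$) is a K3 surface (resp.\ a Mukai threefold) of genus $g$, and the former is a hyperplane section of the latter.

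First I would construct a Mukai bundle on $S$: a rank-$2$ vector bundle $\mc E$ with $c_{1}(\mc E)=H$, with $c_{2}(\mc E)$ equal to a specific integer depending on $g$, satisfying $h^{0}(\mc E)=g+2$ (or the natural analogue dictated by $M_{g}$), globally generated, and $\mu$-stable. The Brill-Noether generality hypothesis enters precisely here: it rules out destabilizing subsheaves and jumps of $h^{0}$, thereby guaranteeing a bundle with the expected cohomological invariants.

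Next, the evaluation map $H^{0}(\mc E) \otimes \mc O_{S} \epm \mc E$ induces a morphism $f \colon S \to \Gr(2,g+2)$ whose image lies in the Pl\"ucker-type embedding of $M_{g}$. The key lemma to prove is that $f$ is a closed embedding and that its image is the transverse intersection of $M_{g}$ with a linear subspace $L$ of codimension $\dim M_{g}-2$. Once $S = M_{g} \cap L$ is established, I would pick a general linear subspace $L' \sp L$ of codimension $\dim M_{g}-3$; then $V := M_{g} \cap L'$ is a threefold containing $S$, and adjunction gives $-K_{V} = H_{M_{g}}|_{V}$ with $S$ a hyperplane section, so $S \in |-K_{V}|$.

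The hard step is the construction of $\mc E$ together with the proof that $f$ identifies $S$ with a transverse linear section of $M_{g}$: this is the heart of Mukai's classification and requires a delicate Brill-Noether analysis combined with an explicit case-by-case study of each of the four homogeneous varieties $M_{g}$. Once this is in place, the non-singularity of $V$ for generic $L' \sp L$ follows from a Bertini-type argument, since $M_{g}$ and $S$ are both smooth, and the locus that could obstruct smoothness of $M_{g} \cap L'$ along $S$ has positive codimension, so a generic $L'$ containing $L$ avoids it.
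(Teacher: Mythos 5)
The paper does not prove this proposition at all: it is quoted as \cite[Theorem~4.7]{Muk95} and used as a black box, so there is no internal proof to compare against. Your proposal is, in outline, Mukai's own strategy — realize $S$ as a transverse linear section of a homogeneous variety $M_{g}$ by means of a stable, globally generated vector bundle with $c_{1}=H$, then take $V$ to be a general linear section of $M_{g}$ of one dimension higher containing $S$. The final Bertini step is sound: smoothness of $V=M_{g}\cap L'$ along $S$ follows because $S$ is a smooth Cartier divisor of $V$, and away from $S$ the linear system of sections through $L$ has no base points in $M_{g}$, so a generic $L'$ works.

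As a proof, however, the proposal has a genuine gap, which you yourself flag: the existence of the Mukai bundle with the stated invariants, its stability and global generation, and the fact that the evaluation map embeds $S$ as a \emph{transverse} linear section of $M_{g}$ are precisely the content of Mukai's theorem, and none of this is carried out. Extracting these properties from Brill--Noether generality is a substantial case-by-case analysis (this is exactly where the hypothesis $g\in\{7,8,9,10\}$ and the condition $h^{0}(H)>h^{0}(L)h^{0}(N)$ are used), so as written the argument reduces the statement to itself. There is also a factual slip: the Mukai bundle does not have rank $2$ for all four genera — it has rank $5$ for $g=7$ (mapping to the spinor variety inside $\Gr(5,10)$) and rank $3$ for $g=9$ (mapping to the Lagrangian Grassmannian inside $\Gr(3,6)$); only for $g=8$ and $g=10$ is it of rank $2$, and correspondingly the target is not $\Gr(2,g+2)$ in general. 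Since the paper treats this statement as a citation, the appropriate course is to cite \cite{Muk95} for the hard step rather than to attempt to reprove it.
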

\subsection{Computations of $h^{1,2}$}\label{ss:hodge}
We can compute the Hodge numbers $h^{1,2}(X)$ of the members belonging to Case (B-i) by using the following well-known lemmas. 
\begin{lem}[{\cite[Corollary~4.12]{Kol89}}]\label{lem:3}
Let $X$ be a non-singular projective threefold and $\chi \colon X \dra X^{+}$ be a flop. Then, for all $i,j \in \Z_{\geq 0}$, $h^{i,j}(X)=h^{i,j}(X^{+})$. 
\end{lem}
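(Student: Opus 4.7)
The plan is to pass to a common resolution and reduce to a local analysis around the flopping curves. Write the flop as the composition of flopping contractions $\psi \colon X \to \ol{X}$ and $\psi^{+} \colon X^{+} \to \ol{X}$, each contracting a finite disjoint union of trees of rational curves to terminal Gorenstein singularities of $\ol{X}$. Resolving the normalized graph of $\chi$ yields a smooth projective threefold $Y$ with birational morphisms $p \colon Y \to X$ and $p^{+} \colon Y \to X^{+}$ satisfying $\psi \circ p = \psi^{+} \circ p^{+}$.

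First I would argue that the Betti numbers agree. Via a Mayer--Vietoris sequence for a tubular neighborhood of $\Exc(\psi)$ together with its complement $U := X \setminus \Exc(\psi)$, the rational cohomology of $X$ is reconstructed from $H^{\ast}(U,\Q)$ together with local topological data of a neighborhood of a configuration of $\P^{1}$'s with normal bundles of type $\mc O(a_{i}) \oplus \mc O(b_{i})$. Since $\chi$ canonically identifies $U$ with $X^{+} \setminus \Exc(\psi^{+})$, and the flop operation merely swaps $a_{i} \leftrightarrow b_{i}$ without altering the topology of the neighborhood, this yields $b_{k}(X) = b_{k}(X^{+})$ for all $k$.

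Next I would upgrade this to Hodge numbers, using that both $X$ and $X^{+}$ are smooth projective and hence carry pure Hodge structures. The pullbacks $p^{\ast}$ and $(p^{+})^{\ast}$ embed $H^{k}(X,\Q)$ and $H^{k}(X^{+},\Q)$ as pure sub-Hodge structures of $H^{k}(Y,\Q)$, the complements being spanned by fundamental classes of $p$- and $p^{+}$-exceptional divisors. The rational singularities of $\ol{X}$ give $R\psi_{\ast}\mc O_{X} = \mc O_{\ol{X}}$ and the analogous statement for $\psi^{+}$, which combined with Grauert--Riemenschneider vanishing identifies $h^{q}(X, \Omega^{p}_{X})$ with the corresponding invariant of $\ol{X}$, and hence of $X^{+}$, in the relevant range.

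The main obstacle is this last comparison: matching Hodge types, not merely Betti numbers, across the flop. The interchange of normal bundle data $(a_{i}, b_{i}) \leftrightarrow (b_{i}, a_{i})$ alters the complex structure transverse to each flopping curve, and one must verify that no cohomology class concentrated near the exceptional locus shifts its Hodge type $(p,q)$ under this analytic change. This is the analytic content of Kollár's argument in \cite{Kol89}, and it is where the global topological matching is promoted to an equality of each individual $h^{p,q}$.
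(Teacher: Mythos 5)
The paper gives no proof of this lemma at all --- it is quoted verbatim from \cite[Corollary~4.12]{Kol89} --- so the only question is whether your sketch actually closes, and it does not. The first gap is in the Betti-number step: the assertion that ``the flop operation merely swaps $a_{i}\leftrightarrow b_{i}$ without altering the topology of the neighborhood'' is not a proof, and is not even an accurate description of a general threefold flop (the flopping curves can have normal bundle $\mc O(-1)\oplus\mc O(-1)$, $\mc O\oplus\mc O(-2)$ or $\mc O(1)\oplus\mc O(-3)$, and outside the first case the flop is not a normal-bundle swap). Moreover, a Mayer--Vietoris comparison needs you to match not just the homeomorphism type of the two exceptional neighborhoods but the way each is glued to the common open piece $U$, which you never address. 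A correct route is to use that $\psi$ and $\psi^{+}$ are small morphisms onto $\ol{X}$, so that $H^{k}(X,\Q)\cong IH^{k}(\ol{X},\Q)\cong H^{k}(X^{+},\Q)$; Koll\'ar's own argument instead shows that $p^{\ast}H^{\ast}(X,\Z)$ and $(p^{+})^{\ast}H^{\ast}(X^{+},\Z)$ coincide inside $H^{\ast}(Y,\Z)$ for a common resolution $Y$ --- the one identity your third paragraph would need and never establishes.

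The second gap is that the promotion to Hodge numbers fails as written: rational singularities and Grauert--Riemenschneider control $R\psi_{\ast}\mc O_{X}$ and $R\psi_{\ast}\omega_{X}$ only, hence the numbers $h^{0,q}$ and $h^{3,q}$; they say nothing about $h^{1}(X,\Omega^{2}_{X})=h^{1,2}(X)$, which is exactly the invariant the paper uses this lemma to compute, and $\Omega^{1}$, $\Omega^{2}$ on the singular $\ol{X}$ do not enter the picture the way you suggest. The claim that the complement of $p^{\ast}H^{k}(X,\Q)$ in $H^{k}(Y,\Q)$ is spanned by exceptional divisor classes is also unjustified in general. Ironically, the ``main obstacle'' you single out at the end is not an obstacle in dimension three: once $b_{k}(X)=b_{k}(X^{+})$ is established, the equality of all Hodge numbers is automatic, because each $h^{p,0}$ is a birational invariant of smooth projective varieties and for a threefold one has $h^{1,1}=b_{2}-2h^{2,0}$ and $h^{1,2}=\tfrac{1}{2}\bigl(b_{3}-2h^{3,0}\bigr)$, the remaining $h^{p,q}$ being determined by Hodge symmetry and Serre duality. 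So the genuinely missing ingredient is a rigorous proof of the topological statement, not the passage from Betti numbers to Hodge types.
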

\begin{lem}\label{lem:4}
Let $V$ be a smooth projective variety and $B \sb V$ a smooth closed subvariety with $\codim_{V}(B) \geq 2$. Let $\wt{V} \to V$ be the blowing-up along $B$. Then we obtain that 
\[b_{i}(\wt{V})=b_{i}(V)+\sum_{j=1}^{\codim_{V}B-1} b_{i-2j}(B), \text{ where } b_{i}(X):=\dim H^{i}(X,\Q).\]
In particular, $h^{1,2}(\wt{V})=h^{1,2}(V)+h^{0,1}(B)$ holds.
\end{lem}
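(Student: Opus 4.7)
The plan is to deduce the Betti-number identity from the classical blow-up formula for cohomology, and then extract the Hodge-number statement by invoking the same formula in its Hodge-theoretic refinement.

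First I would set $c := \codim_{V} B$ and let $E \subset \widetilde{V}$ denote the exceptional divisor of the blowing-up $\pi \colon \widetilde{V} \to V$. Since $B$ is smooth and $V$ is smooth, $E = \P(N_{B/V})$ is a $\P^{c-1}$-bundle over $B$, and $\pi|_{E} \colon E \to B$ is the bundle projection. The key input is the classical decomposition
\[
H^{i}(\widetilde{V},\Q) \;\cong\; H^{i}(V,\Q) \oplus \bigoplus_{j=1}^{c-1} H^{i-2j}(B,\Q),
\]
valid whenever $V$ and $B$ are smooth and projective and $B \subset V$ has codimension $\geq 2$. One standard way to derive this is to combine the relative cohomology exact sequence of the pair $(\widetilde{V}, E)$ with that of $(V,B)$, together with the projective bundle formula for $H^{*}(E,\Q) = H^{*}(B,\Q) \otimes H^{*}(\P^{c-1},\Q)$; the class of the tautological line bundle on $E$ supplies the generators of the ``extra'' summands, while $\pi^{*}$ embeds $H^{*}(V,\Q)$ into $H^{*}(\widetilde{V},\Q)$ as a direct summand. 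Taking dimensions gives the formula for $b_{i}(\widetilde{V})$.

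For the final claim on $h^{1,2}$, the same decomposition holds in the Hodge-theoretic category: the isomorphism above restricts to an isomorphism
\[
H^{p,q}(\widetilde{V}) \;\cong\; H^{p,q}(V) \oplus \bigoplus_{j=1}^{c-1} H^{p-j,q-j}(B),
\]
since $\pi^{*}$ and the Gysin-type contributions coming from powers of $c_{1}(\mc O_{E}(1))$ are morphisms of Hodge structures of the indicated bidegrees. Setting $(p,q)=(1,2)$, only the index $j=1$ can satisfy $p-j \geq 0$ and $q-j \geq 0$, and this term contributes $h^{0,1}(B)$. All higher $j$ contribute zero, so
\[
h^{1,2}(\widetilde{V}) \;=\; h^{1,2}(V) + h^{0,1}(B).
\]

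There is no real obstacle here: the entire lemma is a direct consequence of the blow-up formula applied to a smooth center in a smooth ambient variety, and the Hodge refinement is equally standard. The only point worth being careful about is keeping track of the range of $j$ in the Hodge formula when specializing to the bidegree $(1,2)$, so that only the $h^{0,1}(B)$ term survives.
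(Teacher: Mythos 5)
Your proof is correct: the Betti-number identity follows from the classical blow-up decomposition of cohomology, and the Hodge-theoretic refinement (with the summand $H^{i-2j}(B)$ entering via a Tate twist of type $(j,j)$) immediately yields $h^{1,2}(\wt{V})=h^{1,2}(V)+h^{0,1}(B)$ since only $j=1$ survives in bidegree $(1,2)$. The paper states this lemma as well known and gives no proof at all, so there is nothing to compare against; your argument is precisely the standard one the paper implicitly relies on.
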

The Hodge number $h^{1,2}$ of a Fano threefold with Picard rank 1 is well-known \cite{Fanobook}. 
\begin{lem}\label{lem:h12ofFano}
We have $h^{1,2}(B(4))=2$, $h^{1,2}(B(5))=0$, $h^{1,2}(V(9))=3$ and $h^{1,2}(V(10))=2$. 
\end{lem}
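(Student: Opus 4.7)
My plan is to invoke the classification of smooth Fano threefolds of Picard rank one \cite{Fanobook}, where all four Hodge numbers appear as entries in the standard tables, and to sketch the short geometric verification available in each case. The uniform tool I would use is that for any smooth Fano threefold $V$ of Picard rank one, Kodaira vanishing together with Hodge symmetry gives $b_{0}(V) = b_{2}(V) = b_{4}(V) = b_{6}(V) = 1$, $b_{1}(V) = b_{5}(V) = 0$, and $b_{3}(V) = 2 h^{1,2}(V)$, so that
\[
\chi_{\tx{top}}(V) = 4 - 2 h^{1,2}(V).
\]
Thus for each of the four varieties it suffices to pin down either $\chi_{\tx{top}}(V)$ or the intermediate Jacobian $J^{2}(V)$.

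For $B(4)$, the smooth intersection of two quadrics in $\P^{5}$, I would invoke the classical theorem of Newstead and Narasimhan-Ramanan identifying $J^{2}(B(4))$ with the Jacobian of the hyperelliptic curve of genus two parametrising the singular members of the associated pencil of quadrics; this yields $h^{1,2}(B(4)) = 2$. For $B(5)$, I would use the fact that every smooth codimension-three linear section of $G(2,5) \sb \P^{9}$ is projectively equivalent to a fixed model, so that a direct Chern-class computation on $G(2,5)$ gives $\chi_{\tx{top}}(B(5)) = 4$, and the displayed formula then forces $h^{1,2}(B(5)) = 0$.

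For the Mukai threefolds $V(9)$ and $V(10)$, I would appeal to Mukai's linear-section descriptions \cite{Muk95} together with the corresponding intermediate-Jacobian computations, which present $J^{2}(V(9))$ as the Jacobian of a smooth plane quartic of genus three and $J^{2}(V(10))$ as the Jacobian of a smooth curve of genus two, yielding the remaining values $h^{1,2}(V(9)) = 3$ and $h^{1,2}(V(10)) = 2$. The main obstacle is essentially cosmetic: since the lemma is a purely classical consequence of the classification, the only care required is to cite each intermediate-Jacobian reference precisely and to confirm the numerics via the Euler-characteristic formula above.
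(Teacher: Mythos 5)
Your proposal is correct and takes essentially the same route as the paper, which simply records these four values as well-known consequences of the classification of Fano threefolds of Picard rank one and cites \cite{Fanobook} without further argument. The intermediate-Jacobian and Euler-characteristic verifications you sketch are accurate (and all four numerics match the standard tables), but they go beyond what the paper itself supplies.
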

By using Lemma \ref{lem:3}, \ref{lem:4} and \ref{lem:h12ofFano}, 
we obtain the Hodge numbers $h^{1,2}(X)$ for $X$ appearing in Case (B-i) as in TABLE \ref{table:B}.
\section{Constructions}
\subsection{Case (B-i-1)}\label{ss:Bi1}
Let us construct an example belonging to Case (B-i-1). 

Let $C \sb \P^{5}$ be an elliptic curve of degree 6. 
Then there exists a linear subvariety $\P^{5} \sb \P^{7}$ such that $C=\P^{5} \cap (\P^{1})^{3}$, where $(\P^{1})^{3} \hra \P^{7}$ is the Segre embedding. 
In particular, $C$ is defined by quadratic equations. 
Thus we can take general quadric hypersurfaces $Q_{1},Q_{2}$ containing $C$ such that $V:=Q_{1} \cap Q_{2}$ is a non-singular del Pezzo threefold of degree 4. 

Let $\vp^{+} \colon X^{+}:=\Bl_{C}V \to V$ be the blowing-up along $C$ and set $H:=\vp^{+\ast}\mc O_{\P^{5}}(1)|_{V}$. 
Since $C$ is defined by quadratic equations, we obtain that $-K_{X^{+}}=2H-D$ is nef. Also we obtain that $(-K_{X^{+}})^{3}=8$ by a straightforward calculation. 
In particular, $-K_{X^{+}}$ is nef and big. 

By the classification in \cite{MM81}, $-K_{X^{+}}$ is not ample and thus 
there exists the contraction of the $K_{X^{+}}$-trivial ray $\psi \colon X^{+} \to \ol{X}$. 
$\psi^{+}$ is not a divisorial contraction by the classification in \cite{JPR05} and thus 
there exists the flop $\eta \colon X^{+} \dra X$ of $\psi$ and 
the contraction of the $K_{X}$-negative ray $\vp \colon X \to V$. 
By the classification in \cite{CM13} and \cite{JPR11}, $\vp$ is neither a divisorial contraction nor a conic bundle. Hence $\vp$ is a del Pezzo fibration. 
By the classification in \cite{JPR11}, $\vp \colon X \to \P^{1}$ is an example of Case (B-i-1). \bbox
\subsection{Case (B-i-2)}\label{ss:Bi2}
In order to construct an example belonging to Case (B-i-2), we consider the following lattice : 
\[\begin{pmatrix}
&H&C\\
H&18&6\\
C&6&0
\end{pmatrix}.\]

This is an even lattice with signature $(1,1)$. By Theorem \ref{thm:NSL}, there exists K3 surface $S$ such that $\Pic(S)=\Z \cdot H \oplus \Z \cdot C$ is isometry to this lattice. 
Moreover, we may assume that $H$ is nef and big. 

For every $D=xH+yC \in \Pic(S)=\Z \cdot H \oplus \Z \cdot C$, we have 
\[D^{2}=18x^{2}+12xy \text{ and } H.D=18x+6y.\]
Note that $D^{2}$ and $H.D$ are multiple by $6$.
In particular, $S$ has no $(-2)$-curve and hence every effective divisor on $S$ is base point free. 
Thus $H$ is very ample by Theorem \ref{thm:SDR}. 
Since $C^{2}=0$ and $H.C=6$, $C$ is effective and hence has no base point. 
Therefore, we may assume that $C$ is non-singular. 
\begin{lem}\label{lem:BN1}
This polarized K3 surface $(S,H)$ is Brill-Noether general.
\end{lem}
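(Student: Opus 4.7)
The plan is to enumerate every decomposition $H = L + N$ with $L, N \in \Pic(S) \setminus \{0\}$ and verify $h^0(L)h^0(N) < h^0(H)$ directly. If at least one of $L, N$ is non-effective then $h^0(L)h^0(N) = 0$, so the inequality is trivial; hence the real work is when both $L$ and $N$ are effective. Since $H$ is very ample with $H^2 = 18$, Riemann--Roch on the K3 surface $S$ gives $h^0(H) = H^2/2 + 2 = 11$.

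Writing $L = aH + bC$ and $N = (1-a)H - bC$ with $a, b \in \Z$, I would first cut down the search using positivity against $H$. Very ampleness of $H$ forces $H \cdot L > 0$ and $H \cdot N > 0$ whenever $L, N$ are nonzero effective, and these read $18a + 6b > 0$ and $18(1-a) - 6b > 0$, which combine to $3a + b \in \{1, 2\}$. Next, I would use the already established fact that the intersection form on $\Pic(S)$ takes values in $6\Z$ and so $S$ carries no $(-2)$-curves; as a consequence, every effective divisor class $D$ on $S$ satisfies $D^2 \geq 0$. Applying this to
\[
L^2 = 6a(3a + 2b) \quad \text{and} \quad N^2 = 6(1-a)\bigl(3(1-a) - 2b\bigr)
\]
and substituting $b = 1 - 3a$ or $b = 2 - 3a$, the integer search collapses to the single decomposition (up to interchanging $L$ and $N$) given by $L = C$, $N = H - C$.

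It then remains to bound $h^0(C) \cdot h^0(H - C)$. By Theorem~\ref{thm:SDR}(3), since $C^2 = 0$ and $C$ is a primitive element of the rank-two lattice $\Pic(S)$, the system $|C|$ is an elliptic pencil and so $h^0(C) = 2$. For $H - C$ we have $(H-C)^2 = 6 > 0$ and $H \cdot (H-C) = 12 > 0$, so $H - C$ is effective; because $S$ has no $(-2)$-curves, Theorem~\ref{thm:SDR}(1) gives that $H - C$ is base-point free, and the Saint-Donat vanishing then yields $h^1(H - C) = 0$. Riemann--Roch produces $h^0(H - C) = 5$, whence
\[
h^0(L) \cdot h^0(N) = 2 \cdot 5 = 10 < 11 = h^0(H),
\]
proving Brill--Noether generality.

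The main obstacle I expect is the enumeration step: one must check that the joint constraints coming from $H$-positivity and from $L^2, N^2 \geq 0$ truly leave only the decomposition $H = C + (H - C)$, rather than some larger list requiring an ad hoc argument for each case. The lattice being valued in $6\Z$ (forbidding $(-2)$-classes) is what makes this enumeration uniform and sharp.
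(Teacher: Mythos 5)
Your proposal is correct and follows essentially the same route as the paper: write $H=L+N$ with $L=aH+bC$, cut down the possibilities using $H$-positivity together with $L^2,N^2\geq 0$ (valid since the lattice rules out $(-2)$-curves), arrive at the unique decomposition $H=C+(H-C)$, and check $h^0(C)h^0(H-C)=2\cdot 5=10<11=h^0(H)$. The only cosmetic difference is that the paper also invokes $C.L\geq 0$ and $C.N\geq 0$ in the enumeration, which your constraints render unnecessary.
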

\begin{proof}
Let $L$ and $N$ be non-zero effective divisors such that $H=L+N$ and set $L=xH+yC$ for some $x,y \in \Z$. 
Thus we have $N=(1-x)H-yC$. 
Since $L$ and $N$ are effective, we have $H.L>0$, $H.N>0$, $C.L \geq 0$, $C.N \geq 0$, $L^{2} \geq 0$ and $N^{2} \geq 0$. By this observation, we obtain the following:
\[0 \leq x \leq 1, \quad 0 \leq 3x+y <3 \text{ and } 0 \leq 3x+2y \leq 3.\]
By a straightforward calculation, the pairs of integers satisfying this inequalities are $(x,y)=(0,1),(1,-1)$. In each case, $h^{0}(L)h^{0}(N)=10 < h^{0}(H)=11$.
\end{proof}
Therefore, there exists a Mukai threefold $V$ of genus $10$ having $S$ as an anti-canonical divisor. 
Let $\vp^{+} \colon X^{+}:=\Bl_{C}V \to V$ denotes the blowing-up of $V$ along $C$. 
Since $C \sb S$, we have $S \simeq (\vp^{+})^{-1}_{\ast}S \in |-K_{X^{+}}|$. 
It is clear that $-K_{X^{+}}$ is nef if and only if $-K_{X^{+}}|_{S}$ is nef. 
For the line bundle $-K_{X^{+}}|_{S}=H-C$, we have $(H-C)^{2}=6$ and $H.(H-C)=12$ and hence $|H-C|$ is base point free. 
Hence $-K_{X^{+}}$ is nef and $(-K_{X^{+}})^{3}=6$. 
Thus $-K_{X^{+}}$ is nef and big. 
By applying similar arguments in $\S$ \ref{ss:Bi1}, we obtain the flop $\chi \colon X^{+} \dra X$ and a del Pezzo fibration $\vp \colon X \to \P^{1}$ of degree $6$ which belongs to Case (B-i-2). \bbox
\subsection{Case (B-i-3)}\label{ss:Bi3}
In order to construct an example belonging to Case (B-i-3), we consider the following lattice : 
\[\begin{pmatrix}
&H&C\\
H&16&6\\
C&6&0
\end{pmatrix}.\]
This is an even lattice with signature $(1,1)$. By Theorem \ref{thm:NSL}, there exists a K3 surface $S$ such that $\Pic(S)=\Z \cdot H \oplus \Z \cdot C$ is isometry to this lattice and $H$ is nef and big. 

For every $D=xH+yC \in \Pic(S)$ with $x,y \in \Z$, we have 
\[D^{2}=16x^{2}+12xy \text{ and } H.D=16x+6y.\]
Note that $D^{2}$ is multiple by $4$.
In particular, $S$ has no $(-2)$-curve and every effective divisor on $S$ is base point free. 
By using Theorem \ref{thm:SDR}, it is easy to see that $H$ is very ample. 
Since $C^{2}=0$ and $H.C=6$, $C$ is an effective divisor and hence it is base point free. Therefore, we may assume that $C$ is non-singular by Theorem \ref{thm:SDR}. 
The following lemma can be proved similarly to Lemma \ref{lem:BN1}. 
\begin{lem}\label{lem:BN2}
This polarized K3 surface $(S,H)$ is Brill-Noether general.
\end{lem}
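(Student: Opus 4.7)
The plan is to mimic the strategy of Lemma \ref{lem:BN1} verbatim, using the lattice data $H^2=16$, $H\cdot C=6$, $C^2=0$. Let $L,N$ be nonzero effective divisors with $H=L+N$, and write $L=xH+yC$ so that $N=(1-x)H-yC$. The goal is to classify the pairs $(x,y)\in\Z^2$ compatible with both $L$ and $N$ being effective, and then to verify $h^0(H)>h^0(L)h^0(N)$ in each surviving case.

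First I would translate effectivity into numerical inequalities. The intersection numbers give $H\cdot L=16x+6y>0$, $H\cdot N=16(1-x)-6y>0$, $C\cdot L=6x\geq 0$, and $C\cdot N=6(1-x)\geq 0$. Since $S$ has no $(-2)$-curves (established earlier in this subsection), every effective divisor is a sum of nef irreducible curves and hence nef, so also $L^2=16x^2+12xy\geq 0$ and $N^2=16(1-x)^2-12(1-x)y\geq 0$. From $C\cdot L,C\cdot N\geq 0$ one immediately gets $x\in\{0,1\}$, after which the remaining three inequalities pin down $y$ uniquely in each case, leaving only the two symmetric decompositions $(x,y)=(0,1)$ and $(1,-1)$, i.e.\ $H=C+(H-C)$.

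Then I would compute the sectional spaces by Riemann--Roch on the K3 surface $S$. Since $(H-C)^2=16-12+0=4>0$ and $H-C$ is nef, Kodaira vanishing applies and gives $h^0(H-C)=(H-C)^2/2+2=4$; similarly $h^0(C)=2$ (either by Riemann--Roch, or because $|C|$ is an elliptic pencil) and $h^0(H)=H^2/2+2=10$. Hence in both admissible decompositions $h^0(L)h^0(N)=2\cdot 4=8<10=h^0(H)$, which verifies the Brill--Noether generality condition.

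The only step requiring any care is the numerical enumeration that cuts the set of decompositions down to the single pair $H=C+(H-C)$; this is a finite check entirely analogous to the one carried out in Lemma \ref{lem:BN1}, and the tighter margin here (compared with the $10<11$ inequality in that lemma) still leaves room to conclude, so no genuine obstacle is expected.
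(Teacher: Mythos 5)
Your proposal is correct and is exactly the argument the paper intends: the paper's proof of this lemma is literally the statement that it ``can be proved similarly to Lemma \ref{lem:BN1},'' and you have carried out that same enumeration (using $C\cdot L, C\cdot N\geq 0$ to force $x\in\{0,1\}$ and $L^2,N^2\geq 0$ to force $y$, leaving only $H=C+(H-C)$) with the correct count $h^{0}(L)h^{0}(N)=2\cdot 4=8<10=h^{0}(H)$. No issues.
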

Therefore, there exists a Mukai threefold $V$ of genus $9$ having $S$ as an anti-canonical divisor. 
Let $\vp^{+} \colon \Bl_{C}V=:X^{+} \to V$ denotes the blowing-up of $V$ along $C$. By a straightforward calculation, $(-K_{X^{+}})^{3}=4$ holds. 
By the similar argument in $\S$ \ref{ss:Bi2}, $-K_{X}$ is nef and big and we obtain the flop $\chi \colon X^{+} \dra X$ and a del Pezzo fibration $\vp \colon X \to \P^{1}$ of degree $6$ which belongs to Case (B-i-3). \bbox
\subsection{Quadric fibrations with tri-sections}\label{ss:dP6dP8}
In this subsection, we prove Proposition \ref{prop:1}. 
This proposition plays an important role on the constructions of the examples belonging to Case (A-1) , (B-ii) and (B-iii). 
In order to prove Proposition \ref{prop:1}, we prepare the following two lemmas. 
\begin{lem}\label{lem:5}
Let $Y$ be a smooth projective threefold and $S$ a smooth projective surface and $g \colon Y \to S$ and $\vp \colon S \to \P^{1}$ morphisms. 
Assume $g$ is a contraction of $K_{Y}$-negative ray over $\P^{1}$. Set $\pi:=\vp \circ g$. 
\[\xymatrix{
Y\ar[r]^{g} \ar[rd]_{\pi} & S \ar[d]^{\vp} \\
&\P^{1}
}\]
If $\rho(Y)=3$ and $\pi^{\ast}\mc O(1).(-K_{Y})^{2}>0$, then $S \simeq \F_{n}$ and $\vp \colon S=\F_{n} \to \P^{1}$ is the $\P^{1}$-bundle structure. 
\end{lem}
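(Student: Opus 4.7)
The plan is to split the argument into two stages: first show that a general fiber of $\vp$ is $\P^{1}$, and then use $\rho(S)=2$ to conclude that $\vp$ realizes $S$ as a Hirzebruch surface.

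For the first stage, since $g$ contracts a single $K_{Y}$-negative extremal ray with $\dim S<\dim Y$ and both $Y,S$ are smooth, $g$ is a standard conic bundle; in particular $\rho(S)=\rho(Y)-1=2$ and a general fiber of $g$ is $\P^{1}$. By generic smoothness in characteristic zero, for general $t\in\P^{1}$ the fiber $F_{t}:=\pi^{-1}(t)$ is smooth and $C_{t}:=\vp^{-1}(t)$ is a smooth curve in $S$. Adjunction on the pencil $|\pi^{*}\mc O_{\P^{1}}(1)|$ gives $K_{F_{t}}=K_{Y}|_{F_{t}}$, and the hypothesis translates into
\[
(-K_{F_{t}})^{2}=(-K_{Y})^{2}\cdot\pi^{*}\mc O_{\P^{1}}(1)>0.
\]

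Now $g|_{F_{t}}\colon F_{t}\to C_{t}$ is a conic bundle over a smooth curve, so $F_{t}$ is a smooth projective ruled surface over each connected component of $C_{t}$. Any smooth ruled surface over a curve of genus $g$ satisfies $K^{2}\leq 8(1-g)$ with equality only on the minimal model, since each blowup drops $K^{2}$ by one; combined with the inequality above this forces every component $C_{t}'\sb C_{t}$ to have genus $0$. Applying the same argument after replacing $\vp$ by the first map of its Stein factorization $S\to T\to\P^{1}$ shows $T\simeq\P^{1}$, so $\vp$ itself has connected fibers and the general fiber is $\P^{1}$.

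For the second stage I would rule out reducible or non-reduced fibers of $\vp$. Flatness of $\vp$ together with $p_{a}=0$ of fibers and adjunction on $S$ guarantees that any such degenerate fiber contains a $(-1)$-curve $C\sb S$; contracting $C$ produces a smooth projective surface $S'$ with a $\P^{1}$-fibration $S'\to\P^{1}$ and $\rho(S')=1$, which is impossible because a fiber class has self-intersection $0$ and hence cannot be numerically proportional to an ample class on a projective surface. Thus every fiber of $\vp$ is isomorphic to $\P^{1}$, making $\vp\colon S\to\P^{1}$ a $\P^{1}$-bundle; every $\P^{1}$-bundle over $\P^{1}$ is some $\F_{n}$, proving the lemma. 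The main technical obstacle is justifying the ruled-surface bound $K^{2}\leq 8(1-g)$ for the conic-bundle (not $\P^{1}$-bundle) structure on $g|_{F_{t}}$, together with the careful Stein-factorization bookkeeping needed to ensure that $\vp$ itself---and not merely some intermediate factor---is the Hirzebruch ruling.
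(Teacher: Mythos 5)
Your route is genuinely different from the paper's. The paper applies the Mori--Mukai formula $g_{*}(-K_{Y})^{2}\equiv -(4K_{S}+\Delta)$ for the conic bundle $g$ with discriminant $\Delta$, so that the projection formula gives $-(4K_{S}+\Delta).F=\pi^{*}\mc O(1).(-K_{Y})^{2}>0$ for a $\vp$-fiber $F$, hence $-K_{S}$ is $\vp$-ample; since $\rho(S/\P^{1})=1$, the map $\vp$ is then a $K_{S}$-negative extremal contraction onto a curve, i.e.\ a $\P^{1}$-bundle. Your computation on the threefold fibers $F_{t}=\pi^{-1}(t)$, using $K^{2}\leq 8(1-g)$ for surfaces birationally ruled over a genus-$g$ curve, is a legitimate substitute for that formula, and your second stage (a degenerate fiber of a genus-$0$ fibration contains a $(-1)$-curve, whose contraction would produce $\rho=1$ together with a nonzero fiber class of self-intersection $0$) is correct and is in substance the same use of $\rho(S)=2$ as the paper's final sentence.

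The gap is in your treatment of connectedness. First, if $C_{t}$ has components of genera $g_{1},\dots,g_{k}$, your inequality only gives $0<(-K_{F_{t}})^{2}\leq 8\sum_{i}(1-g_{i})$, i.e.\ $\sum_{i} g_{i}<k$, which forces \emph{some} component to be rational, not all of them; to conclude you must add that for general $t$ the components of $C_{t}$ are fibers of $S\to T$, hence algebraically equivalent with a common genus. More seriously, the assertion that running the argument for $S\to T$ ``shows $T\simeq\P^{1}$, so $\vp$ itself has connected fibers'' is a non sequitur: rationality of the fibers of $S\to T$ says nothing about $g(T)$, nor about $\deg(T\to\P^{1})$. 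In fact, connectedness of the $\vp$-fibers cannot be extracted from the numerical hypotheses at all: take $S=\F_{n}$ and let $\vp$ be the ruling followed by a degree-two cover $\P^{1}\to\P^{1}$; then $\pi^{*}\mc O(1).(-K_{Y})^{2}>0$ still holds, but $\vp$ is not the bundle projection. The lemma is invoked (in the proof of Proposition \ref{prop:1}) only when $\pi=q$ is the structure morphism of a fibration, so $\pi_{*}\mc O_{Y}=\mc O_{\P^{1}}$ and hence $\vp_{*}\mc O_{S}=\mc O_{\P^{1}}$; you should take this as part of the setup (as the paper's proof implicitly does) rather than try to derive it. With that in place $T=\P^{1}$ and $C_{t}$ is irreducible for general $t$, and the rest of your argument goes through.
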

\begin{proof}
Let $\D \sb S$ be a discriminant divisor of the conic bundle $g \colon Y \to S$. 
Then $g_{\ast}(-K_{Y})^{2} \equiv -(4K_{S}+\D)$ holds \cite[Corollary~4.6]{MM86}. 
Let $F$ be a $\vp$-fiber. 
By assumptions of this lemma, we have that 
$-F.(4K_{S}+\D)=F.g_{\ast}(-K_{Y})^{2}=g^{\ast}F.(-K_{Y})^{2}=\pi^{\ast}\mc O(1).(-K_{Y})^{2}>0$. 
Hence we have $4K_{S}.F<-\D.F \leq 0$, which means $-K_{S}$ is $\vp$-ample. 
Since $\rho(S)=2$, all fibers of $\vp$ are isomorphic to $\P^{1}$.
\end{proof}
\begin{lem}\label{lem:6}
Let $f \colon X \to S$, $f' \colon X' \to S'$ be Mori fiber spaces and 
$\Phi \colon X \dra X'$ an isomorphism in codimension 1. 
Assume there exists a rational map $\vp \colon S \dra S'$ such that the following diagram is commutative: 
\[\xymatrix{
X\ar[d]_{f} \ar@{-->}[r]^{\Phi} & X' \ar[d]^{f'} \\
S \ar@{-->}[r]_{\vp} & S'.
}\]
Then $\Phi$ and $\vp$ are isomorphic. 
\end{lem}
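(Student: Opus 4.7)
The plan is to first show that $\vp$ is an isomorphism, and then use the relative anti-canonical models of the two Mori fiber spaces to conclude that $\Phi$ itself is an isomorphism.

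First, I take a smooth projective common resolution $p \colon W \to X$, $q \colon W \to X'$ of $\Phi$, so that $q = \Phi \circ p$ as rational maps. Since $\Phi$ is an isomorphism in codimension one, each $p$-exceptional prime divisor on $W$ is also $q$-exceptional and vice versa; otherwise $\Phi$ or $\Phi^{-1}$ would contract a divisor. The morphism $f' \circ q \colon W \to S'$ coincides with the rational map $\vp \circ f \circ p$ and is in particular constant on a general fiber of $\pi := f \circ p$. Because $f$ is a Mori fiber space and $p$ is birational, $\pi_{\ast}\mc O_{W} = \mc O_{S}$, so $\pi$ is a proper surjection with connected fibers; the rigidity lemma then lets $f' \circ q$ factor through $\pi$ locally, and by separatedness of $S'$ these local factorizations glue to a global morphism that extends $\vp$. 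Applying the same argument to $\Phi^{-1}$ produces a morphism $\vp^{-1}$, so $\vp$ is an isomorphism. Via $\vp$ I identify $S$ with $S'$ so that $\pi = f \circ p = f' \circ q$.

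Next, I apply the negativity lemma to show $p^{\ast}(-K_{X}) = q^{\ast}(-K_{X'})$ on $W$. Writing $-K_{W} = p^{\ast}(-K_{X}) - E_{p} = q^{\ast}(-K_{X'}) - E_{q}$ with $E_{p}$ (resp. $E_{q}$) an effective divisor supported on the common $p$- and $q$-exceptional locus, I compute, for any $p$-contracted curve $C \sb W$, that $p^{\ast}(-K_{X}).C = 0$ while $q^{\ast}(-K_{X'}).C = (-K_{X'}).q_{\ast}C \geq 0$, since $q_{\ast}C$ lies in a fiber of $f'$ (as $\pi$ contracts $C$) and $-K_{X'}$ is $f'$-ample. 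Hence $(E_{q} - E_{p}).C \geq 0$ for every $p$-contracted curve, so the $p$-exceptional $\mathbb{Q}$-divisor $E_{q} - E_{p}$ is $p$-nef, and the negativity lemma forces it to be anti-effective. The symmetric argument (exchanging the roles of $p$ and $q$) gives the reverse inequality, whence $E_{p} = E_{q}$ and $p^{\ast}(-K_{X}) = q^{\ast}(-K_{X'})$.

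Finally, since $-K_{X}$ and $-K_{X'}$ are relatively ample for $f$ and $f'$ respectively, $X$ and $X'$ are the relative anti-canonical models over $S$. The projection formula combined with $p_{\ast}\mc O_{W} = \mc O_{X}$ and $q_{\ast}\mc O_{W} = \mc O_{X'}$ yields
\[
f_{\ast}\mc O_{X}(-nK_{X}) = \pi_{\ast}\mc O_{W}(np^{\ast}(-K_{X})) = \pi_{\ast}\mc O_{W}(nq^{\ast}(-K_{X'})) = f'_{\ast}\mc O_{X'}(-nK_{X'})
\]
for all $n \geq 0$, so $X \cong \Proj_{S}\bigoplus_{n} f_{\ast}\mc O_{X}(-nK_{X}) \cong X'$ as $S$-schemes, and the resulting isomorphism agrees with $\Phi$ because it is induced by the common resolution $W$. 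I expect the main technical hurdle to lie in the curve-pairing argument that makes the negativity lemma applicable; the rigidity step for $\vp$ and the final relative $\Proj$ identification are routine once that is in hand.
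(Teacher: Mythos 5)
The paper does not actually prove this lemma --- it simply cites \cite[Proposition~3.5]{Cor95} --- so your argument is necessarily an independent route. Its overall architecture is the standard and correct one: once one knows that $f'\circ q=\vp\circ\pi$ with $\vp$ an isomorphism of the bases, your negativity-lemma comparison of the two discrepancy divisors $E_{p},E_{q}$ (using that $-K_{X}$ and $-K_{X'}$ are relatively ample and that $\Exc(p)=\Exc(q)$) correctly yields $p^{\ast}K_{X}=q^{\ast}K_{X'}$, and the identification of $X$ and $X'$ with the relative anti-canonical model over $S$ then forces $\Phi$ to be an isomorphism. Those two steps are fine.

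The genuine gap is in your first step. The rigidity lemma gives a factorization of $f'\circ q$ through $\pi$ only near a fibre $\pi^{-1}(s_{0})$ that is \emph{already known} to be contracted by $f'\circ q$; commutativity of the square only guarantees this for general fibres. The locus of $s\in S$ over which $\pi^{-1}(s)$ is contracted is open and dense, but not obviously closed, so your ``local factorizations'' cover only a dense open subset of $S$ and glue to nothing more than the rational map $\vp$ you started with. Concretely, you must exclude a special fibre $\pi^{-1}(s_{0})$ containing an irreducible curve $C$ with $f'(q(C))$ a curve; the dangerous case is $C$ lying inside a $p$-exceptional divisor $E_{i}$ whose image $q(E_{i})$ (of codimension $\geq 2$ in $X'$) is a curve not contained in a fibre of $f'$ --- a pullback-of-ample computation kills the case $C\not\sb\Exc(p)$ but not this one. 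The gap propagates: your negativity step needs $q_{\ast}C$ to lie in an $f'$-fibre for \emph{every} $p$-exceptional curve $C$, and such curves sit over exactly the special points of $S$ where you have not yet shown $\vp$ to be defined. In the paper's only application $S=S'=\P^{1}$, so $\vp$ and $\vp^{-1}$ extend to morphisms of smooth curves automatically and your proof closes up; but as a proof of the lemma as stated, for $\dim S\geq 2$, the extension of $\vp$ to a morphism is the real content and needs a separate argument (this is essentially what Corti's proposition supplies).
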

\begin{proof}
See \cite[Proposition~3.5]{Cor95}. 
\end{proof}
\begin{prop}\label{prop:1}
Let $W$ be a smooth projective threefold with $\rho(W)=2$, 
$\pi \colon W \to \P^{1}$ a quadric fibration, 
$B \sb W$ a smooth projective curve and 
$\t \colon Z:=\Bl_{B}W \to W$ the blowing-up along $B$. 
We assume the following condition $(\dag_{6})$ for the pair $(\pi \colon W \to \P^{1},B)$.
\[(\dag_{6}) \cdots
\left\{ \begin{array}{ll}
\deg (\pi|_{B} \colon B \to \P^{1})=3 \text{ and }\\
-K_{Z} \text{ is $p$-nef and $p$-big with } p := \pi \circ \t \colon Z \to \P^{1}. 
\end{array} \right. \]
Then the following hold.
\begin{enumerate}
\item The morphism from $Z$ to the relative anti-canonical model over $\P^{1}$ 
\[ \Psi \colon Z \to \ol{Z}:=\Proj_{\P^{1}}\bigoplus_{n \geq 0} p_{\ast}\mc O_{Z}(-nK_{Z})\]
is a small contraction or an isomorphism. 
\item Define a variety $Y$ and a birational map $\Phi \colon Z \dra Y$ over $\P^{1}$ as follows: 
\begin{itemize}
\item If $-K_{Z}$ is $p$-ample, we set $Y:=Z$ and $\Phi:=\id_{Z} \colon Z \to Y$, or 
\item if $-K_{Z}$ is not $p$-ample, let $\Phi \colon Z \dra Y$ be the flop of $\Psi \colon Z \to \ol{Z}$. 
\end{itemize}

Let $q \colon Y \to \P^{1}$ be a structure morphism onto $\P^{1}$ and $\mu \colon Y \to X$ a $K_{Y}$-negative ray contraction over $\P^{1}$. If $-K_{Z}$ is $p$-ample, we choose the other ray which is different to the one corresponding to $\t \colon Z \to W$. Let $\vp \colon X \to \P^{1}$ denotes the structure morphism onto $\P^{1}$. 

Then $\mu$ is the blowing-up along a $\vp$-section $C$ and $\vp \colon X \to \P^{1}$ is a del Pezzo fibration of degree 6. 
\[\xymatrix{
&D \ar[ld]& \ar@{}[l]|{\sb} \ar[ld]_{\mu}Y=\Bl_{C}X \ar[dd]^{q}\ar@{<--}[r]^{\Phi}& Z=\Bl_{B}W \ar@{}[r]|{\sp} \ar[rd]^{\t} \ar[dd]_{p}& E \ar[rd]& \\
C&\ar@{}[l]|{\sb} X \ar[rd]_{\vp}& && W \ar@{}[r]|{\sp} \ar[ld]^{\pi} &B&(\bigstar_{6}) \\
&&\P^{1} \ar@{=}[r]&\P^{1}&&
} \]
\item Let $F_{Y}$ be a general $q$-fiber and we set $D:=\Exc(\mu)$, $E:=\Exc(\t)$ and $E_{Y}:=\Phi_{\ast}E$. Then we obtain 
\[D \equiv \frac{1}{2}(-K_{Y})-\frac{1}{2}E_{Y}+zF_{Y}.\]
Moreover, the following equalities hold: 
\begin{align*}
(-K_{X})^{3}&=\frac{3(-K_{W})^{3}-16g_{B}-32}{4}, \\
-K_{X}.C&=\frac{8 (-K_{W}).B-24g_{B}-(-K_{W})^{3}-32}{8} \text{ and } \\
z&=\frac{4(-K_{W}).B-8g_{B}-(-K_{W})^{3}}{8}.
\end{align*}
\end{enumerate}
\end{prop}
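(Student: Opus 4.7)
My approach is to address the three parts in turn, with the main geometric content concentrated in part (2), and to reduce part (3) to intersection-theoretic bookkeeping once the structure is established.

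\textbf{Part (1).} A general $\pi$-fiber is a smooth quadric $Q_t \simeq \P^1 \times \P^1$, meeting $B$ transversally in three points by $\deg(\pi|_B) = 3$. For general $t$, these three points are in general position on $Q_t$, so a general $p$-fiber is $Z_F = \Bl_{3\text{ pts}}(\P^1 \times \P^1)$, a smooth weak del Pezzo surface of degree $5$ on which one checks $-K_{Z_F} = 2H - e_1 - e_2 - e_3$ meets every $(-1)$-curve positively and admits no $(-2)$-curves for a generic triple; hence $-K_{Z_F}$ is ample on general fibers. Since $-K_Z$ is $p$-nef and $p$-big, the relative base-point-free theorem applies and yields the $p$-semiample morphism to the relative anti-canonical model, which is precisely $\Psi \colon Z \to \bar Z$. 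Its exceptional locus must lie in the special fibers of $p$ where $-K_Z$ fails to be relatively ample, hence has codimension $\geq 2$ in $Z$; this gives (1).

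\textbf{Part (2).} If $\Psi$ is an isomorphism, set $Y := Z$; otherwise $\Psi$ is a flopping contraction and Koll\'ar's theorem furnishes the flop $\Phi \colon Z \dra Y$ with $Y$ smooth. In either case $\rho(Y) = 3$ and $-K_Y$ is $q$-nef and $q$-big. The crucial geometric input is the following ``relative conic surface'': in each smooth fiber $Q_t$, the three points $B \cap Q_t$ lie on a unique $(1,1)$-conic $C_t$, and these sweep out a divisor $\mathcal{C} \subset W$ containing $B$. Its proper transform $\widetilde{\mathcal{C}} \subset Z$ meets a general $Z_F$ in the $(-1)$-curve $H - e_1 - e_2 - e_3$. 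After the flop (trivial on general fibers, since $-K_{Z_F}$ is ample), $\widetilde{\mathcal{C}}$ becomes a divisor $D \subset Y$ with $D|_{Y_F}$ a $(-1)$-curve; the class of this $(-1)$-curve spans the second extremal ray of $\overline{NE}(Y/\P^1)$, which is $K_Y$-negative. By Mori's theorem, no small $K$-negative contraction exists on a smooth threefold, so the contraction $\mu \colon Y \to X$ of this ray is divisorial with exceptional divisor $D$. Since $D|_{Y_F}$ is a single irreducible $(-1)$-curve for general $F$, Mori's classification forces $\mu$ to be the blow-up of a smooth curve $C \subset X$; moreover $\mu(D) \cap X_F$ is a single point for general $F$, so $C$ is a $\varphi$-section. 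Contracting this $(-1)$-curve in each $Y_F$ yields a del Pezzo of degree $6$, so $\varphi \colon X \to \P^1$ is a del Pezzo fibration of degree $6$; Lemma~\ref{lem:6} may be invoked if needed to ensure compatibility of the various fibrations.

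\textbf{Part (3).} The class identity $D \equiv \tfrac{1}{2}(-K_Y) - \tfrac{1}{2} E_Y + z F_Y$ is verified by restricting to a general $q$-fiber: with $D|_{Y_F} = H - \sum e_i$, $E_Y|_{Y_F} = \sum e_i$, and $-K_{Y_F} = 2H - \sum e_i$, one finds $(2D + E_Y + K_Y)|_{Y_F} = 0$. Thus $2D + E_Y + K_Y$ is $q$-numerically trivial and hence a rational multiple of $F_Y$, which determines $z \in \tfrac{1}{2}\Z$. The three numerical values $(-K_X)^3$, $-K_X \cdot C$, $z$ are then obtained from a linear system built from: (a) the blow-up formula $(-K_Z)^3 = (-K_W)^3 - 4(-K_W)\cdot B - 2g_B + 2$ for $\tau$; (b) its analogue for $\mu$ with $g_C = 0$, giving $(-K_Y)^3 = (-K_X)^3 - 4(-K_X)\cdot C + 2$; (c) flop-invariance $(-K_Y)^3 = (-K_Z)^3$ and preservation of the intersections appearing above under the flop; and (d) intersecting the class identity with judicious cycles (for instance $(-K_Y)^2$ and a $\tau$-exceptional fiber line). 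Solving this system yields the claimed closed-form expressions.

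\textbf{Main obstacle.} The heart of the argument is Part (2): producing the relative conic surface $\mathcal{C}$ and verifying globally that the associated $K_Y$-negative contraction $\mu$ is a divisorial blow-up of a smooth section rather than a fibration, a contraction to a point, or a blow-up along a multisection. The delicate point is ensuring that the fiberwise elementary transformation (quadric surface with three marked points $\dra$ del Pezzo of degree $6$) globalizes coherently despite possibly reducible or non-reduced quadric fibers of $\pi$; this is where the smoothness of $Z$, the hypothesis $(\dag_6)$, and the control over $\Exc(\Psi)$ from Part (1) all come together. Once Part (2) is in place, Parts (1) and (3) are standard verifications.
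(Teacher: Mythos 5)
Your proposal takes a genuinely more geometric route than the paper (which argues almost entirely numerically, writing every relevant divisor as $x(-K_{Z})+yE+zF_{Z}$ and deriving contradictions from the rationality and integrality of the coefficients), but as written it has a genuine gap that sits exactly at the heart of the statement. In Part (1) you assert that for general $t$ the three points of $B\cap Q_{t}$ are in general position, so that $-K_{Z}$ is ample on the general $p$-fiber. This does not follow from $(\dag_{6})$: nothing in the hypotheses prevents two of the three points from lying on a common ruling of $Q_{t}$ for \emph{every} $t$, in which case the proper transforms of those rulings are $(-2)$-curves in every fiber, $-K_{Z}$ is still $p$-nef and $p$-big, and $\Psi$ contracts the divisor they sweep out --- precisely the divisorial case that (1) must exclude. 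Excluding it is the whole content of (1), and the paper does so by using $\rho(Z)=3$: writing $D=\Exc(\Psi)\equiv x(-K_{Z})+yE+zF_{Z}$ and intersecting on a general fiber gives $5x+3y=0$ and $5x^{2}-3y^{2}+6xy=-2n$ with $n\in\{1,2\}$, forcing $x=\pm\sqrt{15n}/10\notin\Q$. Your argument never invokes $\rho(W)=2$, so it cannot close this gap; moreover, even granting genericity, ``the exceptional locus lies in special fibers, hence has codimension $\geq 2$'' is a non sequitur, since a component of a special fiber is a divisor. The same unproved genericity underlies your construction of the relative conic surface in Part (2), and there the further claim that its class spans the second extremal ray of $\ol{\NE}(Y/\P^{1})$ is also unproved: one must rule out curves in special fibers and flopped curves whose classes lie outside the subcone generated by curves in general fibers. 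The paper sidesteps this by classifying the possible numerical types $(m,x,y)$ of $\Exc(\mu)$ for an arbitrary $K_{Y}$-negative ray contraction $\mu$ over $\P^{1}$, using the integrality of $E\cdot F\cdot D$ and Corti's result (Lemma \ref{lem:6}) to eliminate the case $(x,y)=(0,1)$.

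Separately, in Part (3) your blow-up formula is incorrect: for the blow-up of a smooth threefold along a smooth curve $B$ one has $(-K_{Z})^{3}=(-K_{W})^{3}-2(-K_{W}).B+2g_{B}-2$, not $(-K_{W})^{3}-4(-K_{W}).B-2g_{B}+2$, and correspondingly $(-K_{Y})^{3}=(-K_{X})^{3}-2(-K_{X}).C-2$. With the formulas as you state them the linear system does not return the claimed closed-form expressions, so this step needs to be redone with the correct coefficients (the paper also uses the flop-invariance of $(-K)^{2}D$ and $(-K)D^{2}$, which you mention only in passing).
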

\begin{proof}
\begin{enumerate}
\item Assume $\Psi \colon Z \to \ol{Z}$ is a divisorial contraction. Set $D:=\Exc(\Psi)$. 
Let $F_{Z}$ be a general $p$-fiber and $F_{W}$ be a general $\pi$-fiber such that $\t(F_{Z})=F_{W}$. 
Now $\t|_{F_{Z}} \colon F_{Z} \to F_{W}\simeq \P^{1} \times \P^{1}$ is the blowing-up at reduced three points. 

Let us prove that $D|_{F_{Z}}$ is a disjoint union of $(-2)$-curves $l_{1},\ldots,l_{n}$ and $n \in \{1,2\}$. 
Let $e_{1},e_{2},e_{3}$ be exceptional curves of $\t|_{F_{Z}}$ and 
set $f_{1}:=(\t|_{F_{Z}})^{\ast}\mc O_{\P^{1} \times \P^{1}}(1,0)$ and $f_{2}:=(\t|_{F_{Z}})^{\ast}\mc O_{\P^{1} \times \P^{1}}(0,1)$. 
For every irreducible curve $l \sb F_{Z}$, 
$l=a_{1}f_{1}+a_{2}f_{2}-\sum_{j=1}^{3} b_{j}e_{j}$
holds for some $a_{i},b_{j} \in \Z_{\geq 0}$. 
If $l$ is contracted by $\t|_{F_{Z}}$, then $l$ is $(-2)$-curve and we have the following: 
\[0=2(a_{1}+a_{2})-\sum_{j=1}^{3}b_{j} \text{ and } -2=2a_{1}a_{2}-\sum_{j=1}^{3}b_{j}^{2}.\]
By the inequality 
\[2+2a_{1}a_{2}=\sum_{j=1}^{3}b_{j}^{2} \geq 3 \left( \frac{\sum_{j=1}^{3}b_{j}}{3} \right)^{2}=\frac{4}{3}(a_{1}+a_{2})^{2},\]
we have $(a_{1},a_{2})=(0,1)$ or $(1,0)$. 
Thus we have $(b_{1},b_{2},b_{3})=(1,0,0)$ or $(0,1,0)$ or $(0,0,1)$ 
and hence $l$ is the proper transform of a line passing through exactly two points of the three points. 
By our assumption, $-K_{F_{Z}}$ is nef and hence the three points are not collinear. 
Thus the number of lines passing through exactly two points of the three points is at most two. 
Moreover, if there exists two lines passing through two points of the three points, then the proper transforms of those do not meet since the two lines meet transversally at one point of the three point.

Denote $D \equiv x(-K_{Z})+yE+zF_{Z}$ with $x,y,z \in \Q$. 
Then $D|_{F_{Z}} \equiv x(-K_{F_{Z}})+y(E|_{F_{Z}})$ and hence we have the following: 
\begin{align*}
0&=(-K_{F_{Z}})(D|_{F_{Z}})=5x+3y \text{ and } \\
-2n&=(D|_{F_{Z}})^{2}=5x^{2}-3y^{2}+6xy. 
\end{align*}
Thus we have $x=\pm \frac{\sqrt{15n}}{10}$. It contradicts $x \in \Q$ and $n \in \{1,2\}$. Therefore, $\Psi$ is an isomorphism or a small contraction. 
\item 
For a general $p$-fiber $F_{Z}$ and the proper transform $F_{Y}:=\Phi_{\ast}F_{Z}$, 
the birational map $\Phi|_{F_{Z}} \colon F_{Z} \dra F_{Y}$ is an isomorphism. 
We abbreviate $F_{Y},F_{Z}$ to $F$ for simplicity. 

Let us prove that $\mu \colon Y \to X$ is the blowing-up along a non-singular curve $C$. 
Since $\rho(Y)=3$, then $\dim X \geq 2$. 
If $\dim X=2$, $X$ is isomorphic to a Hirzebruch surface $\F_{n}$ by Lemma \ref{lem:5}.
Set $D=g^{\ast}\mc O_{\F_{n}}(1)$ and $D \equiv x(-K_{Z})+yE+zF_{Z}$ with $x,y,z \in \Q$. Then we have the following equations: 
\begin{align*}
2&=(-K_{Y}).D.F=(-K_{F})D|_{F}=5x+3y \text{ and } \\
0&=D^{2}F=(D|_{F})^{2}=5x^{2}-3y^{2}+6xy.
\end{align*}
Hence $x=\frac{4 \pm \sqrt{6}}{10}$, $ y=\mp \frac{1}{\sqrt{6}}$ and it contradicts $x,y \in \Q$. 
Hence $\mu$ is a divisorial contraction. 
Set $D=\Exc(\mu)$. If $\mu(D)$ is a point, $D$ does not meet the general fibers $F$ of $Y \to \P^{1}$. 
Thus we have the following: 
\begin{align*}
0&=(-K_{Y}).D.F=(-K_{F})D|_{F}=5x+3y \text{ and } \\
0&=D^{2}F=(D|_{F})^{2}=5x^{2}-3y^{2}+6xy.
\end{align*}
Hence we have $x=y=0$ and $D=zF$. In particular, $D$ or $-D$ is nef which is impossible. 
Therefore, $\dim \mu(D)=1$ and $\mu$ is the blowing-up along the smooth curve $C:=\mu(D)$. 

Let $F_{X}$ denotes a general $\vp$-fiber. 
Set $D \equiv x(-K_{Z})+yE+zF$ with $x,y,z \in \Q$ and $m:=F_{X}.C \in \Z_{\geq 0}$. 
Since $(-K_{F_{Y}})^{2}=5$ and $(-K_{F_{X}})^{2} \leq 9$, we have $0 \leq m \leq 4$. 
By a calculation of some intersection numbers, we have the following: 
\begin{align*}
m&=(-K_{Y})DF=5x+3y, \\
-m&=D^{2}F=5x^{2}-3y^{2}+6xy \text{ and } \\
\Z &\ni 3x-3y=EFD. 
\end{align*}
By these equality, we have the following: 
\begin{align*}
&x=\frac{4m \pm \sqrt{6m^{2}+30m}}{20}, y=\mp \sqrt{\frac{m^{2}+5m}{24}} \text{ and } \\
&3x-3y=\frac{3m \pm \sqrt{24m(m+5)}}{32} \in \Z.
\end{align*}
Then the possibilities of $(m,x,y)$ are as follows: 
\[(m,x,y)=(0,0,0),\left(1,\frac{1}{2},-\frac{1}{2} \right) , (3,0,1).\]
It is impossible that $x=y=0$ as we have seen. 
If $(x,y)=(0,1)$, then we have $D|_{F}=E|_{F}$ for a general $F$ and hence the birational map $\psi \colon W \dra X$ is isomorphic in codimension 1. 
Since $F_{W}$ is birationally transformed into $F_{X}$ by the birational map $\psi \colon W \dra X$, $\psi$ is an isomorphism over $\P^{1}$ by Lemma \ref{lem:6}. 
Moreover, we have $\psi(C)=B$. 
In the case that $-K_{Z}$ is $p$-ample, it contradicts that the rays which corresponding $\t$, $\mu$ are different. 
In the case that $-K_{Z}$ is not $p$-ample, it contradicts that $\Psi \colon Y \dra Z$ is not an isomorphism over $\P^{1}$. 
Therefore, we obtain that $(m,x,y)=\left(1,\frac{1}{2},-\frac{1}{2} \right)$ and hence $(-K_{F_{X}})^{2}=6$. 
Therefore, $\vp|_{C} \colon C \to \P^{1}$ is isomorphic and $\vp \colon X \to \P^{1}$ is a del Pezzo fibration of degree $6$. 
\item Since $x=\frac{1}{2}$, $y=-\frac{1}{2}$, $(-K_{Z})^{3}=(-K_{Y})^{3}$, $(-K_{Z})^{2}D=(-K_{Y})^{2}D_{Y}$ and $(-K_{Z})D^{2}=(-K_{Y})D_{Y}^{2}$, we obtain the following: 
\begin{align*}
&(-K_{X})^{3}-2(-K_{X}).C-2=(-K_{W})^{3}-2(-K_{W}).B+(2g_{B}-2), \\
&(-K_{X}).C+2=\frac{(-K_{W})^{3}-2(-K_{W}).B+(2g_{B}-2) }{2}-\frac{(-K_{W}).B+2-2g_{B}}{2}+5z \text{ and } \\
&-2=\frac{(-K_{W})^{3}-2(-K_{W}).B+(2g_{B}-2)}{4} +\frac{2g_{B}-2}{4}-\frac{(-K_{W}).B+2-2g_{B}}{2}+2z.
\end{align*}
The last equalities in Proposition \ref{prop:1} are obtained by solving these equations. \qedhere
\end{enumerate}
\end{proof}
\subsection{Case (B-ii)}\label{ss:Bii}
In order to construct an example belonging to Case (B-ii), we consider the following lattice: 
\[\begin{pmatrix}
& H & F & B \\
H & 6 & 4 & 6 \\
F & 4 & 0 & 3 \\
B & 6 & 3 & 2
\end{pmatrix}.\]

This is an even lattice with signature $(1,2)$. By virtue of Theorem \ref{thm:NSL}, there exists a K3 surface $S$ such that $\Pic(S)=\Z \cdot [H] \oplus \Z \cdot [F] \oplus \Z \cdot [B]$ is isometry to this lattice. Moreover, we can assume that $H$ is nef and big. 

Set $C_{2}=H-F$ and $C_{4}=H+F-B$. 
Then we have $(C_{i})^{2}=-2$ and $H.C_{i}=i$ for $i \in \{2,4\}$. 
Since $H$ is nef, $C_{2},C_{4}$ are $(-2)$-curves. 

Let $C$ be a divisor of $S$. Then there exists $x,y,z \in \Z$ such that $C=xH+yF+zB$ and we have
\begin{align*}
C^{2}=6x^{2}+2z^{2}+8xy+12xz+6yz \text{ and } H.C=6x+4y+6z. 
\end{align*}
By solving these equations for $x,y$, we have the following: 
\begin{align*}
&x=\frac{2(H.C)-9z \pm \sqrt{4(H.C)^{2}-24C^{2}-87z^{2}}}{12} \text{ and } \\
&y=\frac{-3z \mp \sqrt{4(H.C)^{2}-24C^{2}-87z^{2}}}{8}.
\end{align*}
Hence $x,y,z \in \Z$ and $H.C$ is even, 
we can prove that the following (i)-(iv) by a straightforward calculation. 
\begin{itemize}
\item[(i)] It is impossible that $C^{2}=0$ and $H.C=1$. 
\item[(ii)] It is impossible that $C^{2}=0$ and $H.C=2$. 
\item[(iii)] It is impossible that $C^{2}=0$ and $H.C=3$. 
\item[(iv)] If $C^{2}=-2$ and $H.C \leq 9$, then $C=C_{2}$ or $C_{4}$. 
\end{itemize}

\begin{lem}\label{lem:Bii}
\begin{enumerate}
\item $H$ is very ample and $S$ is embedded to $\Q^{3}$ by $|H|$ as an anti-canonical member. 
\item The linear systems $|F|$ and $|B|$ have non-singular members and the general members meet transversely in three point. 
\item $2H+F-B$ is nef. 
\end{enumerate}
\end{lem}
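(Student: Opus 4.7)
The plan is to prove the three parts in sequence, each time reducing the claim to the combinatorial conditions (i)--(iv) established above together with the general K3 theorems already in hand.

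For (1), I would apply Theorem~\ref{thm:SDR}(2) to $L=H$. The three obstructions to very ampleness are ruled out respectively by: (iv), since every $(-2)$-class in sight has $H$-degree in $\{2,4\}$, in particular $>0$; by (i) and (ii), which forbid $D^{2}=0$ with $H.D\in\{1,2\}$; and by the arithmetic $H^{2}=6\neq 4\cdot 2$, ruling out $H\equiv 2D$ with $D^{2}=2$. Hence $H$ is very ample, giving a non-degenerate embedding $S\hookrightarrow \P^{4}$ of degree $6$. By Lemma~\ref{lem:2}, $S$ is the smooth complete intersection of a quadric $Q$ and a cubic, and the smoothness hypothesis for $Q$ is precisely the failure of a splitting $H=C_{1}+C_{2}$ with $C_{i}^{2}=0,\ H.C_{i}=3$, which is exactly~(iii). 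Thus $Q\simeq\Q^{3}$, and since $\mc O_{Q}(S)=\mc O_{Q}(3)=-K_{Q}$ by adjunction, $S$ is an anti-canonical member of $\Q^{3}$.

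For (2), I would first establish nefness of $F$: if $F.\Gamma<0$ for some irreducible $\Gamma$, then $\Gamma$ must be a fixed component of $|F|$, hence a $(-2)$-curve by Lemma~\ref{lem:1}; the inequality $H.\Gamma\leq H.F=4$ together with~(iv) forces $\Gamma\in\{C_{2},C_{4}\}$, but $F.C_{2}=4$ and $F.C_{4}=1$ give a contradiction. The analogous argument with $H.B=6$ handles $B$. Base-point-freeness of $F$ and $B$ should then follow from Theorem~\ref{thm:SDR}(1) after a Diophantine analysis of the classes $D=xH+yF+zB$ with $D^{2}=6x^{2}+8xy+12xz+6yz+2z^{2}=0$ satisfying $F.D=4x+3z=1$ or $B.D=6x+3y+2z=1$. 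Theorem~\ref{thm:SDR}(3) then yields non-singular members of $|F|$ and $|B|$, and transversality of a general pair $F'\in|F|,\ B'\in|B|$ meeting in $F.B=3$ points follows by restricting $|B|$ to the smooth curve $F'$ and applying Bertini.

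For (3), I would write $2H+F-B=H+C_{4}$. For an irreducible curve $\Gamma$ not contained in the support of any chosen effective representative of $C_{4}$ one has $C_{4}.\Gamma\geq 0$, hence $(H+C_{4}).\Gamma\geq H.\Gamma>0$ using the very ampleness of $H$ from~(1). If instead $\Gamma$ is such a component, then $\Gamma$ is a $(-2)$-curve with $H.\Gamma\leq H.C_{4}=4$, so~(iv) forces $\Gamma\in\{C_{2},C_{4}\}$; direct computation gives $(H+C_{4}).C_{2}=5$ and $(H+C_{4}).C_{4}=2$, both nonnegative. Hence $2H+F-B$ is nef.

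The main technical obstacle will be the base-point-freeness step in~(2): conditions (i)--(iii) constrain only classes of small $H$-degree, while Theorem~\ref{thm:SDR}(1) has to be verified against all of $\Pic(S)$, so it requires a genuine analysis of the quadratic form $D^{2}$ together with the linear conditions $F.D=1$ or $B.D=1$. The remaining work in~(1), (3), and the transversality portion of~(2) reduces rather mechanically to the tabulated combinatorics of the lattice.
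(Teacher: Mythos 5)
Parts (1) and (3) of your proposal are sound. Part (1) coincides with the paper's argument. For part (3) your decomposition $2H+F-B=H+C_{4}$ is a slightly cleaner route than the paper's (which shows $2H+F-B$ is effective, bounds its fixed part by $aC_{2}+bC_{4}$ via Lemma \ref{lem:1} and (iv), and checks $(2H+F-B).C_{2}=5$ and $(2H+F-B).C_{4}=2$); both come down to the same computations. Your nefness arguments for $F$ and $B$ in part (2) are essentially the paper's fixed-part analysis, and the Bertini argument for the three transverse intersection points is fine.

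The genuine gap sits exactly where you flagged ``the main technical obstacle'': base-point-freeness of $|F|$ cannot be deduced from condition (c) of Theorem \ref{thm:SDR}(1), because that condition fails here. The class $D=H+2F-B$ has $D^{2}=6+0+2+16-12-12=0$ and $F.D=4+0-3=1$, so your Diophantine analysis for $F$ does not close. This does not contradict the (true) base-point-freeness of $F$: the equivalence of (a) and (c) in Saint-Donat's criterion requires $L^{2}>0$, whereas $F^{2}=0$ (the paper's statement of Theorem \ref{thm:SDR}(1) elides this hypothesis, which is precisely why relying on (c) is dangerous here). The correct route, and the one the paper takes, is to show directly that the fixed part $N$ of $|F|$ vanishes: by Lemma \ref{lem:1} and $H.N<H.F=4$ together with (iv) one gets $N=aC_{2}$, and then $0\leq M.F=(F-aC_{2}).F=-4a$ forces $a=0$; the implication (b)$\Rightarrow$(a) of Theorem \ref{thm:SDR}(1) then gives global generation (for $F^{2}=0$ this implication is elementary, since a fixed-part-free pencil has at most $F^{2}=0$ base points), and primitivity of $F$ makes $|F|$ an elliptic pencil with non-singular general member. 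For $|B|$ the same fixed-part computation ($B=M+aC_{2}+bC_{4}$, $M^{2}\geq 0$ forcing $a=b=0$) works; there your route via (c) would in fact be legitimate since $B^{2}=2>0$, but the fixed-part argument is uniform and avoids the pitfall entirely.
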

\begin{proof}
\begin{enumerate}
\item By (i),(ii),(iv) and Theorem \ref{thm:SDR}, $H$ is very ample. 
By (iii) and Lemma \ref{lem:2}, $S$ is embedded in $\Q^{3}$ as an anti-canonical divisor. 
\item By Theorem \ref{thm:SDR}, it is enough to show that $|F|$ and $|B|$ are movable.
Let us prove that $|F|$ is movable. 
Since $H.F=4$ and $F^{2}=0$, $F$ is effective. 
Let $M$ be the movable part of $|F|$ and $N$ the fixed part. 
Then we have $4=H.F>H.N$. 
By Lemma \ref{lem:1} and (iv), we have $N=aC_{2}$ for some $a \in \Z_{\geq 0}$. Since $M$ is movable, we have $0 \leq M.F=(F-aC_{2}).F=-4a$ and hence $a=0$. 
Therefore, $F=M$ is movable. 

Let us prove that $|B|$ is movable. 
By the similar argument, there exists $a,b \in \Z_{\geq 0}$ such that $B=M+aC_{2}+bC_{4}$, where $M$ is the movable part of $|B|$. 
Since $M$ is movable, we have $0<M^{2}=2-2a^{2}-2b^{2}-6a-14b+6ab$ and it is easy to find that $(a,b)=(0,0)$. 
Hence $B=M$ is movable. 
\item Since $(2H+F-B)^{2}=12$ and $H.(2H+F-B)=10$, we obtain that $2H+F-B$ is effective. 
Let $M$ be the movable part and $N$ the fixed part of $2H+F-B$. 
Since $10=H.(2H+F-B)>H.N$ and Lemma \ref{lem:1} and (iv), there exists $a,b \in \Z_{\geq 0}$ such that $2H+F-B=M+aC_{2}+bC_{4}$. 
Since $(2H+F-B).C_{2}=5$ and $(2H+F-B).C_{4}=2$, $2H+F-B$ is nef. 
\qedhere
\end{enumerate}
\end{proof}
By Lemma \ref{lem:Bii}~(1), $S$ is a smooth anti-canonical member of $\Q^{3}$ and 
$C_{2} \sb S$ is a conic in $\Q^{3}$. 
Let $\s \colon W:=\Bl_{C_{2}}\Q^{3} \to \Q^{3}$ denotes the blowing-up of $\Q^{3}$ along $C_{2}$. 
It is well-known that $W$ is a Fano threefold and there exists a quadric fibration $\pi \colon W \to \P^{1}$ that is given by the complete linear system $|\s^{\ast}\mc O_{\Q^{3}}(1)-\Exc(\s)|$ \cite[No.29~of~Table 3]{MM81}. 
Note that $S_{W} \simeq S$ and $B_{W} \simeq B$. 
In particular, we can treat divisors of $S_{W}$ as if those were divisors of $S$. 
Then we obtain $\pi^{\ast}\mc O_{\P^{1}}(1)|_{S_{W}}=\mc O_{S}(F)$ for $F=H-C_{2}$ and thus $\deg(\pi|_{B_{W}} \colon B_{W} \to \P^{1})=3$. 
Let $Z:=\Bl_{B_{W}}W$ denotes the blowing-up of $W$ along $B_{W}$. 
Note that $S_{Z} \simeq S$ is a member of $|-K_{Z}|$ and $(-K_{Z})|_{S_{Z}}=\mc O_{S}(2H+F-B)$. 
The nefness of $-K_{Z}$ follows by Lemma \ref{lem:Bii}~(4). 
Moreover, we have $(-K_{Z})^{3}=12$ and hence $-K_{Z}$ is nef and big. 
By these arguments, $(\pi \colon W \to \P^{1},B)$ satisfies the condition $(\dag_{6})$ in Proposition \ref{prop:1}. 
Therefore, we obtain a del Pezzo fibration $\vp \colon X \to \P^{1}$ of degree 6 satisfying the following : 
\[(-K_{X})^{3}=14,\quad -K_{X}.C=0 \text{ and } D \equiv \frac{1}{2}(-K_{X})-\frac{1}{2}E-\frac{1}{2}F.\]
Since $Y$ is almost Fano and $-K_{X}.C=0$, $X$ is almost Fano. By TABLE \ref{table:A} and TABLE \ref{table:B}, $X$ belongs to Case (B-ii). 
By Lemma \ref{lem:3} and \ref{lem:4}, we have $h^{1,2}(X)=2$. \bbox
\subsection{Case (B-iii-1)}\label{ss:Biii1}
Our construction of an example of Case (B-iii-1) does not need Theorem \ref{thm:NSL}. 
Let $S_{0} \sb \P^{3}$ be a smooth cubic surface and $\e \colon S_{0} \to \P^{2}$ the blowing-up at 6 points in general position. 
Let $h$ (resp. $e_{1},\ldots,e_{6}$) denotes $\s^{\ast}\mc O_{\P^{2}}(1)$ (resp. $\e$-exceptional curves). 
Consider the following curves of $S_{0}$: 
\[B \in \left| 3h-\sum_{i=1}^{5} e_{i} \right| \text{ and } \G \in \left| 3h-\sum_{i=2}^{6} e_{i} \right|.\]
Note that general $B$ and $\G$ are degree 4 elliptic curves in $\P^{3}$ and meet transversely in 5 points. 

Let $\s \colon W:=\Bl_{\G}\P^{3} \to \P^{3}$ denotes the blowing-up of $\P^{3}$ along $\G$ and $G$ denotes the $\s$-exceptional divisor. 
It is well-known that 
$W$ is a Fano threefold and 
a morphism $\pi \colon W \to \P^{1}$ that is given by the complete linear system $|\s^{\ast}\mc O_{\P^{3}}(2)-\Exc(\s)|$ is 
a quadric fibration \cite[No.25~of~Table 3]{MM81}. 
Let $\t \colon Z:=\Bl_{B_{W}}W \to W$ denotes the blowing-up of $W$ along $B_{W}$. Set $E:=\Exc(\t)$ and $H:=f^{\ast}\s^{\ast}\mc O_{\P^{3}}(1)$. 
\begin{lem}
$-K_{Z}$ is nef and $(-K_{Z})^{3}=10$. 
\end{lem}
\begin{proof}
$(-K_{Z})^{3}=10$ is followed by a straightforward calculation. 
Note that $-K_{Z}=4H-G_{Z}-E$ and $S_{0,Z}=3H-G_{Z}-E$. 
Since $-K_{Z}=H+S_{0,Z}$ and $H$ is nef, it is enough to show that $-K_{Z}|_{S_{0,Z}}$ is nef. 
Under an identification of $S_{0,Z} \simeq S_{0}$, we have 
\begin{align*}\label{eq:Biii1}
(-K_{Z})|_{S_{0,Z}}=4(-K_{S_{0}})-\G-B=2(-K_{S_{0}})-(e_{1}+e_{6}).
\end{align*}
This divisor is nef and we are done. 
\end{proof}
By these arguments, $(\pi \colon W \to \P^{1},B)$ satisfies the condition $(\dag_{6})$ in Proposition \ref{prop:1}. 
Hence we obtain the diagram $(\bigstar_{6})$. 
Because of $(-K_{W})^{3}=32$, $(-K_{W}).B=11$ and $g_{B}=1$, we obtain a del Pezzo fibration $\vp \colon X \to \P^{1}$ of degree 6 satisfying the following: 
\[D \equiv \frac{1}{2}(-K_{Y})-\frac{1}{2}E_{Y}+\frac{1}{2}F_{Y},\quad (-K_{X})^{3}=12 \text{ and } -K_{X}.C=0\]
Since $Y$ is almost Fano and $-K_{X}.C=0$, $X$ is almost Fano. By TABLE \ref{table:A} and TABLE \ref{table:B}, $X$ belongs to Case (A-1) or Case (B-iii-1). 

Let us prove that $X$ belongs to Case (B-iii-1). 
Now we obtain that $F_{W}=2H-G$ and $-K_{Z}=4H-G_{Z}-E$, where $F_{W}$ is a general fiber of $\pi \colon W \to \P^{1}$. Hence we obtain 
\[D_{Z} \equiv \frac{1}{2}(4H-G_{Z}-E)-\frac{1}{2}E+\frac{1}{2}(2H-G_{Z})=3H-G_{Z}-E =S_{0,Z}.\]
It means that $D=\Exc(\mu)=S_{0,Y}$. 
\begin{lem}
The $K_{X}$-trivial ray contraction of $X$ is a small contraction. 
\end{lem}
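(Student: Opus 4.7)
The plan is to identify the $K_X$-trivial ray of $\ol{\NE}(X)$ with $[C]$ and to show that $C$ is a flopping curve in the Atiyah sense, which will immediately make $\psi$ a small contraction. Since $-K_X \cdot C = 0$ and $C \cdot F = 1$ ($C$ being a $\vp$-section), $[C]$ spans a ray of $\ol{\NE}(X)$ distinct from $[F]$; as $\rho(X) = 2$ and $X$ is almost Fano, this is the unique $K_X$-trivial ray and $\psi$ is its contraction. Adjunction on $C \simeq \P^1$ gives $\deg N_{C/X} = -2$, so writing $N_{C/X} \simeq \mc O(a) \oplus \mc O(b)$ with $a + b = -2$, the exceptional divisor $D = \Exc(\mu)$ equals the Hirzebruch surface $\F_{|a-b|}$, and it suffices to prove $D \simeq \F_0$.

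By the identification $D = S_{0,Y}$ already established, $D$ is the blow-down of $S_{0,Z} \simeq S_0$ along the flopping curves of $Z$ lying in $S_{0,Z}$. Using $(-K_Z)|_{S_{0,Z}} = 2(-K_{S_0}) - e_1 - e_6$, the irreducible $K_Z$-trivial curves contained in $S_{0,Z}$ are exactly the five pairwise disjoint $(-1)$-curves
\[
\ell_1 = h - e_1 - e_6, \qquad \ell_j = 2h - \sum_{i \neq j} e_i \quad (j = 2, 3, 4, 5),
\]
and the decomposition $-K_Z = H + S_{0,Z}$ with $H$ nef confines any other $K_Z$-trivial irreducible curve of $Z$ to $E \cup G_Z$, a possibility ruled out by a direct intersection computation on those exceptional divisors. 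Thus $S_{0,Y}$ is the contraction of these five disjoint $(-1)$-curves in $S_0$, a smooth rational surface of Picard rank $2$ with $K^2 = 8$, hence $\F_n$ for some $n \geq 0$.

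To conclude $n = 0$, I plan to rule out $n \geq 1$ by showing that $S_{0,Y}$ carries no irreducible curve of negative self-intersection. Enumerating the twenty-seven $(-1)$-curves of $S_0$, none is orthogonal to all of $\ell_1,\ldots,\ell_5$, so the sublattice $\pi^{\ast}\Pic(S_{0,Y}) \sb \Pic(S_0)$ contains no class of self-intersection $-1$; in particular $S_{0,Y}$ has no $(-1)$-curve, which excludes $n = 1$. For $n \geq 2$, the negative section of $\F_n$ would lift through $\pi$ to an irreducible effective curve on $S_0$ of self-intersection $\leq -2$, which is impossible since the six points on $\P^2$ defining $S_0$ lie in general position (for generic choice of $B$ and $\G$) and hence $S_0$ carries no irreducible curve of self-intersection at most $-2$. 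Therefore $D \simeq \F_0$, $N_{C/X} \simeq \mc O(-1) \oplus \mc O(-1)$, and $\psi$ is the Atiyah flop contraction of $C$, in particular small. The main technical hurdle lies in confirming the list of flopping curves on $Z$ is exhaustive, which reduces to the intersection-theoretic case analysis on $E$ and $G_Z$.
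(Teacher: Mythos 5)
There is a genuine gap, and it sits at the final inference: from $N_{C/X}\simeq \mc O(-1)\oplus\mc O(-1)$ (equivalently $D=\Exc(\mu)\simeq\F_0$) you conclude that the contraction $\psi$ of the $K_X$-trivial ray is small. That implication is false. The contraction of the ray contracts \emph{every} irreducible curve whose class lies on the ray, and it is divisorial exactly when those curves sweep out a divisor; the normal bundle of one particular contracted curve does not detect this. In fact, in Case (A-1) of this paper --- produced by the very same Proposition \ref{prop:1}, with the same invariants $(-K_X)^3=12$, $D_X\equiv -K_X-F$, and the same kind of $\vp$-section $C$ with $-K_X.C=0$ --- the curve $C$ lies inside the contracted divisor $D_X$ (since $D_X.C=(-K_X-F).C=-1<0$) as a component of a reducible fiber of the conic bundle $D_X\to\psi(D_X)$, and adjunction gives $N_{C/D_X}\simeq\mc O(-1)$ and $N_{D_X/X}|_C\simeq\mc O(-1)$, hence $N_{C/X}\simeq\mc O(-1)\oplus\mc O(-1)$ and $D\simeq\F_0$ there as well. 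So the invariant you compute takes the same value in the divisorial case and cannot distinguish (A-1) from (B-iii-1).

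This is fed by a second, concrete error: you assert that $K_Z$-trivial curves inside $E\cup G_Z$ are ``ruled out by a direct intersection computation,'' but they are not. For the fiber $l$ of $G_Z\to\G$ over any of the five points of $B\cap\G$ one has $H.l=0$, $G_Z.l=-1$, $E.l=1$, hence $-K_Z.l=(4H-G_Z-E).l=0$. These five curves are $K_Z$-trivial but not vertical over $\P^1$ (indeed $F_Z.l=1$), so they are not flopping curves of $\Phi$ and they survive to $X$ as $K_X$-trivial curves. They, not $C$, are the crux of the lemma: what must be shown is that the $K_X$-trivial curves form a finite set (these five together with $C$), which is what the paper's proof does by using $-K_Z=H+S_{0,Z}$ with $H$ nef to confine every $K_X$-trivial curve to a fiber of $E\to B$ or of $G\to\G$ and then counting. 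In Case (A-1) the analogous computation shows $-K_Z|_{G_Z}$ is pulled back from $\G$, so \emph{every} fiber of $G\to\G$ gives a $K_X$-trivial curve and $\psi$ is divisorial. Your enumeration of the five flopping curves inside $S_{0,Z}$ is correct and would be useful for describing $S_{0,Y}$, but redirecting the argument toward $N_{C/X}$ discards exactly the information that decides the lemma.
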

\begin{proof}
Let $\g \sb X$ be a integral curve such that $K_{X}.\g=0$ and $\g \neq C$. 
Since $-K_{Y}$ is nef, we have 
$0 \geq K_{Y}.\g_{Y}=(g^{\ast}K_{X}+D).\g_{Y}=D.\g_{Y}$. 
Since $\g \neq C$, $\g_{Y}$ is not contained in $D$. 
Thus $-K_{Y}.\g_{Y}=D.\g_{Y}=0$ holds. 
Since $\g_{Y}$ is not contracted by $Y \to \P^{1}$, $\g_{Y}$ is not a flopped curve of $\Phi \colon Z \dra Y$. 
In particular, we have $D_{Z}.\g_{Z} \geq 0$. 
On the other hand, we have $0=-K_{Z}.\g_{Z}=(H+D_{Z}).\g_{Z}$. 
Hence we have $H.\g_{Z}=D_{Z}.\g_{Z}=0$. 

Therefore, $\g_{Z}$ is a fiber of $G \to \G$ or a fiber of $E \to B$. 
Since $-K_{Z}|_{E}$ is relatively ample for $E \to B$, 
$\g_{Z}$ is a fiber of $G \to \G$. 
The fiber $l$ of $G \to \G$ satisfying $-K_{Z}.l=0$ is the fiber of one of the 5 points $\G \cap E$. Therefore, the $K_{X}$-trivial curves are exactly the proper transformations of these fibers and $C$. In particular, the number of the $K_{X}$-trivial curves is finite. 
\end{proof}
Hence $X$ belongs to Case (B-iii-1). 
By using Lemma \ref{lem:3} and \ref{lem:4}, we have 
$h^{1,2}(X)=3$. 
\bbox
\subsection{Case (A-1)}\label{ss:A1}
The construction of an example belonging to Case (A-1) is obtained by the slightly modified manners in Case (B-iii-1). 
Let $\G \sb S_{0}$, $\s \colon W \to \P^{3}$, $G=\Exc(\s)$ be as in $\S$ \ref{ss:Biii1}. 
Recall that $W$ has a quadric fibration structure $\pi \colon W \to \P^{1}$ and a general $\pi$-fiber $F_{W}$ is linearly equivalent to $\s^{\ast}\mc O_{\P^{3}}(2)-G$. 
The $\s$-exceptional divisor $G=\P(\mc E)$ has a $\P^{1}$-bundle structure $\s|_{G} \colon G \to \G$, where $\mc E=N_{\G}W^{\vee}$. Let $h=\mc O_{\P(\mc E)}(1)$ be a tautological bundle. 

Since $\G$ is an elliptic curve of degree 4, $\G$ is a complete intersection of two quadrics. 
Hence $\mc E \simeq \mc L_{1} \oplus \mc L_{2}$ for some line bundles $\mc L_{1},\mc L_{2}$ of degree $-8$. 
For an arbitrary line bundle $\mc M$ of degree $11$ on $\G$, $\mc L_{i} \otimes \mc M$ is very ample for $i \in \{1,2\}$ and hence $\mc E \otimes \mc M$ is globally generated. 
Therefore, the linear system $|h+(\s|_{G})^{\ast}\mc M|$ on $G=\P(\mc E)$ has a non-singular member $B$. 
Since $B$ is $(\s|_{G})$-section, $g_{B}=1$. Moreover, we have 
\[\s^{\ast}\mc O_{\P^{3}}(1).B=4, \quad G.B=5,\quad F_{W}.B=3 \text{ and } (-K_{W}).B=11.\]
Let $\t \colon Z:=\Bl_{B}W \to W$ be the blowing-up of $W$ along $B$, $E$ the $\t$-exceptional divisor. Set $H:=\t^{\ast}\s^{\ast}\mc O(1)$. 
Note that we have 
\[G_{Z}=\t^{\ast}G-E \text{ and } -K_{Z}=4H-\t^{\ast}G-E=4H-G_{Z}-2E.\]
\begin{lem}
$-K_{Z}$ is nef and $(-K_{Z})^{3}=10$. 
\end{lem}
\begin{proof}
By a straightforward calculation, it is easy to see that $(-K_{Z})^{3}=10$. 
Let us prove the nefness of $-K_{Z}$. 
Since $-K_{Z}=4H-G_{Z}-2E=2(2H-G_{Z}-E)+G_{Z}=2F_{Z}+G_{Z}$, 
it is enough to show the nefness of $-K_{Z}|_{G_{Z}}$. 
Since $G_{Z} \simeq G \simeq \P(\mc E)$, we can describe as follows: 
\[H|_{G}=(\s|_{G})^{\ast}\mc M_{4}, \quad G|_{G}=-h \text{ and } E|_{G_{Z}}=h+(\s|_{G})^{\ast}\mc M_{11}.\]
Here, $\mc M_{i}$ denotes some line bundle on $\G$ of degree $i$. 
In this notation, we have 
$(-K_{Z})|_{G_{Z}}=
(\s|_{G})^{\ast}\mc M_{5}$, which complete the proof. 
\end{proof}
Therefore, $(\pi \colon W \to \P^{1},B)$ satisfies the condition $(\dag_{6})$ in Proposition \ref{prop:1} and hence we obtain the diagram $(\bigstar_{6})$ and a del Pezzo fibration $\vp \colon X \to \P^{1}$ of degree 6 satisfying the following: 
\[D \equiv \frac{1}{2}(-K_{Y})-\frac{1}{2}E_{Y}+\frac{1}{2}F_{Y},\quad (-K_{X})^{3}=12 \text{ and } -K_{X}.C=0\]
Since $Y$ is almost Fano and $-K_{X}.C=0$, $X$ is almost Fano. 
Therefore, $X$ belongs to Case (A-1) or Case (B-iii-1). 
\begin{lem}
The $K_{X}$-trivial ray contraction of $X$ is crepant. 
\end{lem}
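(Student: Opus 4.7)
The plan is to mirror the argument in the lemma of Subsection \ref{ss:Biii1}, but to exploit the fact that in Case (A-1) the curve $B$ is a section of $\s|_G \colon G \to \G$ rather than a curve meeting $\G$ transversally at finitely many points. Consequently, in place of the finite collection of $K_X$-trivial curves found in Case (B-iii-1), one obtains an entire one-parameter family parametrized by $\G$, forcing the $K_X$-trivial ray contraction to be divisorial and hence crepant.

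First I would reprove the characterization of $K_X$-trivial curves almost verbatim. For an integral curve $\g \sb X$ with $K_X.\g = 0$ and $\g \neq C$, the nefness of $-K_Y$ together with $K_Y = \mu^\ast K_X + D$ gives $D.\g_Y = 0$ and $-K_Y.\g_Y = 0$; since $\g_Y$ is not contracted by $q \colon Y \to \P^1$, it avoids the flopping locus of $\Phi$, so $D_Z.\g_Z \geq 0$. The same linear equivalence $-K_Z = H + D_Z$ as in Case (B-iii-1) (arising from $F_Z = 2H - G_Z - E$ together with the formula $D_Z \equiv \tfrac{1}{2}(-K_Z) - \tfrac{1}{2}E + \tfrac{1}{2}F_Z$) then forces $H.\g_Z = D_Z.\g_Z = 0$, so $\g_Z$ is a fiber either of the projection $G_Z \to \G$ or of $E \to B$. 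The latter is excluded because $-K_Z|_E$ is $\t|_E$-ample.

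Next I would verify that every fiber $\ell$ of $\s|_G \colon G \to \G$ yields a $K_Z$-trivial curve, which is where Case (A-1) diverges crucially from Case (B-iii-1). Since $B \in |h + (\s|_G)^\ast \mc M|$ is a section, one has $B \cdot \ell = 1$ on $G$, so the proper transform $\ell_Z$ satisfies $E.\ell_Z = 1$; combined with $H.\ell_Z = 0$ (as $\ell$ collapses under $\s$) and $G_Z.\ell_Z = G.\ell - E.\ell_Z = -1 - 1 = -2$, this produces
\[
-K_Z.\ell_Z = 4\,H.\ell_Z - G_Z.\ell_Z - 2\,E.\ell_Z = 0 + 2 - 2 = 0.
\]
For generic $p \in \G$, the curve $\ell_Z$ lies outside both the flopping locus of $\Phi$ and the divisor $D_Z$, so its image $\ell_X := (\mu \circ \Phi)_\ast \ell_Z \subset X$ is a genuine curve with $-K_X.\ell_X = 0$.

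The resulting family $\{\ell_X\}_{p \in \G}$ is a one-parameter family of $K_X$-trivial curves, sweeping out a divisor in $X$, namely the proper transform of the $\s$-exceptional divisor $G$. Hence the $K_X$-trivial ray contraction of $X$ has a divisor in its exceptional locus, so it is divisorial and automatically crepant since it contracts a $K$-trivial ray. The main bookkeeping obstacle is to confirm that neither the flopping locus of $\Phi$ nor the divisor $D_Z$ absorbs the entire family $\{\ell_Z\}_{p \in \G}$; this is not a problem because the flopping locus has dimension at most one while the $\ell_Z$'s sweep out a two-dimensional subset of $Z$, and $D_Z$ is a single divisor distinct from the proper transform of $G$, so for generic $p \in \G$ both avoidances hold.
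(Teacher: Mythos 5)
Your proposal is correct and follows essentially the same route as the paper: exhibit the fibers of $G=\P(\mc E)\to\G$ as a one-parameter family of $K_{Z}$-trivial curves surviving to $X$ (via $-K_{Z}.\ell_{Z}=0$, avoidance of the flopping locus, and $D.\ell_{Y}=0$ resp.\ $\ell_{Y}\not\sb D$), so that infinitely many $K_{X}$-trivial curves force the contraction to be divisorial, hence crepant. Your first paragraph, which re-runs the Case (B-iii-1) classification of all $K_{X}$-trivial curves, is logically superfluous for this conclusion; the paper dispenses with it and only verifies the existence of the family.
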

\begin{proof}
For the general fiber $f$ of $G=\P(\mc E) \to \G$, we have $-K_{Z}.f_{Z}=0$. 
Since $F_{Z}.f_{Z}=\t^{\ast}(2H-G).f_{Z}=1$, $f_{Z}$ is not contracted by $p \colon Z \to \P^{1}$. 
Since general $f_{Z}$ does not meet the flopping curves of $\Phi$, we have $-K_{Y}.f_{Y}=0$. 
Since $D$ is linearly equivalent to $3H-G-E$, $D.f_{Y}=0$ and hence $f_{Y}$ is not contracted by $\mu \colon Y \to X$. 
Therefore, $\mu(f_{Y})=f_{X}$ is a curve such that $-K_{X}.f_{X}=0$ and this argument tells us that $X$ has 
infinitely many $K_{X}$-trivial curves. 
\end{proof}
Hence $X$ belongs to Case (A-1). By similarly arguments in $\S$ \ref{ss:Biii1}, we obtain $h^{1,2}(X)=2$. \bbox

\subsection{Case (B-iii-2)}\label{ss:Biii2}
In order to construct an example belonging to Case (B-iii-2), we consider the following lattice: 
\[\begin{pmatrix}
&H&\G&B \\
H &6&2&9 \\
\G&2&-2&6 \\
B&9&6&6
\end{pmatrix}.\]

This is an even lattice with signature $(1,2)$. By virtue of Theorem \ref{thm:NSL}, there exists a K3 surface $S$ such that $\Pic(S)=\Z \cdot [H] \oplus \Z \cdot [F] \oplus \Z \cdot [B]$ is isometry to this lattice. Moreover, we can assume that $H$ is nef and big. 

Set $C_{2}:=\G$ and $C_{5}:=3H-2\G-B$. 
Note that $C_{i}$ is a $(-2)$-curve since $(C_{i})^{2}=-2$ and $H.C_{i}=i>0$. 

Let $C=xH+y\G+zB$ be a divisor on $S$ with $x,y,z \in \Z$. Then we have 
\[C^{2}=6x^{2}-2y^{2}+6z^{2}+4xy+18xz+12yz \text{ and }H.C=6x+2y+9z.\]
By solving these equations for $x$ and $y$, we obtain the following: 
\begin{align*}
&x=\frac{4H.C-45z \pm \sqrt{4(H.C)^{2}-24C^{2}-99z^{2}}}{24} \text{ and }\\
&y=\frac{9z-\sqrt{4(H.C)^{2}-24C^{2}-99z^{2}}}{8}.
\end{align*}
Hence $x,y,z \in \Z$, 
we can prove that the following (i)-(iv) by a straightforward calculation. 
\begin{itemize}
\item[(i)] It is impossible that $C^{2}=0$ and $H.C=1$. 
\item[(ii)] It is impossible that $C^{2}=0$ and $H.C=2$. 
\item[(iii)] It is impossible that $C^{2}=0$ and $H.C=3$. 
\item[(iv)] If $C^{2}=-2$ and $H.C \leq 10$, then $C=C_{2}$ or $C_{5}$. 
\end{itemize}
\begin{lem}\label{lem:Biii2}
\begin{enumerate}
\item $H$ is very ample and $S$ is embedded to $\Q^{3}$ by $|H|$ as an anti-canonical member. 
\item The general member of $|B|$ is non-singular.
\item $3H-\G-B$ is nef. 
\end{enumerate}
\end{lem}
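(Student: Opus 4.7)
The plan is to follow the template of the proof of Lemma \ref{lem:Bii}, using Theorem \ref{thm:SDR}, Lemma \ref{lem:1}, Lemma \ref{lem:2}, and the numerical facts (i)--(iv) established in the paragraph preceding the lemma statement.

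For (1), I apply Theorem \ref{thm:SDR}(2) to $H$. The three forbidden configurations are ruled out as follows. A $(-2)$-class $D$ with $H.D=0$ would contradict (iv), since $H.C_2=2$ and $H.C_5=5$. A class $D$ with $D^2=0$ and $H.D\in\{1,2\}$ is excluded by (i) and (ii). The condition $H\equiv 2D$ with $D^2=2$ would force $H^2=8$, while $H^2=6$. Thus $H$ is very ample, and combining (iii) with Lemma \ref{lem:2} shows that $|H|$ realises $S$ as an anti-canonical member of a smooth quadric $\Q^3$.

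For (2), note that $B^2=6>0$ and $H.B=9$, so $B$ is effective by Riemann--Roch. Write $B=M+N$ where $N$ is the fixed part of $|B|$. Lemma \ref{lem:1}, together with the bound $H.N\leq H.B=9<10$ and (iv), forces $N=aC_2+bC_5$ for some $a,b\in\Z_{\geq 0}$. For a non-zero movable $M$ one has $H.M>0$ and $M^2\geq 0$ (the latter holds because two general members of $|M|$ share no components). A finite case check over pairs $(a,b)$ with $2a+5b\leq 8$, using the intersection numbers $B.C_2=6$, $B.C_5=9$, $C_2.C_5=4$, shows that $M^2<0$ whenever $(a,b)\neq (0,0)$. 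Hence $N=0$ and $B$ itself is movable, and in particular nef. Theorem \ref{thm:SDR}(1) then gives base point freeness of $B$, and Theorem \ref{thm:SDR}(3) combined with Bertini produces a non-singular general member.

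For (3), a direct lattice computation gives $(3H-\G-B)^2=4$ and $H.(3H-\G-B)=7$, so $3H-\G-B$ is effective. As in (2), Lemma \ref{lem:1}, the inequality $H.(3H-\G-B)=7<10$, and (iv) show that any fixed component of the linear system is either $C_2$ or $C_5$. One then verifies in the lattice that $(3H-\G-B).C_2=2$ and $(3H-\G-B).C_5=2$. Since the movable part is nef and since any irreducible curve distinct from $C_2,C_5$ has non-negative intersection with the fixed part, this yields the nefness of $3H-\G-B$ exactly as in the proof of Lemma \ref{lem:Bii}(3).

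The main obstacle is the arithmetic in (2): one must confirm that every combination $N=aC_2+bC_5$ with $(a,b)\neq(0,0)$ violates $M^2\geq 0$, given the new intersection form. No conceptual difficulty appears beyond what is handled in Lemma \ref{lem:Bii}; the work consists of running the numbers carefully for the present lattice.
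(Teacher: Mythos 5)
Your proposal is correct and follows essentially the same route as the paper: Theorem \ref{thm:SDR} together with (i)--(iv) and Lemma \ref{lem:2} for (1), and the movable/fixed-part decomposition $B=M+aC_{2}+bC_{5}$ with the check that $M^{2}=6-2a^{2}-2b^{2}-12a-18b+8ab\geq 0$ forces $(a,b)=(0,0)$ for (2). The only cosmetic difference is in (3), where the paper simply observes $3H-\G-B=C_{2}+C_{5}$ and concludes nefness at once from $C_{2}.C_{5}=4$, whereas you reach the same intersection numbers $(3H-\G-B).C_{2}=(3H-\G-B).C_{5}=2$ via effectivity and a fixed-part argument.
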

\begin{proof}
\begin{enumerate}
\item It can be proved similarly to Lemma \ref{lem:Bii}~(1). 
\item 
By Theorem \ref{thm:SDR}, it is enough to show that $|B|$ is movable. 
Let $M$ be the movable part of $|B|$ and $N$ the fixed part. 
Then we have $9=H.B>H.N$. 
Due to Lemma \ref{lem:1} and (iv), there exists $a,b \in \Z_{\geq 0}$ such that $B=M+aC_{2}+bC_{5}$. 
Thus we have $M^{2}=6-2a^{2}-2b^{2}-12a-18b+8ab \geq 0$. 
It is easy to find that $(a,b)=(0,0)$ is the only non-negative integer solution of this inequality. 
Therefore, $|B|=M$ is movable.  
\item $3H-\G-B=C_{2}+C_{5}$ is nef since $C_{2}.C_{5}=4$. \qedhere
\end{enumerate}
\end{proof}

By Lemma \ref{lem:Biii2}~(1), $S$ is embedded in $\Q^{3}$ as an anti-canonical member. 
Let $\s \colon W = \Bl_{\G}\Q^{3} \to \Q^{3}$ be the blowing-up of $\Q^{3}$ along $\G$ and $G:=\Exc(\s)$ a $\s$-exceptional divisor. 
As we have seen, there exists a quadric fibration $\pi \colon W \to \P^{1}$ given by $F_{W}:=\s^{\ast}\mc O_{\Q^{3}}(1)-G$. 

In what follows, we regard divisors of $S_{W}$ as those of $S$ because of $S_{W} \simeq S$. 
Then we have $\mc O_{S_{W}}(F_{W})=H-\G$ and hence $(F_{W}.B)_{W}=((H-\G).B)_{S}=3$. 
Let $Z:=\Bl_{B_{W}}W$ denotes the blowing-up of $W$ along $B_{W}$. 
Then we have $(-K_{Z})^{3}=4$ by a straightforward calculation. 
By Lemma \ref{lem:Biii2}~(3), $-K_{Z}$ is nef. 
Therefore, the pair $(\pi \colon W \to \P^{1},B)$ satisfies the condition $(\dag_{6})$ in Proposition \ref{prop:1}. 
Thus we obtain a del Pezzo fibration $\vp \colon X \to \P^{1}$ of degree 6 satisfying the following: 
\[(-K_{X})^{3}=6,\quad -K_{X}.C=0 \text{ and } D \equiv \frac{1}{2}(-K_{Y})-\frac{1}{2}E_{Y}+\frac{3}{2}F_{Y}.\]
Since $Y$ is almost Fano and $-K_{X}.C=0$, $X$ is almost Fano.
By Lemma \ref{lem:3} and \ref{lem:4}, we have $h^{1,2}(X)=4$. 
Therefore, $X$ does not belong to Case (B-i-2). Thus $X$ belongs to Case (B-iii-2). \bbox
\subsection{Case (B-iii-3)}\label{ss:Biii3}
In order to construct an example belonging to Case (B-iii-3), we consider the following lattice:
\[\begin{pmatrix}
&H_{\a}&H_{\b}&F&B \\
H_{\a}&0&2&2&0 \\
H_{\b}&2&0&2&3 \\
F&2&2&0&3 \\
B &0&3&3&-2 
\end{pmatrix}.\] 

This is an even lattice with signature $(1,3)$ and hence there exists a K3 surface $S$ such that $\Pic(S)$ is isometry to this lattice by Theorem \ref{thm:NSL}. Moreover, we may assume that $H:=H_{\a}+H_{\b}$ is nef and big. 

Set $C_{1}:=2H_{\a}-B$, $C_{1}':=-H_{\a}+B$, $C_{3}:=B$ and $C_{3}':=3H_{\a}-B$. Then we have $C_{i},C_{i}'$ are $(-2)$-curves with $H.C_{i}=H.C_{i}'=i$ for $i \in \{1,3\}$. 

Let $C=xH_{\a}+yH_{\b}+zF+wB$ be a divisor on $S$ with $x,y,z,w \in \Z$. Then we have 
\[C^{2}=-2w^{2}+4xy+4xz+4yz+6yw+6zw \text{ and } H.C=2x+2y+4z+3w. \]
By solving these for $x$ and $y$, we have the following: 
\begin{align*}
x&=\frac{(H.C)-4z-6w \pm \sqrt{(H.C)^{2}-4C^{2}-8w^{2}-16z^{2}}}{4} \text{ and } \\
y&=\frac{(H.C)-4z \mp \sqrt{(H.C)^{2}-4C^{2}-8w^{2}-16z^{2}}}{4}
\end{align*}
Hence the following (i) and (ii) holds by a straightforward calculation.
\begin{itemize}
\item[(i)] It is impossible that $C^{2}=0$ and $H.C=1$.
\item[(ii)] If $C^{2}=-2$ and $H.C \leq 4$, then $C=C_{1}$, $C_{1}'$, $C_{3}$ or $C_{3}'$. 
\end{itemize}
\begin{lem}\label{lem:Biii3}
\begin{enumerate}
\item $H$ is base point free and ample but not very ample. 
\item $F$ is base point free. 
\item $H+F$ is very ample. 
\item Let $f \colon S \to \P^{3}$ be a morphism given by $|H|$. Then the image $f(S)$ is non-singular quadric surface. 
\item Let $p \colon S \to \P^{1}$ be a morphism given by $|F|$. 
Then the composition of $f \times p \colon S \to \P^{3} \times \P^{1}$ and the segre embedding $\P^{3} \times \P^{1} \to \P^{7}$ is exactly given by $|H+F|$. 
In particular, $f \times p \colon S \to \P^{3} \times \P^{1}$ is closed embedding. 
\item There exists non-singular quadric $Q \sb \P^{3}$ such that $S \sb Q \times \P^{1} \sb \P^{3} \times \P^{1}$. 
\item $2H-B$ is nef. 
\end{enumerate}
\end{lem}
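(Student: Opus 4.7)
The plan is to verify (1)--(7) in order using lattice computations in $\Pic(S)$, the characterizations of Theorem \ref{thm:SDR}, and the auxiliary facts (i), (ii) recorded just above the lemma statement.

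For (1), base point freeness of $H$ is immediate from Theorem \ref{thm:SDR}(1) and (i). Ampleness follows by showing that $C^2 = -2$ together with $H \cdot C = 0$ admits no integer solution: substituting into the displayed formulas for $x, y$ and enumerating the few $(z, w)$ for which the discriminant is non-negative leads to a direct contradiction. Non-very-ampleness is witnessed by $D = H_\alpha$, with $D^2 = 0$ and $H \cdot D = 2$, satisfying condition (b)(ii) of Theorem \ref{thm:SDR}(2).

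For (2), effectivity of $F$ follows from Riemann-Roch. To establish nefness, I note that any irreducible $(-2)$-curve $C$ with $F \cdot C < 0$ must be a component of an effective representative of $F$, forcing $H \cdot C \leq H \cdot F = 4$; by (ii) such a $C$ lies in $\{C_1, C_1', C_3, C_3'\}$, and direct computation gives $F \cdot C \in \{1, 3\}$, all positive. Base point freeness then follows from Theorem \ref{thm:SDR}(1) and (i). For (3), $(H+F)^2 = 12 \geq 4$, and the three conditions of Theorem \ref{thm:SDR}(2)(b) are ruled out by a parity-plus-lattice analysis: $(H+F) \cdot D = 4x + 4y + 4z + 6w$ is always even so the value $1$ is impossible, the value $2$ reduces after elimination of $x$ via $D^2 = 0$ to an inequality in $y, z, w$ admitting no integer solutions, and $H + F \equiv 2D$ is excluded because $(H+F)^2 = 12$ is not four times an integer square.

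For (4)--(6), I exploit that $H_\alpha$ and $H_\beta$ are effective and nef with self-intersection zero (nefness by the same component argument as in (2), using (ii)), so each defines a morphism $S \to \P^1$ by Theorem \ref{thm:SDR}(1); their product factors through $\P^1 \times \P^1$ embedded as a smooth quadric $Q \subset \P^3$ via Segre, and the pullback of $\mathcal{O}_{\P^3}(1)$ is $H_\alpha + H_\beta = H$, so $f$ factors through $Q$, proving (4). For (5), the dimensions $h^0(H) h^0(F) = 4 \cdot 2 = 8 = h^0(H + F)$, together with surjectivity of the multiplication map $H^0(H) \otimes H^0(F) \to H^0(H + F)$, force the Segre composition to coincide with the morphism defined by $|H + F|$, which is a closed embedding by (3). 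Part (6) is immediate from (4) and (5).

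For (7), the essential step is the decomposition $2H - B = 2 H_\beta + C_1$ with $C_1 = 2 H_\alpha - B$ a $(-2)$-curve. Since $H_\beta$ is nef, for any irreducible curve $C \neq C_1$ both $H_\beta \cdot C \geq 0$ and $C_1 \cdot C \geq 0$, while for $C = C_1$ we compute $(2 H_\beta + C_1) \cdot C_1 = 2 \cdot 1 + (-2) = 0$. Hence $2H - B$ is nef. The principal obstacle throughout is verifying nefness of auxiliary divisors such as $F$, $H_\beta$ and $2H - B$ without a complete enumeration of $(-2)$-curves on $S$; the uniform device is the component-bound on $H$-degree, which reduces each such check to the finite list in (ii).
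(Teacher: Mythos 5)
Your overall strategy coincides with the paper's (lattice arithmetic, the Saint-Donat criteria, and the finite list (ii) of low-degree $(-2)$-classes), and parts (1), (3), (6) and the various nefness computations are essentially as in the paper. However, there is a genuine error in part (2). You deduce base point freeness of $F$ from ``Theorem \ref{thm:SDR}(1) and (i)'', but (i) is a statement about $H$, not about $F$, and the criterion you would actually need --- that there is no $D$ with $D^{2}=0$ and $F.D=1$ --- is \emph{false} for this lattice: the class $D=-H_{\b}+4F+B$ satisfies $D^{2}=0$, $F.D=1$ and $H.D=17>0$, so $D$ is even effective. This does not contradict the lemma; it reflects the fact that the equivalence (a)$\Leftrightarrow$(c) in Theorem \ref{thm:SDR}(1) is the Saint-Donat criterion for \emph{big} nef classes and fails for isotropic ones. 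The paper therefore proves (2) by showing $|F|$ is fixed-part free and invoking (b)$\Rightarrow$(a). Your own nefness computation repairs the gap: writing $F=M+N$ with $N$ the fixed part, nefness of $F$ and $F^{2}=0$ force $F.N=0$, while every candidate component of $N$ (an irreducible $(-2)$-curve of $H$-degree at most $4$, hence one of $C_{1},C_{1}',C_{3},C_{3}'$ by (ii)) has $F$-degree $1$ or $3$; so $N=0$. You should also record the check of condition (b)(i) of Theorem \ref{thm:SDR}(2) for $H+F$ (no $D$ with $D^{2}=-2$ and $(H+F).D=0$), which your parity argument does not cover but which follows at once from $H$ ample and $F$ nef.

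Where you genuinely diverge from the paper is in (4), (5) and (7). For (4) the paper argues that $f$ is $2{:}1$ onto a quadric and excludes the cone by noting that a ruling would pull back to a Cartier divisor $D$ with $H=2D$, contradicting primitivity of $H$; you instead exhibit the two rulings directly via the pencils $|H_{\a}|$ and $|H_{\b}|$. That is a legitimate alternative, but the step ``the pullback of $\mc O_{\P^{3}}(1)$ is $H$, so $f$ factors through $Q$'' needs the extra observation that the product map is given by the \emph{complete} system $|H|$; this follows from $h^{0}(\P^{1}\times\P^{1},\mc O(1,1))=4=h^{0}(S,H)$ together with injectivity of pullback along the surjection onto $\P^{1}\times\P^{1}$. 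Likewise in (5) the surjectivity of $H^{0}(H)\otimes H^{0}(F)\to H^{0}(H+F)$ is exactly the content of the statement and should not be listed as an input (the paper dismisses (5) as clear, so you are no worse off, but be aware the burden sits there). Your proof of (7) via the decomposition $2H-B=2H_{\b}+C_{1}$ and nefness of $H_{\b}$ is cleaner than the paper's fixed-part analysis and is correct, provided you note that $C_{1}$ is irreducible, which follows from $H.C_{1}=1$ with $H$ ample.
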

\begin{proof}
\begin{enumerate}
\item Since (i),(ii) and Theorem \ref{thm:SDR}, $H$ is base point free and ample. 
Since $H.H_{\a}=2$ and $H_{\a}^{2}=0$, $H$ is not very ample by Theorem \ref{thm:SDR}. 
\item 
It is enough to show that $|F|$ is movable. 
Let $M$ be the movable part of $|F|$ and $N$ the fixed part. 
Then we have $4=H.F>H.N$. 
By Lemma \ref{lem:1} and (iv), there exists $a,b,c,d \in \Z_{\geq 0}$ such that $F=M+aC_{1}+bC_{1}'+cC_{3}+dC_{3}'$. 
Since $H.M > 0$ and $M^{2}>0$, we have $a=b=0$ and hence $F=M$ is movable. 
\item By using Theorem \ref{thm:SDR}, the very ampleness of $H+F$ can be obtained by the same manner as we have seen. 
\item $f \colon S \to \P^{3}$ is not closed embedding but finite morphism onto the image. 
Since $f$ is given by the complete linear system $|H|$, $Q:=f(S)$ is an irreducible and reduced quadric surface and $S \to Q$ is $2:1$. 
Assume $Q$ is a singular cone with a vertex $o \in Q$. 
Then a line $l$ of $Q$ is a non-Cartier Weil divisor and the closure $\ol{f^{-1}(l \setminus \{o\})}=D$ is a Cartier divisor of $S$. 
Since $f^{\ast}(2l)=H$, we have $H=2D$ and it contradicts that $H$ is a generator of $\Pic(S)$. Thus $Q$ is non-singular. 
\item It is clear. 
\item It is clear from (4) and (5). 
\item Let us prove the nefness of $2H-B$. 
Since $H.(2H-B)=5$ and $(2H-B)^{2}=2$, we have $2H-B$ is effective. 
Let $M$ be the movable part of $|2H-B|$ and $N$ the fixed part. 
Since $M \neq 0$ and $H$ is ample, we have $H.N < 5$. 
By Lemma \ref{lem:1} and (iv), there exists $a,b,c,d \in \Z_{\geq 0}$ such that $F=M+aC_{1}+bC_{1}'+cC_{3}+dC_{3}'$. 
Since $(2H-B).C_{1}=0$, $(2H-B).C_{1}'=4$, $(2H-B).C_{3}=8$ and $(2H-B).C_{3}'=4$, we have that $2H-B$ is nef. \qedhere
\end{enumerate}
\end{proof}
\begin{lem}
Set $H_{\xi}=\mc O_{\P^{3} \times \P^{1}}(1,0)$ and $H_{\eta}=\mc O_{\P^{3} \times \P^{1}}(0,1)$. Then there exists a non-singular member $W \in |2H_{\xi} + 2H_{\eta}|$ which contains $S$. 
\end{lem}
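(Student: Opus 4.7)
My plan is to apply Bertini's theorem to the linear system $\Lambda := |\mc I_S \otimes \mc O(2,2)|$ of $(2,2)$-divisors in $\P^3 \times \P^1$ containing $S$. The first step is to describe $\Lambda$ explicitly. By Lemma \ref{lem:Biii3}~(6), $S$ is contained in the smooth threefold $Q \times \P^1 \in |2H_\xi|$, and $Q \times \P^1 \cong \P^1 \times \P^1 \times \P^1$ since $Q \subset \P^3$ is a smooth quadric. Because $S$ is a K3 surface, the adjunction formula forces $S$ to be a $(2,2,2)$-divisor in this product. The exact sequence
\[0 \to \mc O_{\P^3\times\P^1}(0,2) \to \mc I_S(2,2) \to \mc I_{S/Q\times\P^1}(2,2) \to 0,\]
the identification $\mc I_{S/Q\times\P^1}(2,2) \cong \mc O_{Q\times\P^1}$, and the surjectivity of the restriction map $H^0(\P^3\times\P^1, \mc O(2,2)) \to H^0(Q\times\P^1, \mc O(2,2,2))$ (shown by the dimension count $30 = 27 + 3$) together yield $h^0(\mc I_S(2,2)) = 4$. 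Consequently, every member of $\Lambda$ has the form $\lambda f + q g$, where $q$ is the $(2,0)$-form defining $Q\times\P^1$, $g \in H^0(\P^1,\mc O(2))$, $\lambda \in \C$, and $f$ is a fixed $(2,2)$-form whose restriction to $Q\times\P^1$ cuts out $S$.

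Next I would compute the base locus of $\Lambda$. If $p$ is a base point, varying $g$ forces $q(p) g(p) = 0$ for all $g \in H^0(\P^1, \mc O(2))$, hence $q(p) = 0$, so $p \in Q \times \P^1$; and then $f(p) = 0$ forces $p \in S$. Thus the base locus equals $S$ set-theoretically, and Bertini's theorem guarantees that a general member $W \in \Lambda$ is smooth on $\P^3\times\P^1 \setminus S$.

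Finally, I would verify smoothness along $S$ directly. Fix $W = \{f + q g = 0\}$ (with $\lambda = 1$) and $p \in S$. Since $q$ vanishes identically on $Q\times\P^1$, one has $q(p) = 0$ and $dq_p|_{T_p(Q\times\P^1)} = 0$, so
\[d(f + q g)_p |_{T_p(Q\times\P^1)} = df_p |_{T_p(Q\times\P^1)},\]
which is precisely the differential at $p$ of the equation defining the smooth K3 surface $S$ inside the smooth threefold $Q\times\P^1$. This is nonzero at every $p \in S$, so $W$ is smooth along $S$ for every choice of $g$. Combined with the previous step, a general $W \in \Lambda$ is smooth everywhere. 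The only nontrivial ingredient is the claim in the first step that the equation of $S$ in $Q\times\P^1$ lifts to a $(2,2)$-form on $\P^3\times\P^1$; but this is exactly what the surjectivity of the restriction $H^0(\mc O(2,2)) \to H^0(\mc O_{Q \times \P^1}(2,2,2))$ provides, and the rest of the argument is then routine.
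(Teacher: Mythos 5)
Your argument is correct and is essentially the paper's own proof: both compute $h^{0}\bigl(\mc I_{S}\otimes\mc O(2H_{\xi}+2H_{\eta})\bigr)=4$, reduce the base-locus computation to $R=Q\times\P^{1}$ (where the restricted system consists of the single divisor $S$, since $\mc I_{S/R}\otimes\mc O(2,2,2)\cong\mc O_{R}$), apply Bertini off $S$, and deduce smoothness along $S$ from the fact that a general $W$ cuts out exactly the smooth surface $S$ on the smooth threefold $R$ --- the paper phrases this as ``$S=W\cap R$ is a non-singular Cartier divisor of $W$'', while you phrase it as $d(f+qg)_{p}|_{T_{p}R}=df_{p}|_{T_{p}R}\neq 0$, which is the same point. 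The only step I would tighten is ``the adjunction formula forces $S$ to be a $(2,2,2)$-divisor'': adjunction by itself only identifies $\mc O_{R}(S)|_{S}$ with $\mc O_{R}(2,2,2)|_{S}$, so you should add that the restriction $\Pic(R)\to\Pic(S)$ is injective (immediate here, since $H_{\a}$, $H_{\b}$ and $F=H_{\eta}|_{S}$ are part of a basis of the lattice $\Pic(S)$), or equivalently read off the class of $S$ from the intersection numbers $S\cdot H_{\b}\cdot H_{\eta}=H_{\b}.F=2$ and its two analogues.
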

\begin{proof}
Consider the following exact sequence:
\[0 \to \mc I_{S} \otimes \mc O(2H_{\xi}+2H_{\eta}) \to \mc O(2H_{\xi}+2H_{\eta}) \to \mc O(2H_{\xi}+2H_{\eta})|_{S} \to 0.\]
We have $h^{0}(\P^{3} \times \P^{1},\mc O(2H_{\xi}+2H_{\eta}))=30$ and $h^{0}(S,\mc O(2H_{\xi}+2H_{\eta})|_{S})=26$ and hence there exists a member of $|2H_{\xi}+2H_{\eta}|$ which contains $S$. 
Set $V:=H^{0}(\P^{3} \times \P^{1},\mc O(2H_{\xi}+2H_{\eta}) \otimes \mc I_{S})$ and let $\L:=|V|$ be the linear system corresponding to $V$. 

Let us prove that $\Bs \L=S$. 
By Lemma \ref{lem:Biii3} (6), there exists a non-singular quadric surface $Q \sb \P^{3}$ such that $S \sb Q \times \P^{1}$. 
Set $R:=Q \times \P^{1}$. Then $R$ is linear equivalent to $2H_{\xi}$ as a divisor of $\P^{3} \times \P^{1}$. 
Hence we have $R+2H_{\eta} \in \L$ and $\Bs \L \sb R$. 
Thus $\Bs \L=\Bs \L \cap R=\Bs \L|_{R}$ holds. 
Let us fix an isomorphism $R \simeq \P^{1}_{\a} \times \P^{1}_{\b} \times \P^{1}_{\eta}$ and 
define $H_{\a}:=\mc O(1,0,0)$ and $H_{\b}:=\mc O(0,1,0)$. Note that $H_{\eta}|_{R}=\mc O(0,0,1)$. 
Now $\L|_{R}$ is the linear system corresponding to the linear space $V_{R}=H^{0}(\P^{1}_{\a} \times \P^{1}_{\b} \times \P^{1}_{\g} , \mc O(2H_{\a}+2H_{\b}+2H_{\eta}) \otimes \mc I_{S/R})$. 
Since $\mc I_{S}=\mc O(-2H_{\a}-2H_{\b}-2H_{\eta})$, we have $\dim V_{R}=1$ and hence $|\L|_{R}|=\{S\}$. Thus we have that $\Bs \L=S$. 

Therefore, a general member $W$ of $\L$ have singularities only at $S$. 
Since $\L$ is movable and $\dim \L \geq 3$, a general member $W$ is irreducible and reduced by Bertini's theorem. 
Moreover, $S=W \cap R$ is a non-singular Cartier divisor of $W$ and hence $W$ is non-singular along $S$. By this argument, general members of $\L$ are non-singular. 
\end{proof}

Let $W$ be a general member of $\L$ and $\pi \colon W \to \P^{1}$ the restriction to W of the projection $\P^{3} \times \P^{1} \to \P^{1}$. 
Thus $\pi$ gives a quadric fibration structure and $\deg (\pi|_{B})=3$. 
Let $Z=\Bl_{B_{W}}W$ denotes the blowing-up of $W$ along $B_{W}$. 
It is easy to see that $(-K_{Z})^{3}=2$. 
By Lemma \ref{lem:Biii3} (7), $-K_{Z}$ is nef and big. 
Therefore, the pair $(\pi \colon W \to \P^{1},B)$ satisfies the condition $(\dag_{6})$ in Proposition \ref{prop:1}.
Thus we have a del Pezzo fibration of degree 6 $\vp \colon X \to \P^{1}$ satisfying the following: 
\[(-K_{X})^{3}=4,\quad -K_{X}.C=0 \text{ and } D\equiv\frac{1}{2}(-K_{Y})-\frac{1}{2}E_{Y}+F.\]
Since $Y$ is almost Fano and $-K_{X}.C=0$, $X$ is almost Fano and belongs to Case (B-i-3) or Case (B-iii-3). 
\begin{lem}
$h^{1,2}(X)=3$．
\end{lem}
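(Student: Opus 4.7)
The plan is to propagate $h^{1,2}$ through the birational diagram $(\bigstar_6)$ of Proposition~\ref{prop:1}, reducing the problem to computing $h^{1,2}(W)$ for the ambient quadric fibration $W \sb \P^{3} \times \P^{1}$.

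First I would apply Lemma~\ref{lem:4} to the two blowups $\mu \colon Y \to X$ along the $\vp$-section $C$ and $\t \colon Z \to W$ along $B_{W}$, together with Lemma~\ref{lem:3} for the flop $\Phi \colon Z \dra Y$. This produces the chain
\[
h^{1,2}(X) + h^{0,1}(C) = h^{1,2}(Y) = h^{1,2}(Z) = h^{1,2}(W) + h^{0,1}(B_{W}).
\]
Since $C$ is a $\vp$-section, $C \simeq \P^{1}$ and $h^{0,1}(C) = 0$. For $B_{W}$, the lattice entries give $B^{2} = -2$ on the K3 surface $S$, so adjunction on $S$ gives $g_{B}=0$ and $h^{0,1}(B_{W}) = 0$. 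Thus $h^{1,2}(X) = h^{1,2}(W)$, and it remains to show $h^{1,2}(W) = 3$.

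Next I would collect the easy Hodge numbers of $W$. As $W$ is a smooth ample divisor of bidegree $(2,2)$ in $\P^{3} \times \P^{1}$, the Lefschetz hyperplane theorem gives $b_1(W) = 0$, $h^{1,1}(W) = 2$ and $h^{2,0}(W) = 0$. Since $-K_W = 2H|_W$ is nef and big, Kawamata--Viehweg vanishing yields $h^{i}(\mc O_W) = 0$ for $i > 0$, in particular $h^{3,0}(W) = 0$. Combined with Poincar\'e duality this forces
\[
\chi_{top}(W) = 2 + 2 h^{1,1}(W) - 2 h^{1,2}(W) = 6 - 2 h^{1,2}(W).
\]

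The hard part will be showing $\chi_{top}(W) = 0$. The most direct route is a Chern class calculation: using the conormal sequence $0 \to T_W \to T_{\P^{3} \times \P^{1}}|_W \to N_{W/\P^{3} \times \P^{1}} \to 0$, expand $c_3(T_W)$ as the degree-$3$ part of $c(T_{\P^{3}\times\P^{1}})/(1+[W])$ in the Chow ring of the ambient, then integrate against $[W]=2H_{\xi}+2H_{\eta}$; the cancellations produce zero. Alternatively, one can exploit the quadric fibration structure $\pi \colon W \to \P^{1}$: its discriminant on $\P^{1}$ (the determinant of the $4\times 4$ symmetric matrix of quadratic forms cutting out the fibers) has degree $8$, so the associated double cover of $\P^{1}$ is a hyperelliptic curve of genus $3$, and a standard formula for the intermediate Jacobian of a smooth quadric surface bundle identifies $h^{1,2}(W)$ with this genus. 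Either path yields $h^{1,2}(W) = 3$ and hence $h^{1,2}(X) = 3$.
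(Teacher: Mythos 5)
Your proof is correct, and the reduction $h^{1,2}(X)=h^{1,2}(Y)=h^{1,2}(Z)=h^{1,2}(W)$ via Lemmas \ref{lem:3} and \ref{lem:4} (using $C\simeq\P^1$ and $g_B=0$ from $B^2=-2$ on the K3 lattice) is exactly the paper's first step. Where you diverge is in computing $h^{1,2}(W)$ for the smooth $(2,2)$-divisor $W\sb\P^3\times\P^1$. The paper also reduces to the topological Euler number, but it computes $\eu$ for a \emph{special} member $D$ defined by a general symmetric matrix $M(\eta_0,\eta_1)$: it counts the $8$ singular fibers (quadric cones) of $D\to\P^1$ via $\deg\det M=8$, stratifies to get $\eu(D)=4\cdot 2+8(3-4)=0$, and then invokes connectedness of the locus of smooth members in $|2H_\xi+2H_\eta|$ to transfer $b_3(D)=6$ to $W$ by deformation invariance. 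Your first route (expanding the degree-$3$ part of $c(T_{\P^3\times\P^1})/(1+[W])$ and integrating against $2H_\xi+2H_\eta$; the coefficient does vanish, so $\eu(W)=\int_W c_3(T_W)=0$) is cleaner in that it applies to \emph{every} smooth member directly and dispenses with both the genericity of $M$ and the deformation argument. Your second route repackages the paper's topological count (degree-$8$ discriminant, genus-$3$ double cover of rulings) through the intermediate Jacobian theory of quadric surface bundles; it gives $h^{1,2}(W)=3$ without passing through $\eu$, but it quietly assumes the discriminant double cover is connected and leans on a result considerably heavier than anything else used here, so as written it is the less self-contained of your two options. Either way the conclusion $h^{1,2}(W)=3$, hence $h^{1,2}(X)=3$, is sound.
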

\begin{proof}
In the diagram $(\bigstar_{6})$ for this case, $Y \to X$ is the blowing-up along $B \simeq \P^{1}$ and $Y \dra Z$ is flop and $Z \to W$ is the blowing-up along $C \simeq \P^{1}$. 
By Lemma \ref{lem:3} and \ref{lem:4}, we have $h^{1,2}(X)=h^{1,2}(W)$. Note that $W$ is a non-singular member of $|2H_{\xi}+2H_{\eta}|$ in $\P^{3}_{\xi} \times \P^{1}_{\eta}$. 

Consider a $4 \times 4$ symmetric matrix $M(\eta_{0},\eta_{1})=(f_{ij}(\eta_{0},\eta_{1}))_{0 \leq i,j \leq 3}$ over $\C[\eta_{0},\eta_{1}]$ 
such that each $f_{ij}(\eta_{0},\eta_{1})$ is a homogenous polynomial of degree 2. Let 
$D=\left(\sum f_{ij}(\eta_{0},\eta_{1})\xi_{i}\xi_{j}=0\right)$ 
be a member of $|2H_{\xi}+2H_{\eta}|$. 
By taking general $M(\eta_{0},\eta_{1})$, we may assume that $D$ is non-singular and $\det M(\eta_{0},\eta_{1})$ has no multiple root. 
Thus the singular fibers of $D \to \P^{1}$ is determined by $\det M(\eta_{0},\eta_{1})=0$. In particular, the number of the singular fibers of $D \to \P^{1}$ is 8. 
Hence we have 
\[\eu(D)=\eu(\P^{1} \times \P^{1}) \cdot \eu(\P^{1}) + 8 \left( \eu(\Q^{2}_{0})-\eu(\P^{1} \times \P^{1}) \right)=0, \]
where $\eu$ denotes the topological Euler number and $\Q^{2}_{0}=\{xy-z^{2}=0\} \sb \P^{3}$ denotes the quadric cone. 
On the other hand, we have $b_{0}(D)=b_{6}(D)=1$, $b_{1}(D)=b_{5}(D)=0$ and $b_{2}(D)=b_{4}(D)=2$, where $b_{i}$ denotes the $i$-th Betti number. 
Thus we have $b_{3}(D)=6$. 
Due to Bertini's theorem, 
$\{D \in |2H+2F| \mid D \text{ is non-singular }\}$ is an open set of $|2H+2F|$ and connected in the Euclidean topology. In particular, $D$ is deformation equivalent to $W$. Thus we have $b_{3}(D)=b_{3}(W)$. 
Since $h^{0,3}(W)=0$, we obtain $h^{1,2}(W)=3$. 
\end{proof}
By this lemma, $X$ can not belong to Case (B-i-3). Therefore, $X$ belongs to Case (B-iii-3). \bbox
\subsection{Case (B-iii-4)}\label{ss:Biii4}
In order to construct an example belonging to Case (B-iii-4), 
we consider the following lattice: 
\[\begin{pmatrix}
&H&\G&B \\
H&6&2&11 \\
\G&2&-2&8 \\
B&11&8&8
\end{pmatrix}.\]
This is an even lattice with signature $(1,2)$. By virtue of Theorem \ref{thm:NSL}, there exists a K3 surface $S$ such that $\Pic(S)$ is isometry to this lattice. Moreover, we may assume that $H$ is nef and big. 

Set $C_{2}:=\G$, $C_{5}:=3H-\G-B$ and $C_{7}:=4H-3\G-B$. 
Note that $C_{i}$ is $(-2)$-curve with $H.C_{i}=i$ for $i \in \{2,5,7\}$. 

Let $C=xH+y\G+zB$ be a divisor on $S$ with $x,y,z \in \Z$. Then we have 
\[C^{2}=6x^{2}-2y^{2}+8z^{2}+4xy+22xz+16yz \text{ and } H.C=6x+2y+11z.\]
By solving these equations, we have the following: 
\begin{align*}
&x=\frac{4l-57z \pm \sqrt{4(H.C)^{2}-24C^{2}-123z^{2}}}{24} \text{ and } \\
&y=\frac{13z \mp \sqrt{4(H.C)^{2}-24C^{2}-123z^{2}}}{8}.
\end{align*}
By a straightforward calculation, we can prove that the following (i)-(iv). 
\begin{itemize}
\item[(i)] It is impossible that $C^{2}=0$ and $H.C=1$. 
\item[(ii)] It is impossible that $C^{2}=0$ and $H.C=2$. 
\item[(iii)] It is impossible that $C^{2}=0$ and $H.C=3$.
\item[(iv)] If $C^{2}=-2$ and $H.C \leq 10$, then $C=C_{2}$, $C_{5}$ or $C_{7}$. 
\end{itemize}
\begin{lem}
\begin{enumerate}
\item $H$ is very ample and $S$ is embedded in $\Q^{3}$ as an anti-canonical divisor. 
\item There exists a non-singular member of $|B|$ meeting $\G$ transversally. 
\end{enumerate}
\end{lem}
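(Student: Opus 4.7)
The plan is to treat part (1) by a routine verification of the criteria in Theorem \ref{thm:SDR}(2) and Lemma \ref{lem:2} using the preparatory facts (i)--(iv), and to handle part (2) by first establishing base point freeness of $|B|$ and then deducing transversality with $\G$ via a restriction argument.

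For (1), to apply Theorem \ref{thm:SDR}(2) I would rule out the three obstructions in turn. The condition $D^{2}=-2$, $H.D=0$ is forbidden by (iv), since $C_{2},C_{5},C_{7}$ all have $H.C_{i}>0$; the condition $D^{2}=0$ with $H.D\in\{1,2\}$ is (i) and (ii); and $H\equiv 2D$ with $D^{2}=2$ would force $H^{2}=8$, contradicting $H^{2}=6$. Hence $H$ is very ample and $|H|$ embeds $S$ into $\P^{4}$. Lemma \ref{lem:2} then presents $S$ as a complete intersection of a quadric $Q$ and a cubic hypersurface, and smoothness of $Q$ is equivalent to the non-existence of an effective decomposition $H=C_{1}+C_{2}$ with $C_{i}^{2}=0$ and $H.C_{i}=3$, which is precisely excluded by (iii). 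So $S$ lies inside $\Q^{3}$ as an anti-canonical member.

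For (2), the first step is to show $|B|$ is fixed part free. By Lemma \ref{lem:1}, each component of the fixed part $N$ is a $(-2)$-curve, and $H.N<H.B=11$ forces each component into $\{C_{2},C_{5},C_{7}\}$ by (iv). Writing $N=aC_{2}+bC_{5}+cC_{7}$ with $a,b,c\in\Z_{\geq 0}$ and $M:=B-N$, the constraint $2a+5b+7c<11$ leaves only a short list of candidates, and checking $M^{2}\geq 0$ should eliminate every $(a,b,c)\neq(0,0,0)$ by a direct computation using the Gram matrix together with the intersection numbers $B.C_{i}$ and $C_{i}.C_{j}$. Since movability forces $B$ to be nef (any curve with negative intersection would have to appear as a fixed component), Theorem \ref{thm:SDR}(1) yields base point freeness of $B$, and Theorem \ref{thm:SDR}(3) supplies a non-singular member of $|B|$.

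For the transversality assertion, I would use the restriction map $H^{0}(S,\mc O(B))\to H^{0}(\G,\mc O(B)|_{\G})$. Since $|B|$ is base point free, its image is a base point free sublinear system on $\G\cong\P^{1}$ of degree $B.\G=8$. A general divisor of degree $8$ on $\P^{1}$ consists of $8$ distinct points, so for a general $B'\in|B|$ the scheme $B'\cap\G$ is reduced of length $8$; combined with the smoothness of a generic $B'$ and of $\G$, this forces $B'$ and $\G$ to meet transversally at each intersection point. The main obstacle is the combinatorial bookkeeping showing $(a,b,c)=(0,0,0)$ is the only admissible triple: the wider lattice here (three generators together with the new $(-2)$-class $C_{7}$) allows more candidate fixed components than in Case (B-iii-2), so one has to verify that for every non-trivial triple permitted by $2a+5b+7c<11$ the resulting $M^{2}$ is negative.
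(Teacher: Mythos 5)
Your proposal is correct and follows essentially the same route as the paper: part (1) is the verification of the criteria in Theorem \ref{thm:SDR}(2) together with Lemma \ref{lem:2} using (i)--(iv) (the paper simply defers to the argument of Lemma \ref{lem:Bii}(1)), and part (2) reduces, exactly as in the paper, to showing $|B|$ has no fixed part by writing $N=a C_{2}+bC_{5}+cC_{7}$ and eliminating all nonzero triples via $2a+5b+7c<11$ and $M^{2}\geq 0$ (which does check out). The only difference is that you spell out the steps the paper leaves implicit --- nefness and base point freeness of $B$ via Theorem \ref{thm:SDR}(1), smoothness of a general member via Theorem \ref{thm:SDR}(3), and transversality with $\G$ by restricting the base point free system to $\G\simeq\P^{1}$ --- all of which are sound.
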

\begin{proof}
\begin{enumerate}
\item It can be proved similarly to Lemma \ref{lem:Bii}~(1). 
\item It is enough to show that $|B|$ is movable. 
Let $M$ denotes the movable part of $|B|$ and $N$ the fixed part. 
Then we have $11=H.B>H.N$ 
By Lemma \ref{lem:1} and (iv), there exists $a,b,c \in \Z_{\geq 0}$ such that $B=M+a\G+bC_{5}+cC_{7}$. Since $H$ is very ample and $M$ is movable, we have the following inequalities:
\begin{align*}
&11=H.B=(H.M)+2a+5b+7c>2a+5b+7c \text{ and } \\
&M^{2}=8-(18a+36b+26c)+12ac+6bc \geq 0. 
\end{align*}
It is easy to see that $a=b=c=0$ is the only non-negative integer solution of these inequalities. 
Hence $|B|=M$ is movable. \qedhere
\end{enumerate}
\end{proof}

Hence there exists an embedding $S \hra \Q^{3}$. 
Let $\s \colon W:=\Bl_{\G}\Q^{3} \to \Q^{3}$ be the blowing-up of $W$ along $\G$ and $G=\Exc(\s)$. As we have seen, there exists a quadric fibration 
$\pi \colon W \to \P^{1}$ given by $F_{W}:=\s^{\ast}\mc O_{\Q^{3}}(1)-G$. 
Let us identify $S_{W},B_{W}$ with $S,B$ respectively and regard divisors of $S_{W}$ as those of $S$. 
Then $\mc O_{S_{W}}(F_{W})=H-\G$ and hence $(F_{W}.B)_{W}=((H-\G).B)_{S}=3$. 
Set $\t \colon Z:=\Bl_{B}W \to W$ and $p:=\pi \circ \t \colon Z \to \P^{1}$. 
\begin{lem}
$-K_{Z}$ is $p$-nef and $p$-big. 
\end{lem}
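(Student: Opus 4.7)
My plan is to exhibit $-K_{Z}$ as an effective divisor equal to the proper transform of the K3 surface $S\subset \Q^{3}$, and then to deduce both $p$-nefness and $p$-bigness by restricting to this K3 surface. Setting $H_{Z}:=\t^{\ast}\s^{\ast}\mc O_{\Q^{3}}(1)$, $G_{Z}:=\t^{\ast}G$ (legitimate since $B_{W}\not\subset G$: indeed $B_{W}\cap G$ is just the $8$ transversal intersections of $B$ and $\G$ on $S$), and $E:=\Exc(\t)$, the blow-up formulas combined with $-K_{W}=3H_{W}-G$ yield $-K_{Z}=3H_{Z}-G_{Z}-E$. Since $B_{W}\subset S_{W}$ and $S_{W}=3H_{W}-G$, the proper transform of $S$ in $Z$ satisfies $S_{Z}=3H_{Z}-G_{Z}-E$. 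Thus $-K_{Z}\sim S_{Z}$ is represented by the irreducible effective K3 surface $S_{Z}$.

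Because $\G\subset S$ and $B_{W}\subset S_{W}$ are divisors in their ambient smooth surfaces, the restrictions $\s|_{S_{W}}\colon S_{W}\to S$ and $\t|_{S_{Z}}\colon S_{Z}\to S_{W}$ are isomorphisms. Under the composed identification $S_{Z}\simeq S$, one reads off $H_{Z}|_{S_{Z}}=H$ and $G_{Z}|_{S_{Z}}=\G$; a local-chart computation in the blow-up then gives $E|_{S_{Z}}=B$. Therefore $-K_{Z}|_{S_{Z}}=3H-\G-B=C_{5}$, the $(-2)$-curve. Note that $C_{5}$ is \emph{not} nef on $S$, so $-K_{Z}$ is certainly not globally nef on $Z$; the lemma only asks for the weaker $p$-nefness.

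For $p$-bigness, a general fiber $F_{Z}$ is the blow-up of the smooth quadric $F_{W}\simeq \P^{1}\times \P^{1}$ at the three transversal intersection points $B_{W}\cap F_{W}$, so $F_{Z}$ is a degree-$5$ del Pezzo surface, and adjunction gives $-K_{Z}|_{F_{Z}}=-K_{F_{Z}}$, which is ample. For $p$-nefness, I would take an irreducible $p$-vertical curve $\g\subset Z$ and split into two cases. If $\g\not\subset S_{Z}$, then $-K_{Z}\cdot\g=S_{Z}\cdot\g\geq 0$ by effectivity of $S_{Z}$. If $\g\subset S_{Z}$, then under $S_{Z}\simeq S$ the morphism $p|_{S_{Z}}$ is the elliptic fibration given by $|H-\G|$ (since $F_{Z}|_{S_{Z}}=(H_{Z}-G_{Z})|_{S_{Z}}=H-\G$), so $(H-\G)\cdot\g=0$. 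A direct lattice computation gives $(H-\G)\cdot C_{5}=5\neq 0$, so $C_{5}$ itself is not $p$-vertical and in particular $\g\neq C_{5}$; hence $\g\cdot C_{5}\geq 0$ as distinct irreducible curves on the smooth surface $S$, yielding $-K_{Z}\cdot\g\geq 0$.

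The main subtlety will be the local-chart verification that $E|_{S_{Z}}$ corresponds to $B$ under the identification $S_{Z}\simeq S$; once that is secured, the $p$-nefness collapses to the single numerical fact $(H-\G)\cdot C_{5}\neq 0$, and $p$-bigness is immediate from the degree-$5$ del Pezzo structure of a general fiber.
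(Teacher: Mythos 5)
Your proof is correct and follows essentially the same route as the paper: both identify $-K_{Z}$ with the effective divisor $S_{Z}$, restrict to the K3 surface to get $-K_{Z}|_{S_{Z}}=3H-\G-B=C_{5}$, and kill the only potentially negative curve via the single numerical fact $F_{Z}.C_{5}=(H-\G).C_{5}=5\neq 0$. The only difference is cosmetic (you case-split on $\g\subset S_{Z}$ up front and spell out the $p$-bigness, which the paper dismisses as clear), so no further comparison is needed.
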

\begin{proof}
It is clear that $-K_{Z}$ is $p$-big. 
Let $\g \sb Z$ be a curve which is contained in some $p$-fiber. 
If $-K_{Z}.\g<0$, then $S_{Z} \simeq S$ contains $\g$. 
Since $-K_{Z}|_{S_{Z}}=3H-\G-B=C_{5}$, we have $\g=C_{5}$. 
Since $F_{Z}|_{S_{Z}}=H-\G$, we have $F_{Z}.\g=(H-\G).C_{5}=5$, which contradicts that $\g$ is contracted by $p$. 
Hence $-K_{Z}$ is $p$-nef. 
\end{proof}
Therefore, the pair $(\pi \colon W \to \P^{1},B)$ satisfies the condition $(\dag_{6})$. 
By Proposition \ref{prop:1}, we have the diagram $(\bigstar_{6})$. 
In particular, we obtain a del Pezzo fibration of degree 6 $\vp \colon X \to \P^{1}$ satisfying the following: 
\[(-K_{X})^{3}=2,\quad -K_{X}.C=1 \text{ and } D \equiv \frac{1}{2}(-K_{Y})-\frac{1}{2}E_{Y}+\frac{5}{2}F.\]
\begin{lem}\label{lem:almFBiii4}
$-K_{X}$ is nef and big but not ample. 
\end{lem}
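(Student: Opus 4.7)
Since $(-K_X)^3 = 2 > 0$, bigness will follow once nefness is established, so the plan concentrates on nefness and non-ampleness.

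First I would make the pullback $\mu^*(-K_X)$ completely explicit. From $\mu^*(-K_X) \equiv -K_Y + D$ and the formula $D \equiv \tfrac{1}{2}(-K_Y) - \tfrac{1}{2}E_Y + \tfrac{5}{2}F$ of Proposition \ref{prop:1}, together with $-K_W = 2H_W + F_W$ (coming from $-K_W = 3H_W - G$ and $F_W = H_W - G$) and the blow-up relation $-K_Z = 2H_Z + F_Z - E$, one obtains after transfer through the flop the numerical identity $\mu^*(-K_X) \equiv 3H_Z + 4F_Z - 2E$ on $Z$. Nefness of $-K_X$ on $X$ is equivalent to that of $\mu^*(-K_X)$ on $Y$, and by Kleiman's criterion I would verify this on generators of $\NE(Y)$: the $\t$-exceptional rulings give $+2$; curves in a general $q$-fiber (a del Pezzo surface of degree $5$) are controlled by ampleness of $-K_{F_Y}$; on horizontal curves transferred to $S_Z \simeq S$ the class becomes $7H\cdot\gamma - 4\G\cdot\gamma - 2B\cdot\gamma$ in the Picard lattice of $S$, which I would check is non-negative on the $(-2)$-curves $C_2, C_5, C_7$ (giving the values $6, 1, 1$) and then extend to the effective cone by linearity; flopping curves $\ell$ with $-K_Z\cdot\ell = 0$, namely lines in singular $p$-fibers meeting $B$ in two of its three intersection points, satisfy $H_Z\cdot\ell = 1, E\cdot\ell = 2$ and give $(3H_Z + 4F_Z - 2E)\cdot\ell = -1$, which is precisely the sign needed for the flopped class on $Y$ to be nef; curves inside $E$ or $G_Z$ are handled by the $\P^1$-bundle structure and the numerics of the normal bundles $N_{B/W}$ and $N_{\G/\Q^3} \simeq \mc O(2)^{\oplus 2}$.

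For non-ampleness, I would exhibit a $K_X$-trivial irreducible curve generating the second extremal ray of $\NE(X)$. By Proposition \ref{prop:1}(1) the small contraction $\Psi \colon Z \to \bar Z$ is genuinely not an isomorphism, as witnessed by the flopping lines above (these make $-K_Z$ fail to be $p$-ample in the strict sense); transferring such a curve $\ell$ through $\Phi$ to $\ell_Y \subset Y$ and then through $\mu$, I would verify $-K_X\cdot\mu_*\ell_Y = D\cdot\ell_Y = 0$ for those flopping curves disjoint from the $\mu$-exceptional divisor. The relevant geometry is that $D$ meets each $q$-fiber along the single ruling over the point $C\cap\vp^{-1}(s)$, while a suitable flopping curve can be chosen in a singular fiber away from that ruling.

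The hard part will be the nefness check off $S_Z$ --- specifically on sections of $E\to B$ and curves in the proper transform $G_Z$ --- together with the explicit verification that some flopping curve avoids $D$. Both reduce to careful intersection-theoretic bookkeeping using the normal-bundle data for $B \subset W$ and $\G \subset \Q^3$, and this is where the bulk of the technical work lies.
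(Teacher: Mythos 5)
Your nefness computation contains the correct key number --- the restriction of $\mu^{\ast}(-K_{X})$ to the K3 member is $7H-4\G-2B=C_{5}+C_{7}$, and its intersections $6,1,1$ with $C_{2},C_{5},C_{7}$ match the paper's decisive computation $(C_{5}+C_{7}).C_{5}=-2+3=1$ --- but the surrounding logic has a gap. A horizontal curve of $Y$ need not lie on $S_{Y}$, $E_{Y}$ or $G_{Y}$, so your list of curve types does not exhaust $\NE(Y)$ and ``extending by linearity'' from three $(-2)$-classes is not a nefness criterion. The paper closes this by the standard reduction: if $\mu^{\ast}(-K_{X}).\g=(-K_{Y}+D).\g_{Y}<0$ with $\g\neq C$, then $D.\g_{Y}\geq 0$ forces $-K_{Y}.\g_{Y}<0$, which forces $\g_{Y}\sb S_{Y}\in|-K_{Y}|$; after checking that $\Phi|_{S_{Z}}$ is an isomorphism (no flopping curve lies on $S_{Z}$), one gets $\g_{Y}=C_{5}$ since $-K_{Y}|_{S_{Y}}=C_{5}$ is irreducible, and the computation $(C_{5}+C_{7}).C_{5}=1>0$ finishes. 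Note also that in this case $-K_{Y}$ itself is \emph{not} nef ($-K_{Y}.C_{5}=-2$), so arguments valid in the other cases of the paper do not transfer.

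The non-ampleness argument is genuinely wrong: there is no flopping curve disjoint from $D$. Any flopping curve $\ell$ lies in a $p$-fiber with $-K_{Z}.\ell=0$, and its image in the quadric surface $F_{W}$ has $-K_{F_{W}}$-degree equal to $E.\ell$; since $-K_{F_{W}}$ is ample, $E.\ell>0$ (indeed $E.\ell=2$ for the rulings through two of the three points of $B\cap F_{W}$). On the flopped curve $\ell_{Y}$ one has $-K_{Y}.\ell_{Y}=F_{Y}.\ell_{Y}=0$ and $E_{Y}.\ell_{Y}=-E.\ell$, so $D.\ell_{Y}=-\tfrac{1}{2}E_{Y}.\ell_{Y}=\tfrac{1}{2}E.\ell>0$ and $-K_{X}.\mu_{\ast}\ell_{Y}=\tfrac{1}{2}E.\ell\geq 1$, never $0$. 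So the flopped curves are not $K_{X}$-trivial, and your proposed witness for non-ampleness does not exist. The paper avoids exhibiting a trivial curve altogether: by the Mori--Mukai classification there is no Fano threefold with $\rho=2$ and $(-K_{X})^{3}=2$, so $-K_{X}$ cannot be ample once it is known to be nef and big.
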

\begin{proof}
By Mori-Mukai's classification, $X$ is not Fano. 
Hence it is enough to show the nefness of $-K_{X}$. 

At first, we prove that $\Phi|_{S_{Z}} \colon S_{Z} \dra S_{Y}$ is an isomorphism. 
Let $\g$ be an irreducible flopping curve of $\Phi$ and assume $S_{Z} \simeq S$ contains $\g$ as a divisor. 
Thus there exists $x,y,z \in \Z$ such that $\g=xH+y\G+zB$ holds in $\Pic(S)$. 
Note that $-K_{Z}|_{S_{Z}}=C_{5}$, $F_{Z}|_{S_{Z}}=H-\G$ and $\g$ is $(-2)$-curve. 
Hence the following equations hold: 
\[(3H-\G-B).\g=(H-\G).\g=0 \text{ and }\g^{2}=-2.\]
But these equations have no integer solution for $x,y,z$. 
Hence $S_{Z}$ has no flopping curve of $\Phi$. 
Thus $\Phi|_{S_{Z}} \colon S_{Z} \dra S_{Y}$ is an isomorphism. 

Let us prove the nefness of $-K_{X}$. 
Let $\g \sb X$ be a curve satisfing $(-K_{X}).\g<0$. 
Then we have $\g \neq C=\mu(D)$ and hence $0>\mu^{\ast}(-K_{X}).\g=(-K_{Y}+D).\g_{Y}$, where $D=\Exc(\mu)$. 
Since $\g \neq C$, $D$ does not contain $\g_{Y}$ and hence we have $D.\g_{Y} \geq 0$. 
Thus $-K_{Y}.\g_{Y} < 0$ which means $S_{Y}$ contains $\g_{Y}$. 
Since $-K_{Y}|_{S_{Y}}=3H-\G-B=C_{5}$, we have $\g_{Y}=C_{5}$. 
Now, $D|_{S_{Y}}=4H-3\G-B=C_{7}$ and hence we have $(-K_{Y}+D)|_{S_{Y}}=C_{5}+C_{7}$. Therefore, we have $(-K_{Y}+D).\g_{Y}=(C_{5}+C_{7}).C_{5}=-2+3=1$ and it is contradiction. Thus $-K_{X}$ is nef. 
\end{proof}
By Lemma \ref{lem:almFBiii4}, $\vp \colon X \to \P^{1}$ is an almost Fano del Pezzo fibration of degree $6$ with $(-K_{X})^{3}=2$. Hence $X$ belongs to Case (B-iii-4). 
By Lemma \ref{lem:3} and \ref{lem:4}, we have 
$h^{1,2}(X)=5$. 
\bbox
\subsection{$\P^{2}$-bundles with quinque-sections}
In this subsection, we state and prove Proposition \ref{prop:2}, which is similar to Proposition \ref{prop:1}, to construct an example belonging to Case (A-2). 
\begin{prop}\label{prop:2}
Let $\pi \colon W \to \P^{1}$ be a $\P^{2}$-bundle and 
$B \sb W$ a smooth projective curve and 
$\t \colon Z:=\Bl_{B}W \to W$ the blowing-up along $B$. 
We assume the following condition $(\dag_{5})$ for a pair $(\pi \colon W \to \P^{1},B)$.
\[(\dag_{5}) \cdots
\left\{ \begin{array}{ll}
\deg (\pi|_{B} \colon B \to \P^{1})=5 \text{ and } \\
-K_{Z} \text{ is $p$-nef and $p$-big with } p := \pi \circ \t \colon Z \to \P^{1}. 
\end{array} \right. \]
Then the following holds.
\begin{enumerate}
\item The morphism from $Z$ to the relative anti-canonical model over $\P^{1}$ 
\[ \Psi \colon Z \to \ol{Z}:=\Proj \bigoplus_{n \geq 0} p_{\ast}\mc O_{Z}(-nK_{Z})\]
is a small contraction or an isomorphism. 
\item Define a variety $Y$ and a birational map $\Phi \colon Z \dra Y$ as follows: 
\begin{itemize}
\item If $-K_{Z}$ is $p$-ample, set $Y:=Z$ and $\Phi:=\id_{Z} \colon Z \to Y$; or 
\item if $-K_{Z}$ is not $p$-ample, let $\Phi \colon Z \dra Y$ be the flop over $\P^{1}$ of $\Psi \colon Z \to \ol{Z}$. 
\end{itemize}

Let $q \colon Y \to \P^{1}$ be a structure morphism onto $\P^{1}$ and $\mu \colon Y \to X$ a $K_{Y}$-negative ray contraction over $\P^{1}$. If $-K_{Z}$ is $p$-ample, we choose the other ray which is different to the one corresponding to $\t \colon Z \to W$. Let $\vp \colon X \to \P^{1}$ denotes the structure morphism onto $\P^{1}$. 

Then $\mu$ is the blowing-up along a $\vp$-section $C$ and $\vp \colon X \to \P^{1}$ is a del Pezzo fibration of degree 5. 
\[\xymatrix{
&D \ar[ld]& \ar@{}[l]|{\sb} \ar[ld]_{\mu}Y=\Bl_{C}X \ar[dd]^{q}\ar@{<--}[r]^{\Phi}& Z=\Bl_{B}W \ar@{}[r]|{\sp} \ar[rd]^{\t} \ar[dd]_{p}& E \ar[rd]& \\
C&\ar@{}[l]|{\sb} X \ar[rd]_{\vp}& && W \ar@{}[r]|{\sp} \ar[ld]^{\pi} &B&(\bigstar_{5}) \\
&&\P^{1} \ar@{=}[r]&\P^{1}&&
} \]
\item Let $F_{Y}$ be a general $q$-fiber and we set $D:=\Exc(\mu)$, $E:=\Exc(\t)$ and $E_{Y}:=\Phi_{\ast}E$. Then we obtain 
\[D \equiv \frac{2}{3}(-K_{Y})-\frac{1}{3}E_{Y}+zF_{Y}.\]
Moreover, the following equalities hold: 
\begin{align*}
(-K_{X})^{3}&=22-2g_{B}, \\
-K_{X}.C&=(-K_{W}).B-2g_{B}-16 \text{ and } \\
z&=\frac{2}{3}(-K_{W}).B-g_{B}-12.
\end{align*}
\end{enumerate}
\end{prop}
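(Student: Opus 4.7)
The plan is to adapt the 2-ray-game argument of Proposition \ref{prop:1} to the present setting; the only substantive change is that a general $p$-fiber $F_Z$ is now the blow-up of $F_W \cong \P^2$ at the five points of $B \cap F_W$, hence a (weak) del Pezzo surface of degree $4$. The relevant numerics become $(-K_{F_Z})^2 = 4$, $(-K_{F_Z}) \cdot (E|_{F_Z}) = 5$, and $(E|_{F_Z})^2 = -5$; moreover, any $\P^2$-bundle over $\P^1$ satisfies $(-K_W)^3 = 54$ by a direct Chern-class computation via the Grothendieck relation, so this quantity is independent of $\pi$. For part (1), suppose $\Psi$ is divisorial with exceptional divisor $D$; then $D|_{F_Z}$ is a disjoint union of $n$ $(-2)$-curves, each arising as the proper transform of a line in $\P^2$ through at least three of the five blown-up points. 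The $p$-nefness of $-K_Z$ forbids four collinear points, and a short combinatorial argument (two distinct collinear triples among five points can share at most one point, making three such disjoint triples impossible) yields $n \leq 2$. Writing $D \equiv x(-K_Z) + yE + zF_Z$ and intersecting on $F_Z$ gives
\[
0 = 4x + 5y, \qquad -2n = 4x^2 + 10xy - 5y^2,
\]
whence $x^2 = 5n/18$, contradicting $x \in \Q$ for $n \in \{1, 2\}$.

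For part (2), I would follow the same case analysis as in Proposition \ref{prop:1}. Lemma \ref{lem:5} combined with the intersection equations $2 = 4x+5y$, $0 = 4x^2+10xy-5y^2$ (which have irrational solutions) excludes $\dim X = 2$; and assuming $\mu(D)$ is a point forces $D \equiv zF_Y$, making $\pm D$ nef, which is absurd. Assuming instead that $\mu$ is a divisorial contraction to a smooth curve $C$, set $m := F_X \cdot C \in \Z_{\geq 0}$; since $(-K_{F_Y})^2 = 4$ and $(-K_{F_X})^2 \leq 9$, we have $m \leq 5$. The three equations
\[
m = 4x+5y, \qquad -m = 4x^2+10xy-5y^2, \qquad 5(x-y) = E \cdot F \cdot D \in \Z
\]
admit, in this range, only the solutions $(m,x,y) = (0,0,0)$, $(1, \tfrac{2}{3}, -\tfrac{1}{3})$, $(5, 0, 1)$. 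The first is excluded as above, and the last gives $D|_{F_Y} \equiv E|_{F_Y}$ numerically, so $\psi \colon W \dra X$ is an isomorphism in codimension $1$ and then, by Lemma \ref{lem:6}, an isomorphism of Mori fiber spaces over $\P^1$, contradicting either the choice of $\mu$-ray (when $-K_Z$ is $p$-ample) or the non-triviality of $\Phi$ (otherwise), exactly as in Proposition \ref{prop:1}. Hence $(m,x,y) = (1, \tfrac{2}{3}, -\tfrac{1}{3})$, so $(-K_{F_X})^2 = 4 + m = 5$, $\vp$ is a del Pezzo fibration of degree $5$, and $\vp|_C \colon C \to \P^1$ is an isomorphism.

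For part (3), I would equate the three intersection numbers $(-K_Y)^i \cdot D_Y^{3-i} = (-K_Z)^i \cdot D_Z^{3-i}$ for $i = 1, 2, 3$, which are preserved by the flop $\Phi$. Expanding the $Z$-side using the standard blow-up formulas $(-K_Z)^3 = 54 - 2(-K_W) \cdot B + 2g_B - 2$, $(-K_Z)^2 \cdot E = (-K_W) \cdot B + 2 - 2g_B$, $(-K_Z) \cdot E^2 = 2g_B - 2$, together with $(-K_Z)^2 \cdot F_Z = 4$ and $(-K_Z) \cdot E \cdot F_Z = 5$, and expanding the $Y$-side via $(-K_Y)^3 = (-K_X)^3 - 2(-K_X) \cdot C - 2$, $(-K_Y)^2 \cdot D_Y = -K_X \cdot C + 2$, $(-K_Y) \cdot D_Y^2 = -2$ (all from $g_C = 0$ since $C$ is a section), yields three linear relations in the unknowns $(-K_X)^3$, $-K_X \cdot C$, and $z$ whose solution is precisely the stated formulas. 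The main obstacle is the enumeration in part (2): the arithmetic is routine, but ruling out the spurious solution $(5,0,1)$ relies on the rigidity of Mori fiber spaces under isomorphism in codimension 1 (Lemma \ref{lem:6}), just as in the degree-$6$ case.
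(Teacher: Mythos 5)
Your proposal is correct and follows essentially the same route as the paper's proof: the same fiberwise analysis of $(-2)$-curves on the degree-$4$ weak del Pezzo fibers to rule out a divisorial crepant contraction, the same intersection-number equations $m=4x+5y$, $-m=4x^2+10xy-5y^2$, $5(x-y)\in\Z$ leading to the three candidate solutions with $(5,0,1)$ excluded via Lemma \ref{lem:6}, and the same flop-invariant intersection numbers $(-K)^i\cdot D^{3-i}$ combined with the blow-up formulas and $(-K_W)^3=54$ to derive the degree formulas. No substantive differences to report.
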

\begin{proof}
We can prove this proposition similarly to Proposition \ref{prop:2}. 
\begin{enumerate}
\item Let us assume that $\Psi \colon Z \to \ol{Z}$ is a divisorial contraction and set $D=\Exc(\Psi)$. 
Let $F_{Z}$ be a general $p$-fiber and $F_{W}$ a general $\pi$-fiber such that $\s(F_{Z})=F_{W}$. 
Now $\s|_{F_{Z}} \colon F_{Z} \to F_{W} \simeq \P^{2}$ is the blowing-up at reduced five points. 

Let us prove that $D|_{F_{Z}}$ is a disjoint union of $(-2)$-curves $l_{1},\ldots,l_{n}$ and $n \in \{1,2\}$. 
Let $e_{1},e_{2},e_{3},e_{4},e_{5}$ be exceptional curves of $\t|_{F_{Z}}$ and set $h:=(\t|_{F_{Z}})^{\ast}\mc O_{\P^{2}}(1)$. 
For every irreducible curve $l \sb F_{Z}$, 
there exists $a,b_{j} \in \Z_{\geq 0}$ such that $l=ah-\sum_{j=1}^{5} b_{j}e_{j}$. 
If $l$ is contracted by $\t|_{F_{Z}}$, then $l$ is $(-2)$-curve and we have the following: 
\[0=3a-\sum_{j=1}^{5}b_{j} \text{ and } -2=a^{2}-\sum_{j=1}^{5}b_{j}^{2}.\]
The following inequality 
\[a^{2}+2=\sum_{j=1}^{5}b_{j}^{2} \geq 5 \left( \frac{\sum_{j=1}^{5} b_{j}}{5} \right)=\frac{9}{5}a^{2}\]
implies that $a=1$. Thus three of the five numbers $b_{1},\ldots,b_{5}$ are 1 and the remaining two are 0. 
Hence $l$ is the proper transform of a line passing through exactly three points of the five points. 
By our assumption, $-K_{F_{Z}}$ is nef and hence any four points are not collinear. 
Therefore, the number of lines passing through exactly three points of the five points is at most two. 
Moreover, if there exists two lines passing through three points of the five points, then the proper transforms of those do not meet since the two lines meet transversally at one point of the five points.

There exists $x,y,z \in \Q$ such that $D \equiv x(-K_{Z})+yE+zF_{Z}$. Then we have the following equations:
\begin{align*}
0&=(-K_{F_{Z}})(D|_{F_{Z}})=4x+5y \text{ and } \\
-2n&=(D|_{F_{Z}})^{2}=4x^{2}-5y^{2}+10xy. 
\end{align*}
By these equations, we have $x=\pm \sqrt{\frac{5n}{18}}$. This contradicts $x \in \Q$ and $n \in \{1,2\}$. 
Therefore, $\Psi$ is an isomorphism or a small contraction. 
\item 
Let us prove that $\mu \colon Y \to X$ is the blowing-up along a non-singular curve $C$. 
Since $\rho(Y)=3$, we have $\dim X \geq 2$. 
If $\dim X=2$, then the following equations holds by the same argument in the proof of Proposition \ref{prop:1} : 
\begin{align*}
2=4x+5y \text{ and } 0=4x^{2}-5y^{2}+10xy \text{ with } x,y\in \Q. 
\end{align*}
Hence $x=\frac{3 \pm \sqrt{5}}{6}$, $y=\mp \frac{2}{3\sqrt{5}}$ and it contradicts $x,y \in \Q$. 
Hence $\mu$ is a divisorial contraction. 
By the similar arguments in the proof of Proposition \ref{prop:1}, we can prove that $\mu(D)$ is not a point. Hence we have that $\mu(D)=:C$ is a smooth curve, where $D=\Exc(\mu)$. 

Denote $D \equiv x(-K_{Z})+yE+zF_{Z}$ with $x,y,z \in \Q$ and $m:=F_{X}.C \in \Z_{\geq 0}$. 
Since $(-K_{F_{Y}})^{2}=4$ and $(-K_{F_{X}})^{2} \leq 9$, we have $0 \leq m \leq 5$. 
By a calculation of some intersection numbers, we have the following: 
\begin{align*}
m&=(-K_{Y})DF=4x+5y, \\
-m&=D^{2}F=4x^{2}-5y^{2}+10xy \text{ and } \\
\Z &\ni 5x-5y=EFD. 
\end{align*}
Hence we have the following:
\begin{align*}
&x=\frac{3m \pm \sqrt{5m^{2}+20m}}{12},y=\mp \sqrt{\frac{m^{2}+4m}{45}} \text{ and } \\
&5x-5y=\frac{5m \pm 3 \sqrt{5m^{2}+20m}}{4} \in \Z.
\end{align*}
By using these relations, the possibilities of $(m,x,y)$ are as follows:
\[(m,x,y)=(0,0,0),\left(1,\frac{2}{3},-\frac{1}{3} \right), (5,0,1)\]
We can see that $(x,y) \neq (0,0),(0,1)$ by the same argument in the proof of Proposition \ref{prop:1}. 
Hence we have that $m=1$, $x=\frac{2}{3}$, $y=-\frac{1}{3}$ and $(-K_{F_{X}})^{2}=5$. 
Thus $\mu$ is the blowing-up along a $\vp$-section and $\vp$ is a del Pezzo fibration of degree $5$. 
\item Since $x=\frac{2}{3}$, $y=-\frac{1}{3}$, $(-K_{Z})^{3}=(-K_{Y})^{3}$, $(-K_{Z})^{2}D=(-K_{Y})^{2}D_{Y}$ and $(-K_{Z})D^{2}=(-K_{Y})D_{Y}^{2}$, we obtain the following equalities. 
\begin{align*}
&(-K_{X})^{3}-2(-K_{X}).C-2=(-K_{W})^{3}-2(-K_{W}).B+(2g_{B}-2), \\
&(-K_{X}).C+2=\frac{2\left((-K_{W})^{3}-2(-K_{W}).B+(2g_{B}-2) \right)}{3}-\frac{(-K_{W}).B+2-2g_{B}}{3}+4z \text{ and } \\
&-2=\frac{4\left( (-K_{W})^{3}-2(-K_{W}).B+(2g_{B}-2)\right)}{9}+\frac{2g_{B}-2}{9}-\frac{4\left( (-K_{W}).B+2-2g_{B}\right)}{9}+2z.
\end{align*}
By solving these equalities, we have the following. 
\begin{align*}
(-K_{X})^{3}&=\frac{5(-K_{W})^{3}-18g_{B}-72}{9}, \\
-K_{X}.C&=\frac{9 (-K_{W}).B-18g_{B}-2(-K_{W})^{3}-36}{9} \text{ and } \\
z&=\frac{6(-K_{W}).B-9g_{B}-2(-K_{W})^{3}}{9}.
\end{align*}
Note that $(-K_{W})^{3}=54$ since $W \to \P^{1}$ is $\P^{2}$-bundle. 
Thus we have the last statement in Proposition \ref{prop:2}. \qedhere
\end{enumerate}
\end{proof}
\subsection{Case (A-2)}\label{ss:A2}
Let us construct an example belonging to Case (A-2). 

Let $C \sb \P^{2}$ be a smooth elliptic curve and set $S,W$ as follows: 
\[S:=C \times \P^{1} \sb W:=\P^{2} \times \P^{1}.\]
Set $H:=\mc O_{\P^{2} \times \P^{1}}(1,0)$ and $F:=\mc O_{\P^{2} \times \P^{1}}(0,1)$. Note that $-K_{W}=3H+2F$. 

Let $\pr_{1} \colon S \to C$ is the projection of the first factor and $\pr_{2} \colon S \to \P^{1}$ is that of the second factor. 
Let $B \in |\pr_{1}^{\ast}\mc O_{C}( 5p ) \otimes \pr_{2}^{\ast}\mc O_{\P^{1}}(2)|$ be a smooth divisor of $S$, where $p$ is a point of $C$. 
Then we have $g_{B}=6$ by adjunction formula and we have $F.B=5$, $H.B=6$ and $-K_{W}.B=28$. 
Let $\t \colon Z:=\Bl_{B}W \to W$ be the blowing-up of $W=\P^{2} \times \P^{1}$ along $B$ and $E$ the $\t$-exceptional divisor. 
Note that $S_{Z}:=\t^{-1}_{\ast}S$ is linearly equivalent to $\t^{\ast}(3H)-E$ and $\t|_{S_{Z}} \colon S_{Z} \to S$ is isomorphic. 
\begin{lem}
$-K_{Z}$ is nef and big. 
\end{lem}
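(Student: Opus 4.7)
The strategy is to exploit that $S = C \times \P^{1}$ lies in $|3H|$ and contains $B$, so that writing $-K_{W} = 3H + 2F$ yields a decomposition of $-K_{Z}$ in which nefness reduces to a computation on the surface $S_{Z}$.

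Since $B$ is a smooth divisor inside the smooth surface $S$, the restriction $\t|_{S_{Z}} \colon S_{Z} \to S$ is an isomorphism and $S_{Z} \sim \t^{\ast}(3H) - E$ in $\Pic(Z)$. Combined with $-K_{Z} = \t^{\ast}(-K_{W}) - E = \t^{\ast}(3H + 2F) - E$, this gives the key identity
\[
-K_{Z} \;=\; S_{Z} + 2\,\t^{\ast}F.
\]
For any irreducible curve $\g \sb Z$ not contained in $S_{Z}$, both $S_{Z}.\g \geq 0$ and $\t^{\ast}F.\g \geq 0$ (the latter because $F$ is nef on $W$), so $-K_{Z}.\g \geq 0$. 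For $\g \sb S_{Z}$ I would identify $S_{Z} \simeq S = C \times \P^{1}$ and let $\a, \b$ denote the classes of fibers of $\pr_{1} \colon S \to C$ and $\pr_{2} \colon S \to \P^{1}$ respectively. Then $H|_{S} \equiv 3\a$ (since $C \sb \P^{2}$ is a plane cubic), $F|_{S} \equiv \b$, and $B \equiv 5\a + 2\b$ by definition, so
\[
-K_{Z}|_{S_{Z}} \;\equiv\; 3(3\a) + 2\b - (5\a + 2\b) \;=\; 4\a,
\]
which is nef on $S$ (indeed base point free, since $\a$ is a fiber class). This settles the remaining case and shows that $-K_{Z}$ is nef.

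For bigness, since $-K_{Z}$ is already nef it suffices to check $(-K_{Z})^{3} > 0$. Expanding $(\t^{\ast}(-K_{W}) - E)^{3}$ and using $(-K_{W})^{3} = 54$, the projection formula $\t^{\ast}(-K_{W}) \cdot E^{2} = -((-K_{W}).B) = -28$, and $E^{3} = -\deg N_{B/W} = -((-K_{W}).B + 2g_{B} - 2) = -38$, one obtains $(-K_{Z})^{3} = 54 - 84 + 38 = 8 > 0$. The main point of the argument is simply spotting the decomposition $-K_{Z} = S_{Z} + 2\t^{\ast}F$; once it is in place, both nefness and bigness collapse to elementary numerical checks on $C \times \P^{1}$ and on $W$.
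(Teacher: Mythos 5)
Your proof is correct and takes essentially the same route as the paper: the key decomposition $-K_{Z}=S_{Z}+2\t^{\ast}F$ together with the computation $-K_{Z}|_{S_{Z}}\equiv 4\a=\pr_{1}^{\ast}\mc O_{C}(4\,\mathrm{pts})$ is exactly the paper's nefness argument. One remark: your value $(-K_{Z})^{3}=8$ is the correct one, while the paper states $(-K_{Z})^{3}=12$; the paper's figure is an arithmetic slip (it is inconsistent with $(-K_{Z})^{3}=(-K_{Y})^{3}=(-K_{X})^{3}-2(-K_{X}).C-2=8$ obtained from its own formulas), but since both values are positive this has no bearing on the bigness conclusion.
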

\begin{proof}
It is easy to find that $(-K_{Z})^{3}=12$. 
Since $-K_{Z}=\t^{\ast}(3H+2F)-E=S_{Z}+2F_{Z}$, it is enough to show that $-K_{Z}|_{S_{Z}}$ is nef. 
Under the identifying $S_{Z}$ with $S$, we have $-K_{Z}|_{S_{Z}}=\pr_{1}^{\ast}\mc O_{C}( \text{4pts.} )$ and hence $-K_{Z}$ is nef. 
\end{proof}
Therefore, the pair $(\pi \colon \P^{2} \times \P^{1} \to \P^{1},B)$ satisfying the condition $(\dag_{5})$ and 
we obtain the diagram $(\bigstar_{5})$ and a del Pezzo fibration $\vp \colon X \to \P^{1}$ of degree 5 satisfying the following:
\begin{align*}
(-K_{X})^{3}=10,\quad -K_{X}.C=0 \text{ and } D \equiv \frac{2}{3}(-K_{Y})-\frac{1}{3}E_{Y}+\frac{2}{3}F_{Y}.
\end{align*}
Since $-K_{Y}$ is nef and big and $-K_{X}.C=0$, $X$ is almost Fano. 
According to \cite{JPR05}, \cite{JPR11} and \cite{Take09}, there are two possibilities of a type of the contraction of the $K_{X}$-trivial ray: a divisorial type or a flopping type. 
\begin{lem}
The $K_{X}$-trivial elementary contraction of $X$ is a divisorial contraction. 
\end{lem}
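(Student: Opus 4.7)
The plan is to exhibit a one-parameter family of $K_X$-trivial curves on $X$ sweeping out a divisor; this forces the $K_X$-trivial extremal contraction of $X$ to be divisorial rather than a small (flopping) contraction.

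First, I would track the surface $S = C \times \P^1 \subset W = \P^2 \times \P^1$. Since $S \sim 3H$ contains $B$ as a smooth Cartier divisor, the proper transform $S_Z \subset Z$ satisfies $S_Z = \t^* S - E \sim 3\t^* H - E \sim -K_Z - 2F_Z$, and $\t|_{S_Z} \colon S_Z \to S$ is an isomorphism (blowing up a Cartier divisor in a smooth surface is trivial). For each $c \in C$, let $f_c \subset S_Z \simeq S$ denote the fiber $\{c\} \times \P^1$ of $\pr_1$. Directly on $S$ one computes $(-K_Z) \cdot f_c = 0$ (since $-K_Z|_{S_Z} = \pr_1^* \mc O_C(4\,\text{pts.})$), $F_Z \cdot f_c = 1$, and $E \cdot f_c = B \cdot_S f_c = 2$.

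Since $F_Z \cdot f_c = 1 \neq 0$, the curve $f_c$ is not $p$-vertical, whereas the flopping curves of $\Psi \colon Z \to \ol{Z}$ are; hence for generic $c$ the curve $f_c$ avoids the flopping locus of $\Phi$, and its proper transform $f_{c,Y} \subset S_Y$ is an irreducible curve carrying the same intersection numbers against $-K_Y$, $F_Y$, $E_Y$. Substituting into the formula $D \equiv \tfrac{2}{3}(-K_Y) - \tfrac{1}{3}E_Y + \tfrac{2}{3}F_Y$ from Proposition \ref{prop:2} yields
\[D \cdot f_{c,Y} \;=\; \tfrac{2}{3}(0) - \tfrac{1}{3}(2) + \tfrac{2}{3}(1) \;=\; 0.\]
Since $\{f_{c,Y}\}$ sweeps $S_Y$, while $[S_Y] = -K_Y - 2F_Y$ and $[D]$ differ in $\Pic(Y)$ (the $E_Y$-coefficient distinguishes them), generic $f_{c,Y}$ is not contained in $D$; combined with $D \cdot f_{c,Y} = 0$, this forces $f_{c,Y} \cap D = \emptyset$. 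Therefore $\mu|_{f_{c,Y}}$ is an isomorphism onto a curve $f_{c,X} \subset X$, and by the projection formula $(-K_X) \cdot f_{c,X} = (-K_Y + D) \cdot f_{c,Y} = 0$.

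As $c$ ranges over $C$, the curves $\{f_{c,X}\}$ form a one-parameter family of $K_X$-trivial curves sweeping out the divisor $T := \mu_* S_Y$, whose class in $\Pic(X) = \Z \cdot [-K_X] \oplus \Z \cdot [F]$ equals $[-K_X - 2F]$ (computed from $\mu^* T = S_Y + D$, in agreement with Table \ref{table:A}). Since $F \cdot f_{c,X} = 1 \neq 0$, the class $[f_{c,X}]$ generates an extremal ray of the two-dimensional cone $\ol{NE}(X)$ distinct from the $\vp$-fiber ray, and the Mori contraction of this $K_X$-trivial ray contracts the divisor $T$, so it is divisorial. The most delicate step is verifying that the family $\{f_c\}$ transfers through the flop $\Phi$ as an algebraic family on $Y$; this reduces to the horizontal/vertical dichotomy with respect to $p$ that I used above.
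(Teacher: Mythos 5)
Your overall strategy coincides with the paper's: use the ruling of $S_Y\simeq S=C\times\P^1$ over $C$ to exhibit a one-parameter family of $K_X$-trivial curves, which is incompatible with a small (flopping) contraction. All of your intersection numbers ($-K_Z.f_c=0$, $F_Z.f_c=1$, $E.f_c=2$, hence $D.f_{c,Y}=0$) agree with the paper, which packages the last computation as $D|_{S_Y}=\pr_1^{\ast}\mc O_C(q)$, and your final extremal-ray argument is just a restatement of ``infinitely many contracted curves implies not small.''

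The one step that does not hold up as written is the transfer of the family through the flop, which you yourself flag as the delicate point. From ``the flopping curves are $p$-vertical and $f_c$ is $p$-horizontal'' you may conclude that no $f_c$ is itself a flopping curve, but \emph{not} that a generic $f_c$ is disjoint from the flopping locus: a horizontal curve can perfectly well meet a vertical one. The dangerous case is a flopping curve $l$ contained in $S_Z$; being $p$-vertical it would lie in a fiber of $p|_{S_Z}$, i.e.\ in some $C\times\{t\}$, hence equal $C\times\{t\}$ by irreducibility of that fiber, and then \emph{every} $f_c$ would meet it at the point $(c,t)$, defeating your genericity claim. This case must be excluded, and it is exactly what the paper does first: for a flopping curve $l$ one has $0=-K_Z.l=(S_Z+2F_Z).l$ with $F_Z.l=0$, so $S_Z.l=0$, and since every fiber of $p|_{S_Z}$ is an elliptic curve (whereas the exceptional curves of a flopping contraction of a smooth threefold are rational), $S_Z$ contains no flopping curve; consequently $\Phi|_{S_Z}\colon S_Z\dra S_Y$ is an isomorphism and the whole family transfers, not just a generic member. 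Once this is inserted, $\Exc(\Psi)\cap S_Z$ is finite and the rest of your argument (the projection formula via $\mu^{\ast}(-K_X)=-K_Y+D$ and the conclusion) is correct and matches the paper.
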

\begin{proof}
For $\Phi$-flopping curve $l \sb Z$, we have $0=-K_{Z}.l=(S_{Z}+2F_{Z}).l=S_{Z}.l$. 
Since every fiber of $p|_{S_{Z}} \colon S_{Z} \to \P^{1}$ is an elliptic curve, $S_{Z}$ has no $\Phi$-flopping curve. Therefore, $\Phi|_{S_{Z}} \colon S_{Z} \dra S_{Y}$ is isomorphic. 

Let $\G$ be a fiber of $S = C\times \P^{1} \to C$. Note that $-K_{Y}.\G=0$ and 
$D|_{S_{Y}}=\pr_{1}^{\ast}\mc O_{C}(q)$ for a point $q$ of $C$. Hence we have $\mu^{\ast}(-K_{X}).\G=0$. 
Therefore, $X$ has infinitely many $K_{X}$-trivial curves. 
\end{proof}
Hence $X$ belongs to Case (A-2). 
By using Lemma \ref{lem:3} and \ref{lem:4}, we have $h^{1,2}(X)=6$. \bbox
\bibliographystyle{abbrv}

\end{document}